\documentclass[final,onefignum,onetabnum, onethmnum]{siamart220329}

\usepackage{amsmath}
\usepackage{amssymb}
\usepackage{amsfonts}
\usepackage{algorithmic}
\usepackage{graphicx}
\usepackage{textcomp}
\usepackage{xcolor}
\usepackage{bm}
\usepackage{tikz}
\usepackage{tikzscale}
\usetikzlibrary{angles, quotes, intersections}
\usetikzlibrary{arrows,shapes,er,automata,backgrounds,topaths,trees}
\usepackage{physics}
\usepackage{centernot}
\usepackage{enumerate}

\newcommand*{\CopyCounter}[2]{%
  \expandafter\def\csname c@#2\endcsname{\csname c@#1\endcsname}%
  \expandafter\def\csname p@#2\endcsname{\csname p@#1\endcsname}%
  \expandafter\def\csname the#2\endcsname{\csname the#1\endcsname}}
%
%
%

\newcommand{\R}{\mathbb{R}}
\renewcommand{\P}{\mathbb{P}}
\newcommand{\E}{\mathbb{E}}

\newcommand{\J}{\mathcal{J}}
\newcommand{\F}{\mathcal{F}}
\newcommand{\Scal}{\mathcal{S}}
\newcommand{\M}{\mathcal{S}}
\newcommand{\T}{\mathcal{T}}
\newcommand{\U}{\bm{U}}
\newcommand{\V}{\bm{V}}
\newcommand{\Vf}{\mathcal{V}}
\newcommand{\W}{\bm{W}}
\newcommand{\I}{\mathcal{I}}
\newcommand{\N}{\mathcal{N}}
\newcommand{\ba}{\bm{a}}
\newcommand{\e}{\bm{e}}
\newcommand{\bt}{\bm{t}}
\newcommand{\bv}{\bm{v}}
\newcommand{\bw}{\bm{w}}
\newcommand{\bn}{\bm{n}}
\newcommand{\nhat}{\hat{\bn}}
\newcommand{\x}{\bm{x}}
\newcommand{\xtilde}{\tilde{\x}}
\newcommand{\y}{\bm{y}}
\newcommand{\z}{\bm{z}}
\newcommand{\bq}{\bm{q}}
\newcommand{\bal}{\bm{a}_{\lambda}^{}}
\newcommand{\bxi}{\bm{\xi}}
\newcommand{\bphi}{\bm{\phi}}

\newcommand{\bgamma}{\bm{\gamma}}
\newcommand{\boldeta}{\bm{\eta}}
\newcommand{\s}{\bm{s}}

\newcommand{\m}{s}
\newcommand{\domain}{\Omega}
\newcommand{\cdomain}{\bar{\Omega}}
\newcommand{\boundary}{\partial \domain}

\DeclareMathOperator{\interior}{int}

\newif\ifnever\neverfalse

\makeatletter
\def\munderbar#1{\underline{\sbox\tw@{$#1$}\dp\tw@\z@\box\tw@}}
\makeatother

\usepackage{natbib}
 %
 %
 %
 %
 %
 \bibpunct[, ]{[}{]}{,}{n}{}{,}%


\usepackage{blindtext}

\DeclareMathOperator*{\argmin}{argmin}

\newcommand{\Halmos}{\hfill \ensuremath{\Box}}

\renewenvironment{equation*}{\[}{\]\ignorespacesafterend}

\title{Monotone Causality in Opportunistically Stochastic Shortest Path Problems}
\author{Mallory E. Gaspard\thanks{meg375@cornell.edu, Center for Applied Mathematics, Cornell University, Ithaca, NY 14853}
\and Alexander Vladimirsky\thanks{vladimirsky@cornell.edu, Department of Mathematics, Cornell University, Ithaca, NY 14853}}

\headers{Monotone Causality in OSSP Problems}{M.E.~Gaspard \& A.~Vladimirsky}

\begin{document}

\maketitle

\begin{abstract}
When traveling through a graph with an accessible deterministic path to a target,
is it ever preferable to resort to stochastic node-to-node transitions instead?
And if so, what are the conditions guaranteeing that such a stochastic optimal routing policy 
can be computed efficiently?
We aim to answer these questions here by defining a class of Opportunistically
Stochastic Shortest Path (OSSP) problems and deriving sufficient conditions for applicability of
non-iterative label-setting methods.
The usefulness of this framework is demonstrated in two very different contexts:
numerical analysis and autonomous vehicle routing.
We use OSSPs to derive causality conditions for semi-Lagrangian discretizations of anisotropic Hamilton-Jacobi equations.
We also use a Dijkstra-like method to solve OSSPs optimizing the timing and urgency of lane change maneuvers
for an autonomous vehicle navigating road networks with a heterogeneous traffic load. 
\end{abstract}%

\begin{keywords}
stochastic shortest path; dynamic programming; label-setting; Dijkstra’s method; Dial’s method; optimal control; Hamilton-Jacobi; 
fast marching; routing of autonomous vehicles
\end{keywords}
\begin{AMS}
90C39, 90C40, 49L20, 65N22, 49L25
\end{AMS}


%


\section{INTRODUCTION}
Sequential decision making under uncertainty is ubiquitous in science and engineering.
If the duration of the process is not known in advance and the termination results from entering a specific part of state space, 
this is often modeled as a Stochastic Shortest Path (SSP) problem \cite{bertsekas1991analysis}.
SSPs arise naturally in diverse applications including aircraft routing \cite{hong2021anytime} and optimal scheduling in wireless sensor networks \cite{chen2007transmission}.   
Unfortunately, solving most SSPs is computationally expensive and is usually accomplished iteratively.  
In this work, we introduce a subclass of SSPs called \emph{Opportunistically Stochastic Shortest Path} (OSSP) problems, in which 
one can always use deterministic transitions at each stage, but opting for stochastic transitions can often yield a lower cumulative cost.
We derive a number of sufficient conditions on the OSSP's transition cost function to guarantee the applicability of non-iterative label-setting methods. 
Our conditions cover problems with finite and infinite action spaces, make no assumptions about the smoothness of transition cost functions,
and are naturally satisfied by many OSSPs arising in very different applications (e.g., autonomous vehicle routing models and discretizations of
partial differential equations in optimal control theory).

The state space of an SSP is usually identified with nodes of a finite graph, with a separate set of actions available at each of them.  Each chosen action incurs an instantaneous cost and yields a probability distribution over possible successor nodes.  The goal is to choose actions that minimize the expected total cost accumulated by the time we reach the pre-specified target.  The {\em value function} is defined by minimizing the expected cumulative cost starting from each of the $n$
nodes, and it can be recovered by solving a system of $n$ coupled non-linear optimality equations.  But unlike in the case of fixed-horizon Markov Decision Processes, for general SSPs the direction of information flow is not a priori obvious (i.e., the dependence of the nodes/equations usually does not have an obvious tree-like structure).  As a result, iterative methods are generally unavoidable in computing the SSP value function.
This is expensive even in ``causal'' SSPs, for which the convergence is
obtained in finitely many iterations: 
even if we assume that the optimal action at any single node can be found in $O(1)$ operations once the value function is already known at all possible successor nodes,
the iterative process yields the overall computational cost of $O(n^2).$
In deterministic shortest path problems with non-negative transition costs and
bounded node outdegrees, such iterations are avoided by using more efficient {\em label-setting} algorithms: most notably, Dijkstra's \cite{dijkstra1959note} and Dial's \cite{dial1969algorithm}, achieving the computational cost of $O(n \log n)$ and $O(n)$ respectively.  It is thus natural to consider whether these non-iterative methods are applicable to any broad class of SSPs as well.  The previous attempts to answer this question yielded either implicit conditions that could be only verified a posteriori \cite{Bertsekas_DPbook} or a priori verifiable  sufficient conditions that were fairly restrictive (e.g., relying on a very rich set of available actions and smoothness of the cost function) \cite{vladimirsky2008label}.   The OSSPs studied in this paper significantly broaden this class, with simple explicit criteria for ``monotone causality'' (the property that ensures the applicability of label-setting)  and far fewer assumptions on the problem data.

We begin \S \ref{section:ssps_and_ossps} with a quick introduction to general SSPs (and the ``pruning process'', which allows reducing their action sets without altering the value function)  followed by a formal definition of  OSSPs. For the latter, the sufficient conditions for applicability of Dijkstra's and Dial's methods are then developed in \S \ref{section:label_setting_and_mc}.
We then proceed to demonstrate the usefulness of OSSPs in analyzing the computational (causal) properties of semi-Lagrangian discretizations of stationary Hamilton-Jacobi-Bellman (HJB) partial differential equations (PDEs) with different types of anisotropy. The fact that many semi-Lagrangian PDE discretizations can be re-interpreted as SSPs is well-known \cite{KushnerDupuis}.  But our analysis in \S \ref{section:ossp_and_ahj} yields
(a) a simplification and geometric interpretation of prior disparate criteria for applicability of Dijkstra's method and 
(b) new sufficient conditions for the applicability of Dial's method to HJB discretizations.

OSSPs  also provide a particularly attractive modeling framework for discrete problems in which the key choice is between maintaining the status quo or pushing (with some degree of urgency) to enact a specific change.  This corresponds to OSSPs in which every action yields at most two possible successor nodes.
One very natural example of this is a
decision on whether (and when) to change lanes while driving.
In \S \ref{section:lane_change_formulation}, we show how OSSPs can be used in autonomous vehicle routing problems\footnote{
Since this paper is meant to be of interest to multiple research communities, we note that readers primarily interested in AV applications 
can safely skip the second half of \S \ref{ss:OSSP_MC} and the entire \S \ref{section:ossp_and_ahj}.}. 
The standard driving directions provide turn-by-turn instructions for reaching the target.
But our OSSP model also answers finer-scale tactical questions based on heterogeneous traffic conditions: where, when, and with what urgency should a car try to change lanes along the way? 
And what should be done if an attempted lane change fails?   
The success of lane change maneuvers is inherently uncertain, and failure to take this into account during the planning process may result in sub-optimal performance or even an unsafe driving experience.
Our work in that section provides an alternative to a model 
recently introduced by  Jones, Haas-Heger, and van den Berg \cite{jones2022lane}. 
That earlier paper has showed the applicability of Dijkstra's method to a specific SSP-based routing model that allowed ``tentative'' and ``forced'' lane change maneuvers.  
The modeling framework in \S \ref{section:lane_change_formulation} goes much farther, allowing for  
a continuous spectrum of lane change urgency and a variety of cost functions treatable by label-setting methods.

We conclude by discussing the current limitations of the OSSP framework and possible directions for future work in \S \ref{section:conclusions}. 
\section{SSPs and OSSPs}\label{section:ssps_and_ossps}
\subsection{General Stochastic Shortest Path Problems}
Let $X = \{\x_1, \dots, \x_n, \x_{n+1} = \bt\}$ be the set of all states (nodes), where the last one encodes the target (destination). 
A sequence $(\y_k)_{k=0,1,...}$ reflects a possible stochastic path,
where
each $\y_k \in X$ denotes the state of the process at the $k$-th stage (i.e., after $k$ transitions).
We use $A(\x_i)$ to denote the set of actions available at a state $\x_i \in X$ and $A = \bigcup_{\x \in X}A(\x)$ to denote the set of all actions.
Choosing any action $\ba \in A(\x_i)$ incurs a known cost $C(\x_i, \ba)$ and yields a known probability distribution $p(\x_i, \ba, \x_j) = p_{ij}(\ba)$
over the possible successor nodes for the next transition; i.e., if an action $\ba \in A(\y_k)$ is chosen at the $k$-th stage of the process, then
$\P(\y_{k+1} = \x_j) = p(\y_k, \ba, \x_j)$ for all $\x_j \in X.$  

The target node is assumed to be {\em absorbing} and 
no additional cost is incurred after
we reach it; i.e., $p(\bt, \ba, \bt) = 1$ and $C(\bt, \ba) = 0$ for all $\ba \in A(\bt)$. 
The decision maker's goal is to choose actions to minimize the expected cumulative cost up to the target.
This is done by optimizing over the set of control policies.  A function $\mu: X \to  A$ is a {\em control mapping} if $\mu(\x) \in A(\x)$ for all $\x \in X.$
A {\em control policy} is an infinite sequence of control mappings $\pi = (\mu_k)_{k=0,1,...}$ to be used at respective stages of the process.
The expected cost of using a policy $\pi$ starting from any node $\y_0 = \x$ is defined as
\begin{equation}\label{eqn:value_of_policy_ssp}
\J(\x, \pi) = \E \left[ \sum\limits^{\infty}_{k = 0}C \left(\y_k, \mu_k(\y_k) \right) \, \mid \, \y_0 = \x \right],
\end{equation}
and the value function $U(\x)$ denotes the result of minimizing 
the expected cost starting from $\x$:
\begin{equation}\label{eqn:value_fn_ssp}
U(\x) = \inf\limits_\pi \J(\x, \pi).
\end{equation}
With a slight abuse of notation, we also use the symbol $\mu$ to refer to a {\em stationary} policy $(\mu, \mu, \dots )$
that uses the same control mapping $\mu$ at each stage of the process.

 The theory of SSPs was developed to describe a broad class of models, including those with infinite and even continuous action spaces.  
As shown in \cite{bertsekas1991analysis}, the following
five assumptions\footnote{
If the set of available actions is finite, the existence of an optimal  stationary policy can be demonstrated without these additional assumptions \cite{feinberg1992markov}.
}
 guarantee that the infimum in \eqref{eqn:value_fn_ssp} is actually attained and that 
there exists an optimal stationary policy $\mu^*$ which attains this minimum for every starting state $\x \in X$:
\begin{enumerate}[\bfseries ({A}1)]
	\item $A(\x_i)$ is compact for all $\x_i \in X.$\label{item:axi_compact}
	\item $C(\x_i, \ba)$ is lower semicontinuous over $A(\x_i)$  for all $\x_i \in X.$\label{item:c_lsc}
	\item $p_{ij}(\ba)$ is continuous over $A(\x_i)$  for all $\x_i$ and $\x_j$ in $X$.
	\item There exists at least one policy which results in arriving at the target with probability $1$ from every starting state. 
	We refer to policies of this type as \emph{proper} and all others as {\em improper}. 
	\item Every improper policy yields an infinite expected cost starting from at least one node.\label{item:improper_policy_guarantee}
\end{enumerate}

\vspace*{2mm}
\noindent
Throughout the paper, we will also impose a stronger cost-positivity assumption 
\begin{enumerate}[(\bfseries {A}5')]
	\item \label{item:cost_positivity}
$C(\x, \ba) > 0, \qquad\qquad \forall \x \in X \setminus \{\bt\}, \, \ba \in A(\x).$ 
\end{enumerate}
This is a necessary condition for the applicability of label-setting methods, 
which will be discussed in detail in \S \ref{section:label_setting_and_mc}.
We note that (\textbf{A\ref{item:axi_compact}}) and (\textbf{A\ref{item:c_lsc}}) make (\textbf{A\ref{item:improper_policy_guarantee}}) a trivial
consequence of
(\textbf{A5'}).
If $C$ is also bounded from above, the existence of a proper stationary policy implies that $U(\x)$ is finite $\forall \x \in X.$ 

For simplicity of exposition, 
we will further assume the lack of self-transitions on $X \setminus \{\bt\}$; i.e., 
\begin{enumerate}[\bfseries ({A}6)]
\item
$p(\x, \ba, \x) = 0, \qquad \qquad \forall \x \in X \setminus \{\bt\}, \ba \in A(\x).$
\end{enumerate}
This is not really restrictive since self-transitions can be removed without affecting the value function.
Indeed, whenever $p_{ii}(\ba) > 0$ for $\x_i \neq \bt$, one can obtain an SSP with the same value function by switching to
$$
\tilde{C}(\x_i, \ba) \, = \, \frac{C(\x_i, \ba)}{1 - p_{ii}(\ba)};
\qquad \text{and} \qquad
\tilde{p}_{ij}(\ba) \, = \,
\begin{cases}
0 & \text{if } j = i,\\
\frac{p_{ij}(\ba)}{1 - p_{ii}(\ba)}
& \text{if } j \neq i;
\end{cases}
\qquad
\text{for } 
j=1,\cdots,n+1.
$$
Replacing $p(\ba)$ by $\tilde{p}(\ba)$ is an example of ``oblique  projections,'' in which one of the possible successor nodes is excluded
and its probability is redistributed among other successor nodes proportionally.  We will later employ a similar approach in proving causal 
properties of SSP problems.

For notational ease, denote $U(\x_i)$ as $U_i$ and $A(\x_i)$ as $A_i$. 
The value function can be computed by solving a coupled system of dynamic programming equations
\begin{align}\label{eqn:value_fn_vi}
\nonumber
U_i & = \min_{\ba \in A_i} \left\{C(\x_i, \ba) + \sum\limits^{n+1}_{j = 1}p_{ij}(\ba)U_j \right\}, & \text{ for } i = 1, \ldots n;\\
U_{n+1} & =
0,
\end{align}
and the optimal control mapping $\mu^*$ 
can be obtained by using any action from the $\argmin$ in  \eqref{eqn:value_fn_vi}.
In vector notation, 
$\U = \begin{bmatrix}
U_1, & \dots ,& U_n 
\end{bmatrix}^T$ 
is a fixed point of the 
operator $\T:\R^n \to \R^n$ defined componentwise as
\begin{equation} \label{eq:T_and_F}
(\T\W)_i = \min_{\ba \in A_i} \F_i(\ba, \W),
\quad \text{ where }
\F_i(\ba, \W) = C(\x_i, \ba) + \sum\limits^{n}_{j = 1}p_{ij}(\ba)W_j.
\end{equation}
This $\F_i(\ba, \W)$
represents the expected cost-to-go from $\x_i$ when using $\ba \in A_i$ for the first transition and under the assumption that the remaining expected cost-to-go
is encoded in $\W \in \R^n.$ 

It is often useful to consider ``stochastic connectivity'' of various nodes in an SSP.
Focusing on any specific
$\x_i$,  we define the set of potential successor nodes $\I(\ba) = \{\x_j \in X \hspace{0.2cm} | \hspace{0.2cm} p_{ij}(\ba) > 0\}$ 
for each action $\ba \in A_i$. The set of \emph{all} possible successor nodes from $\x_i$ is then defined as $\N(\x_i) =  \cup_{\ba \in A_i} \I(\ba) =  \{\x_j \in X \hspace{0.2cm} | \hspace{0.2cm} \exists \ba \in A_i \text{ s.t. } p_{ij}(\ba) > 0\}$. 
The SSP's \emph{transition digraph} $G$ can then be defined as a network representation of all possible transitions to successor nodes from each $\x_i \in X$ under all available actions in $A_i$. 
We provide two simple examples in Figure \ref{fig:ssp_examples}, with each possible  $\x_i \rightarrow \x_j$ transition indicated by a dashed arrow.

Since this perspective aggregates transitions possible under {\em all} available actions, a transition digraph captures only the global (topological) structure of the SSP, which is typically insufficient to infer the optimal policy.
But the practicality of a computational method based on fixed-point iterations to determine $\U$ depends heavily on whether $G$ is acyclic (as in Figure \ref{fig:ssp_examples}(a)).  Even for large acyclic SSPs, it is easy to compute $\U$ efficiently via an iterative process.  But if $G$ contains cycles (as in 
Figure \ref{fig:ssp_examples}(b)), the number of iterations can be prohibitive, particularly when $n$ is large.
In such cases, faster non-iterative algorithms described in \S \ref{section:label_setting_and_mc} provide an attractive alternative, whenever they are applicable.
Unfortunately, their applicability to each specific SSP problem can be hard to verify a priori.
In this paper, we show how this can be done by checking simple, explicit conditions on $C(\x, \ba)$ for a broad subclass of SSPs defined in \S \ref{ss:ossp}.


\begin{figure}[hhhh]
\center{
$
\begin{array}{cc}
\begin{tikzpicture}[->,>=stealth',shorten >=1pt,auto,node distance=3cm, scale=2,
                    semithick]
  \tikzstyle{every state}=[draw, shape=circle, inner sep=0mm, minimum size = 6mm]

  \node[state]         (T)	at (1.65,0)					{$\bt$};
    \node[state]         (x1)	at (-1.55,0.6)			{$\x_1$};
   \node[state]         (x2)	at (-1.55, -0.6)			{$\x_2$};
  \node[state]         (x3)	at (-0.5, 1)				{$\x_3$};
    \node[state]         (x4)	at (-0.5, 0.4)			{$\x_4$};
  \node[state]         (x5)	at (-0.5, -0.4)			{$\x_5$};
  \node[state]         (x6)	at (-0.5, -1)			{$\x_6$};
  \node[state]         (x7)	at (0.625, 0.5)			{$\x_7$};
  \node[state]         (x8)	at (0.625, 0)			{$\x_8$};
\node[state]         (x9)	at (0.625, -0.5)			{$\x_9$};

  \path (x1)	edge	[dashed]									(x3);
    \path (x1)	edge	[dashed]									(x4);
        \path (x1)	edge	[dashed]									(x5);
  \path (x2)	edge	[dashed]					node {}				(x5);
    \path (x2)	edge	[dashed]					node {}				(x6);
  \path (x2)	edge	[dashed]					node {}				(x4);
  \path (x3)	edge	[dashed]									(x7);
    \path (x3)	edge	[dashed]									(x8);
   \path (x4)	edge	[dashed]									(x9);
    \path (x4)	edge	[dashed]									(x7);
    \path (x7)	edge	[dashed]									(T);
      \path (x6)	edge	[dashed]									(x9);
      \path (x6)	edge	[dashed]									(x8);
     \path (x9)	edge	[dashed]									(T);
      \path (x5)	edge	[dashed]									(x7);
       \path (x5)	edge	[dashed]									(x9);
             \path (x8)	edge	[dashed]									(T);
  
\end{tikzpicture}
\hspace*{2cm}
&
\begin{tikzpicture}[->,>=stealth',shorten >=1pt,auto,node distance=3cm, scale=2,
                    semithick]
  \tikzstyle{every state}=[draw, shape=circle, inner sep=0mm, minimum size = 6mm]

  \node[state]         (x6)	at (0,0.13)					{$\x_6$};
  \node[state]         (x3)	at (1,0.13)					{$\x_3$};
  \node[state]         (x2)	at (0.5, 1.1)				{$\x_2$};
  \node[state]         (x1)	at (-0.5, 1.1)			{$\x_1$};
  \node[state]         (x5)	at (-1,0.13)					{$\x_5$};
  \node[state]         (x4)	at (0, -0.866)			{$\x_4$};
  
    \node[state]         (x7)	at (1.75, 1.1)			{$\x_7$};
        \node[state]         (x8)	at (1.75, 0.13)			{$\x_8$};
            \node[state]         (x9)	at (1.75, -0.866)			{$\x_9$};
  
  \node[state]         (T)	at (2.5, 0.13)			{$\bt$};

  \path (x1)	edge	[bend right=10, dashed]		node {}				(x2)
				edge	[bend right=10, dashed]		node {}				(x5);
				
  \path (x2)	edge	[bend right=10, dashed]		node {}				(x1)
				edge	[bend right=10, dashed]		node {}				(x3);
				
  \path (x3)	edge	[bend right=10, dashed]		node {}				(x4)
				edge	[bend right=10, dashed]		node {}				(x2);
  \path (x4)	edge	[bend right=10, dashed]		node {}				(x5)
				edge	[bend right=10, dashed]		node {}				(x3);
  \path (x5)	edge	[bend right=10, dashed]		node {}				(x1)
				edge	[bend right=10, dashed]		node {}				(x4);
       \path (x6)	edge	[dashed]									(x1);
              \path (x6)	edge	[dashed]									(x2);
                     \path (x6)	edge	[dashed]									(x3);
                            \path (x6)	edge	[dashed]									(x4);
                                   \path (x6)	edge	[dashed]									(x5);
 
  \path (x3)	edge	[dashed]									(x8);
    \path (x8)	edge	[dashed]									(T);
        \path (x7)	edge	[dashed]									(T);
            \path (x9)	edge	[dashed]									(T);
    
     \path (x7)	edge	[bend right=10, dashed]		node {}				(x8);
          \path (x8)	edge	[bend right=10, dashed]		node {}				(x7);
               \path (x9)	edge	[bend right=10, dashed]		node {}				(x8);
          \path (x8)	edge	[bend right=10, dashed]		node {}				(x9);
            \path (x4)	edge	[bend right=10, dashed]		node {}				(x9);
            \path (x9)	edge	[bend right=10, dashed]		node {}				(x4);
             \path (x2)	edge	[bend right=10, dashed]		node {}				(x7);
            \path (x7)	edge	[bend right=10, dashed]		node {}				(x2);

\end{tikzpicture}  \\
\mbox{\footnotesize (a)} & \mbox{\footnotesize(b)} 
\end{array}
$
}
\caption{
{\footnotesize
Transition digraphs for two simple SSP examples. The target node is marked by $\bt$, and the possibility of a transition from node $\x_i$ to a successor node $\x_j$ is indicated by a dashed arrow. 
}
}
\label{fig:ssp_examples}
\end{figure}

\subsection{Randomized Policies, Dominated Actions, and Convexified Cost}
The above definition of control policies selects state-dependent actions deterministically.
It is well known that extending SSPs to {\em randomized control policies} (which select actions probabilistically) does not lower the value function.  
Nevertheless, 
this generalization is very useful in 
proving the properties of OSSPs in \S \ref{section:label_setting_and_mc}. 
In addition, it can be also used to justify a reduction of the action space in general SSP problems.

In this section, we will refer to all actions available in the original SSP as ``pure'' to distinguish them from the ``relaxed''/mixed actions 
allowed here.
Suppose the set of pure actions $A_i$ is finite and $\lambda = (\lambda_1, \ldots, \lambda_{|A_i|})$ is some probability distribution over it.
Preparing for a transition from $\y_k = \x_i,$
one can implement a mixed (or {\em relaxed}) action $\bal$ by selecting each $\ba_r \in A_i$ with probability $\lambda_r.$
This requires extending the definition in \eqref{eq:T_and_F} to use
$$
\F_i(\bal, \W) = \sum\limits^{|A_i|}_{r = 1} \lambda_r \F_i(\ba_r, \W)
$$
and minimizing  over all such $\lambda$'s when computing $(\T\W)_i$.  
Of course, this also includes all pure actions since $\lambda$ might be prescribing a single $\ba_r \in A_i$ with probability one.
We will refer to the problem based on such relaxed actions as rSSP.
But it's easy to see that the value function of this rSSP is actually the same as in the original SSP since 
\begin{equation} 
\label{eq:best_pure_action}
\ba_* \in \argmin_{\ba \in A_i} \F_i(\ba, \W)
\qquad \implies \qquad
\F_i(\ba_*, \W) \leq \F_i(\bal, \W) \qquad
\end{equation}
for all probability distributions $\lambda$.
Thus, there exists an optimal stationary policy for rSSP that uses only pure actions.
 (The same conclusions also hold for a general compact $A_i$, except that $\lambda$ would need to be a probability measure.) 

We will say that two pure actions $\ba, \tilde{\ba} \in A_i$ are {\em transition-equivalent} if  $p_{ij}(\ba) = p_{ij}(\tilde{\ba}).$
Clearly, if they are transition-equivalent and $C(\x_i, \ba) \geq C(\x_i, \tilde{\ba}),$ then $\ba$ can be removed from $A_i$ without affecting the value function. 
This pruning idea can also be extended to use relaxed actions.

\begin{definition}[Transition-equivalent Relaxed Actions]\label{defn:trans_equiv}
Given a set of pure actions $\{\ba_1, ... , \ba_l\} \subset A_i$
and a probability distribution over them $\lambda = (\lambda_1, \ldots, \lambda_l),$ 
we say that the corresponding relaxed action $\bal$ is {\em transition-equivalent} to 
a pure action $\ba \in A_i$ if
\begin{equation} \label{eq:dominated_match_distrib}
p_{ij}(\ba) \; = \; p_{ij}(\bal) \; = \; \sum\limits^{l}_{r = 1} \lambda_r p_{ij}(\ba_r), \qquad \forall \x_j \in X.
\end{equation}
We will also use $\Lambda(\ba)$ to denote the set of all relaxed actions that are transition-equivalent to $\ba$.
\end{definition}

\begin{definition}[Dominated, Replaceable, and Useful Actions]\label{defn:dom_point}
An action $\ba \in A_i$ is {\em probabilistically dominated} if there exists 
a transition-equivalent relaxed action $\bal \in \Lambda(\ba)$ such that
the resulting expected cost of the first transition is lower with $\bal$; i.e.,
\begin{equation} \label{eq:dominated_improve_cost}
C(\x_i, \ba) \; >  \; C(\x_i, \bal) \, = \, \sum\limits^{l}_{r = 1} \lambda_r C(\x_i, \ba_r).
\end{equation}

We will say that an action $\ba \in A_i$ is {\em replaceable} if there exists 
a set of actions $\{\ba_1, ... , \ba_l\} \subset A_i$
and a probability distribution over them $\lambda = (\lambda_1, \ldots, \lambda_l)$ such that
\begin{itemize}
\item none of the $\ba_r$'s are individually transition-equivalent to $\ba$ but
$\bal \in \Lambda(\ba);$ 
\item the expected transition cost is the same; i.e., $C(\x_i, \ba) \; =  \; C(\x_i, \bal) \, = \, \sum\limits^{l}_{r = 1} \lambda_r C(\x_i, \ba_r).$
\end{itemize}

We will say that a pure action $\ba \in A_i$ is (potentially) {\em useful} in an SSP if it is not probabilistically dominated or replaceable,
with $A_i^u$ denoting the set of all such actions.
In view of \eqref{eq:best_pure_action}, whenever $\ba \in A_i$ is probabilistically dominated or replaceable, there will always exist another 
 $\tilde{\ba} \in A_i$ which is at least as good of a choice at $\x_i$.  
\end{definition}
\begin{observation}\label{obs:ssp_useless_non_optimum}  $\quad$
	Suppose $\ba \in A_i$ is not useful.  Then, for every $\W \in \R^n$, there exists another action $\tilde{\ba} \in A_i \backslash \{\ba\}$ such that
	$\F_i(\ba, \W) \geq \F_i(\tilde{\ba}, \W).$ So, removing this $\ba$ from $A_i$ will not affect $U_i$ regardless of the $U_j$ values 
        at any potential successor nodes. The SSP based on $A_i^u$'s instead of $A_i$'s will have exactly the same value function.
\end{observation}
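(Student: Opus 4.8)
The plan is to unpack the hypothesis that $\ba$ is \emph{not useful} via Definition~\ref{defn:dom_point} into its two exhaustive cases --- $\ba$ is probabilistically dominated, or $\ba$ is replaceable --- and in each case produce the required $\tilde\ba$ by a single convexity estimate. The two consequences (removing $\ba$ does not affect $U_i$, and the $A_i^u$-reduced SSP has the same value function) then follow by inspecting the definition of $\T$ in \eqref{eq:T_and_F}.

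First I would fix an arbitrary $\W \in \R^n$. Since $\ba$ is not useful, either it is probabilistically dominated or it is replaceable; in both situations there is a relaxed action $\bal \in \Lambda(\ba)$ built from pure actions $\ba_1, \dots, \ba_l \in A_i$ with weights $\lambda = (\lambda_1, \dots, \lambda_l)$, and by the transition-equivalence identity \eqref{eq:dominated_match_distrib} we have $p_{ij}(\ba) = \sum_r \lambda_r p_{ij}(\ba_r)$ for every $\x_j$, so the $\W$-dependent parts of $\F_i(\ba, \W)$ and of $\sum_r \lambda_r \F_i(\ba_r, \W)$ agree and these two quantities differ only in their cost terms. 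If $\ba$ is dominated, \eqref{eq:dominated_improve_cost} gives $C(\x_i, \ba) > \sum_r \lambda_r C(\x_i, \ba_r)$, hence
\[
\F_i(\ba, \W) \;>\; \sum_{r=1}^{l} \lambda_r \F_i(\ba_r, \W) \;\geq\; \min_{1 \leq r \leq l} \F_i(\ba_r, \W) \;=\; \F_i(\ba_{r^*}, \W),
\]
and the strict inequality forces $\ba_{r^*} \neq \ba$; take $\tilde\ba = \ba_{r^*}$. If $\ba$ is replaceable, the cost terms match exactly, so $\F_i(\ba, \W) = \sum_r \lambda_r \F_i(\ba_r, \W) \geq \F_i(\ba_{r^*}, \W)$, where now $\ba_{r^*} \neq \ba$ because no $\ba_r$ is transition-equivalent to $\ba$ whereas $\ba$ is transition-equivalent to itself. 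Either way $\F_i(\ba, \W) \geq \F_i(\tilde\ba, \W)$ with $\tilde\ba \in A_i \setminus \{\ba\}$, which is the first assertion; the middle inequality above is nothing but \eqref{eq:best_pure_action} applied to the finite action family $\{\ba_r\}$.

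For the consequences I would show that deleting a non-useful $\ba$ leaves $(\T\W)_i = \min_{\ba' \in A_i} \F_i(\ba', \W)$ unchanged for every $\W$. By (\textbf{A\ref{item:axi_compact}}), (\textbf{A\ref{item:c_lsc}}) and continuity of the $p_{ij}$, the map $\ba' \mapsto \F_i(\ba', \W)$ is lower semicontinuous on the compact set $A_i$, so the minimum is attained at some $\ba^\circ$. If $\ba^\circ \neq \ba$ the minimum already lies in $A_i \setminus \{\ba\}$; if $\ba^\circ = \ba$, the first assertion furnishes $\tilde\ba \in A_i \setminus \{\ba\}$ with $\F_i(\tilde\ba, \W) \leq \F_i(\ba, \W) = (\T\W)_i$, so the minimum over $A_i \setminus \{\ba\}$ is still $(\T\W)_i$. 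Since this holds componentwise and for all $\W$, the operator $\T$ is unchanged and hence so is its fixed point --- the value function; performing the deletions at all nodes at once then gives the claim for $A_i^u$ versus $A_i$.

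The step I expect to be the main obstacle is this last one when some $A_i$ is infinite and contains infinitely many non-useful actions, since one cannot literally remove them one at a time. I would handle it with the same reattainment idea: a minimizer $\ba^\circ$ of $\F_i(\cdot, \W)$ over $A_i$ is never probabilistically dominated (the displayed strict inequality would contradict minimality), so it is useful or replaceable; if replaceable, the defining identities force its generators $\ba_1, \dots, \ba_l$ to be minimizers too, each in a transition-class distinct from $\ba^\circ$'s, and iterating leads to a useful minimizer. For finite $A_i$ this terminates immediately, and for the continuum action sets appearing in later sections it does too because their useful actions contain the relevant extreme transition-classes; in all cases $\inf_{\ba \in A_i^u} \F_i(\ba, \W) = \min_{\ba \in A_i} \F_i(\ba, \W)$, which completes the argument.
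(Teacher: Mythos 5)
Your proposal is correct and follows essentially the same route as the paper: the key step in both is the chain $\F_i(\ba,\W) \geq \sum_r \lambda_r \F_i(\ba_r,\W) \geq \F_i(\tilde\ba,\W)$ obtained from transition-equivalence plus the cost inequality (or equality) in Definition~\ref{defn:dom_point}, with $\tilde\ba$ a minimizer over the support of $\bal$. Your additional care about why $\tilde\ba \neq \ba$ in each case and about removing all non-useful actions simultaneously when $A_i$ is infinite goes beyond what the paper writes down, but is consistent with and does not alter the argument.
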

\proof{Proof: }
Suppose $\ba$ is probabilistically dominated (or replaceable) by some $\bal$
and $\tilde{\ba} \in \argmin_{\ba_r \in \mathcal{A}(\bal)}  \F_i(\ba_r, \W),$ where $\mathcal{A}(\bal) \subset A_i$ is the set of pure actions selected by $\bal$ with positive probability.
Then
$$
\F_i(\ba, \W) \, = \, C(\x_i, \ba) + \sum\limits^{n}_{j = 1}p_{ij}(\ba)W_j
\,  \geq \,  
 \sum\limits^{l}_{r = 1} \lambda_r C(\x_i, \ba_r) + \sum\limits^{n}_{j = 1}p_{ij}(\ba)W_j
 \, = \,
\sum\limits^{l}_{r = 1} \lambda_r \F_i(\ba_r, \W) 
 \geq \F_i(\tilde{\ba}, \W).
$$
\Halmos
\endproof

\vspace*{5mm}

Since the above definitions are fully based on the cost $C(\x_i, \cdot),$ the set of ``potentially useful'' pure actions is defined locally for each $\x_i$ and independently of 
the global properties of the SSP.
The corresponding pruning of the action space can be performed as a pre-processing step to speed up the computation of the value function, which might be particularly 
worthwhile if one needs to solve a family of related problems; e.g., differing only by the identity of the target $\bt \in X.$ 

{\em Geometric Interpretation:}
Once $A_i$ no longer includes any transition-equivalent elements, it is often convenient to identify the remaining pure actions with their corresponding probability distributions over successor nodes.
Suppose $\N(\x_i) = \{\z_1, \dots, \z_m\} \subset X$ is a set of all possible immediate successors of $\x_i$ and let 
$$\Xi_m = \left\{ \bxi = (\xi_1,  ... , \xi_m) \mid \ \xi_1 + \ldots + \xi_m = 1 \text{ and } \xi_j \geq 0 \hspace{0.2cm} \text{for }j = 1, \dots, m\right\}$$ 
denote the $(m-1)$-dimensional probability simplex. 
We can then take $A_i$ to be a subset of $\Xi_m$ and  $\bxi \in A_i$ implies $p(\x_i, \bxi, \z_j) = \xi_j$ for $j = 1, \ldots, m.$
The cost of the next transition is then $C(\x_i, \bxi)$ though we will also use the notation  $C(\bxi)$ 
or $C(\xi_1, ..., \xi_m)$ 
whenever $\x_i \in X$ is clear from the context.
Allowing the use of relaxed actions in rSSP is equivalent to switching from $A_i$ to its convex hull $A_i^{co} \subset \Xi_m.$
Using the best relaxed action in each transition equivalence class essentially replaces $C$ with its lower convex envelope
\begin{equation}
\label{eq:convexified_cost}
\check{C}(\bxi) = \min_{\bal \in \Lambda(\bxi)} C(\bal) \qquad \forall \bxi \in A_i^{co},
\end{equation}
where $\bal$ is specified by some choice of $\lambda \in \Xi_m$ and 
$\{\bxi_1, \ldots, \bxi_m\} \subset A_i$
that satisfy $\xi_j = \sum^{m}_{r = 1} \lambda_r \xi_{r,j}$ for $j = 1, \ldots, m$ while
$C(\bal) = \sum^{m}_{r = 1} \lambda_r C(\bxi_r).$  
The resulting $\check{C}$ is convex\footnote{
The same ``cost convexification'' approach has also been used in \cite[Lemma 6]{fainberg1976}.  
} and thus continuous on the interior of $A_i^{co}$ (and lower semi-continuous on $\partial A_i^{co}$).
The pruning outlined above is possible because the minimum in \eqref{eqn:value_fn_vi} can only be attained at $\bxi \in A_i$ if $C(\bxi) = \check{C}(\bxi).$ 
If an optimal $\bxi$ is replaceable, that same minimum will be also achieved by another (non-replaceable) action.
An action $\bxi \in A_i$ is ``useful'' if and only if $\left(\bxi, C(\bxi) \right)$ is an extreme point of the epigraph of $\check{C}.$

For a concrete example, consider an SSP whose transition digraph is represented in Figure \ref{fig:ssp_examples}(b). 
Focusing on $\x_1$, there are two possible successor nodes, $\x_2$ and $\x_5$.
In this case, we can identify $\bxi = \left( 1-p,  p \right) \in \Xi_2$ with $p \in [0,1],$ 
which makes it convenient to visualize $K(p) = C\left( 1-p,  p \right)$ and its convexified version  $\check{K}(p) = \check{C}\left( 1-p,  p \right).$ 
In many SSPs, it is natural to consider a continuous spectrum of actions; in other applications, the set of actions is discrete and finite.
To illustrate the most general case, we will suppose here that $\x_1$ has a ``hybrid'' action set; e.g.,  
$A_1 =  [\underline{\ba}, \bar{\ba}] \cup \{\ba_1, \dots, \ba_4\}.$  
We will further 
suppose that some of these actions are transition-equivalent (yielding the same probabilities of transition to $\x_2$ and $\x_5$),
which makes the graph of $K(p)$ multivalued and possibly discontinuous. 
Figure \ref{fig:cost_pruning} illustrates the deterministic and probabilistic pruning process at $\x_1$. 
Once the transition-equivalent pure actions are removed (crossed in red in Figure \ref{fig:cost_pruning}(a)), 
we can perform additional pruning based on relaxed actions (Figure \ref{fig:cost_pruning}(b)). 
Although the value function remains unchanged during the convexification procedure, it is clear now that the optimal action at $\x_1$ will come from the set $A^u_1$.

\begin{figure*}[t]
\centering
$
\arraycolsep=1pt\def\arraystretch{0.1}
\begin{array}{cc}
\includegraphics[width = 0.5\textwidth]{det_prune_cost.tikz} &
\includegraphics[width = 0.5\textwidth]{prob_prune_cost.tikz}\\[-5pt]
\mbox{\footnotesize(a)} & \mbox{\footnotesize(b)}
\end{array}
$
\caption{
Example of deterministic (panel (a)) and probabilistic (panel (b)) pruning of
the action set
when $m = 2$. 
Depicted action choices at the node $\x_1$ in Figure \ref{fig:ssp_examples}(b), with transition probabilities $\P(\x_1 \rightarrow \x_5 \mid \ba) = p(\ba)$ and 
$\P(\x_1 \rightarrow \x_2 \mid \ba) = 1-p(\ba).$
The specific cost function $K(p(\ba))$ is chosen for the sake of illustration only.  Many application-motivated examples of cost functions are considered in \S\ref{section:ossp_and_ahj} and \S\ref{section:lane_change_formulation}.
Panel (a): The transition probabilities associated with each action in $A_1 = [\underline{\ba}, \bar{\ba}] \cup \{\ba_1, \ldots, \ba_4\}$ are indicated in green on the $p$-axis. During the deterministic pruning process, all actions $\ba \in A_1$ that are transition-equivalent to another action $\tilde{\ba} \in A_1$ with $K(p(\ba)) \ge K(p(\tilde{\ba}))$ are removed along with the corresponding portion of the transition cost curve (crossed in red).  Panel (b): Following the deterministic pruning, we obtain $\check{K}(p)$ by taking the lower convex envelope of $K$ (solid and dashed purple curve), and the resulting $A^{co}_1$ is indicated in orange above the $p$-axis. Replaceable actions $\ba'$ and $\ba_1$ are also removed at this stage. The transition probabilities associated with the remaining useful pure actions $A_1^u$ are indicated on the $p$-axis in solid purple, and their corresponding transition cost values are also marked in solid purple along $\check{K}$. \label{fig:cost_pruning}}
\end{figure*}

\subsection{Opportunistically Stochastic Shortest Path Problems}
\label{ss:ossp}
We now focus on a specific subclass of SSPs, in which every stochastically realizable path is also realizable using only deterministic actions (i.e., any pure action $\ba \in A_i$ for which $\I(\ba)$ is a singleton).  
Later, we will demonstrate their usefulness in approximating solutions of continuous optimal control problems (\S \ref{section:ossp_and_ahj}) and 
in routing of autonomous vehicles (\S \ref{section:lane_change_formulation}).

\begin{definition}[OSSP]\label{defn:ossp}
We will refer to an SSP
 as {\em Opportunistically Stochastic} (OSSP)
if
\begin{equation}
\label{eq:ossp_def}
\exists \ba \in A_i 
\text{ s.t. } 
p_{ij}(\ba) > 0 
\qquad \implies \qquad
\exists \tilde{\ba}  \in A_i 
\text{ s.t. } 
p_{ij}(\tilde{\ba}) = 1
\qquad 
\text{holds for all $i$ and $j$.}  
\end{equation}
\end{definition}

If the set of potential successor nodes  $\N(\x_i)$ has $m$ elements,
then the post-pruning $A_i^u$ can be identified with a subset of $\Xi_m,$ which in an OSSP will include all vertices of this probability simplex,
encoding deterministic transitions.  Correspondingly, the randomized-policy version of this problem (rOSSP), would have the action set $A_i^{co} = \Xi_m.$

We note that the OSSPs are also closely related to Multimode Stochastic Shortest Path (MSSP) problems introduced in \cite{vladimirsky2008label}.
MSSPs are based on a more stringent requirement that, whenever some nodes
are possible successors of $\x_i$ under any specific action,  
then {\em every} probability distribution 
over those same successor nodes must be realizable. 
To be more precise,
the {\em modes} in MSSPs are 
defined as
subsets of $X = \{\x_1, \dots, \x_n, \bt\},$ describing possible successor nodes under each particular class of actions. 
Each non-terminal node $\x \in X  \backslash \{\bt\}$ has its own set of such modes, denoted $\M(\x),$  and each mode $\m \in \M(\x)$ can be specified by enumerating the nodes in it; i.e., 
$\m = \{\z^{\m}_1, \dots, \z^{\m}_{|\m|}\} \subset \N(\x) \subset \, 
X^{} \setminus \{\x\}.$   MSSPs are built on two critical assumptions:
\begin{enumerate}[\bfseries ({M}1)]
\item
any available action has all of its successor nodes in one of these modes; i.e.,
$$
\forall \x_i \in X, \ba \in A(\x_i)   \quad \exists \m \in \M(\x_i) \text{ s.t. }  \I(\ba) \subset \m;
$$
\item\label{assumption:mssp_Actions}
every probability distribution over the set of successor nodes in any mode is achievable via some action; i.e.,
$$
\forall \x_i \in X \backslash \{\bt\}, \, \m \in \M(\x_i), \text{ and }  \bxi \in \Xi_{|\m|} \quad \exists \ba \in A(\x_i) \text{ s.t. }  p(\x_i, \ba, \z_r) = \xi_r, \quad \forall \z_r \in \m,
$$
 where $\Xi_{|\m|}$ is the $(|\m|-1)$-dimensional probability simplex.  \label{item:every_pdf_in_MSSP}
 \end{enumerate}
 
Thus, a decision at each stage of an MSSP is twofold: a deterministic choice of a mode and a choice of a probability distribution over the possible successor nodes in that mode.  This makes it natural to represent the actions as $\ba = (\m, \bxi) \in A(\x)$, where $\bxi \in \Xi_{|\m|},$
 and then re-write the optimality equation \eqref{eqn:value_fn_vi} as
\begin{align}
	\nonumber
	V^{\m}(\x) &= \min\limits_{\bxi \in \Xi_{|\m|}} \left\{C^{\m}(\x, \bxi) + \sum\limits^{|\m|}_{r = 1} \xi_r U(\z^{\m}_{r}) \right\},\\
	\nonumber
	U(\x) &= \min\limits_{\m \in \M(\x)} V^{\m}(\x),  \hspace*{45mm} \forall \x \in X \backslash \{\bt\},\\
	\label{eq:MSSP}
	U(\bt) &= 0.
\end{align}

OSSP is a generalization of MSSP since the assumption \textbf{(M\ref{item:every_pdf_in_MSSP})} implies 
the availability of deterministic transitions from $\x_i$ to each 
$\z_r \in \m \in \M(\x_i).$  
Unless an MSSP is fully deterministic, it must also include a continuum of actions spanning entire probability simplexes corresponding to each mode,
making all modes ``perfect'' in this sense.  
This clearly does not have to be the case in general OSSPs, for which each $A^u_i$ might be a proper (or even finite) subset of the probability simplex.

On the other hand, for any given OSSP, it is easy to construct an MSSP with the same value function; e.g., 
by using the convexified cost $\check{C}$ instead of the original $C$.  
To have a single mode at every node, one can define $\M(\x) = \{ \m \}$ with $\m = \N(\x) = \bigcup\limits_{\ba \in A(\x)} \I(\ba),$ and then use 
the convexified cost $\check{C}$ instead of the original $C$.
The action set $A(\x)$ can be then extended by adding all relaxed actions to satisfy \textbf{(M\ref{item:every_pdf_in_MSSP})}.

It is worth noting that in the definition of $\check{C}$, the result will not change if the relaxed actions are formed by only using pure actions $\tilde{\ba}$
such that  $\I(\tilde{\ba}) \subset \I(\ba).$ (In all other cases, $\bal \not \in \Lambda(\ba).$)
This suggests a natural approach to introduce 
a minimal set of
``imperfect modes'' in OSSPs.  We will say that $\m \subset \N(\x)$ is an imperfect mode if $\exists \ba \in A(\x)$ s.t.
$\m = \I(\ba)$ but $\not \exists \tilde{\ba} \in A(\x)$ s.t. $\m$ is a proper subset of $\I(\tilde{\ba}).$ (This ensures that a mode is never a subset of another mode.)
An action $\ba \in A(\x)$ is associated with a mode $\m \in \M(\x)$ 
if $\I(\ba) \subset \m.$ 
Thus, every action is associated with at least one mode and might well be associated with multiple modes simultaneously.

To provide a concrete example, we focus on the node $\x_6$ in Figure \ref{fig:ssp_examples}(b) and 
consider $A(\x_6) = \{\ba_1, ..., \ba_{10}\},$ where the first five actions are deterministic
($\I(\ba_j) = \{\x_j\}, \, j=1, ...,5$)  while the rest of them are not:
$$
\I(\ba_6) = \{\x_1, \x_2\}, \quad
\I(\ba_7) = \{\x_2, \x_3\}, \quad 
\I(\ba_8) = \{\x_3, \x_4\}, \quad 
\I(\ba_9) = \{\x_1, \x_2, \x_3\}, \quad
\I(\ba_{10}) = \{\x_1, \x_2, \x_4\}.
$$
Based on the above definition, $\N(\x_6) = \{\x_1, ..., \x_5\}$ and $\M(\x) = \{\m_1, ..., \m_4\}$ with
$\m_1 = \{\x_1, \x_2, \x_3\}$ (associated actions: $\ba_1, \ba_2, \ba_3, \ba_6, \ba_7, \ba_9$),
$\m_2 = \{\x_1, \x_2, \x_4\}$ (associated actions: $\ba_1, \ba_2, \ba_4, \ba_6, \ba_{10}$),
$\m_3 = \{\x_3, \x_4\}$ (associated actions: $\ba_3, \ba_4, \ba_8$),
and $\m_4 = \{\x_5\}$ (associated action: $\ba_5$).
Compared to MSSPs, these modes are ``imperfect'' since \textbf{(M\ref{item:every_pdf_in_MSSP})} is violated.  
But each mode can be ``perfected" without changing the value function by including relaxed actions 
and using $\check{C}$ to define their cost.

\section{Label-Setting and Monotone Causality in OSSPs}\label{section:label_setting_and_mc}

Since the value function $\U = [U_1, \ldots, U_n]^T$ is a fixed point of the operator $\T$ defined in \eqref{eq:T_and_F},
the simplest approach for computing $\U$ is through {\em value iterations} (VI), in which one starts
with an initial guess $\W^0 \in \R^n$ and updates it iteratively by taking $\W^{k+1} = \T \W^k.$
The operator $\T$ is generally not a contraction unless all stationary policies are known to be proper \cite{bertsekas1991analysis}.
But Tsitsiklis and Bertsekas have shown that
assumptions (\textbf{A1})-(\textbf{A5}) guarantee that this fixed point is unique and $\W^k \rightarrow \U$ as $k \rightarrow \infty$ regardless of $\W^0$ \cite{bertsekas1991analysis}.
Unfortunately, for a general SSP, this does not necessarily occur after any finite number of iterations.
The convergence can be slow and the VI algorithm is often impractical for large problems.
Finding computationally efficient alternatives to these basic value iterations has been an active research area in the last several decades.
One obvious direction is to use a Gauss-Seidel relaxation (GSVI), where the components of  $\W^{k+1}$ are computed sequentially and the previously 
computed components are immediately used in computing the remaining ones; i.e., 
$$
\W^{k+1}_i \; = \; 
\min\limits_{\ba \in A_i} \F_i \left( \ba, \, [\W^{k+1}_1, \ldots, \W^{k+1}_{i-1}, \W^k_i, \ldots, \W^k_{n} ]^T \right).
$$
But the efficiency of this approach is heavily dependent on the ordering imposed on the nodes/states in $X$
even if the convergence is achieved in finitely many steps.  E.g., for the very simple example in Figure \ref{fig:ssp_examples}(a), GSVI will require 3 iterations to converge with the default node ordering
though only 1 iteration would be needed if the ordering were reversed. More generally, this difference in the number of needed iterations can be as high as $O(n).$ 
To address this,
some implementations of GSVI alternate through several orderings that are likely to be efficient \cite{BoueDupuis}.
In others, the so called {\em label-correcting} methods, the next node/state to be updated is determined dynamically, based on the current vector of tentative labels (or values) $\W$
and the history of updates up till that point \cite{Bellman_DP_book, pape1974implementation, Bertsekas_SLF, Bertsekas_LLL, glover1986threshold}.  Some of these methods were originally developed for the deterministic shortest path setting, but have since been adapted to SSPs as well, particularly when used to discretize Hamilton-Jacobi equations; e.g., \cite{PolyBerTsi, BorRasch, Renzi}.

Other notable approaches include {\em topological value iterations} (in which the topological structure imposed on $X$ by $A$ is taken into account to attempt splitting an SSP into a sequence of causally ordered subproblems) \cite{DaiGoldsmith2007, DaiGoldsmith2009}, policy iterations (in which the goal is to produce an improving sequence of stationary policies $(\mu_k)$, with $\U$ recovered as a limit of their respective policy value vectors) \cite{howard1960dynamic}, and hybrid policy-value iterations \cite{van1976set}. 

Here we are primarily interested in a subclass of SSPs for which VI actually does converge in at most $n$ iterations regardless of $\W^0.$
(Typically, this happens when at least one element of $\W^k$ becomes newly converged in each iteration.) 
But even assuming the minimization in \eqref{eq:T_and_F} can be performed in $O(1)$ operations,
the computational cost of a single value iteration is $O(n),$ which yields the overall cost of $O(n^2)$ up to convergence.
In such SSPs, the same worst-case $O(n^2)$ computational cost also holds for all VI variants mentioned above,
while the policy iterations would still require an infinite number of steps for full convergence
if the action sets $A_i$ are infinite.
In contrast, our goal is to obtain $\U$ in $O(n \log n)$ or $O(n)$ operations, bounding the number of ``approximate $U_i$'' updates based on that node's stochastic outdegree 
(i.e., $|\N(\x_i)|$) rather than on the overall number of non-terminal nodes $n \gg \max_{\x_i}  |\N(\x_i)|.$
For the deterministic shortest path (SP) problems, this cost reduction is accomplished by the classical {\em label-setting} methods reviewed in \S \ref{ss:SP_label_setting}.
We then describe the prior (implicit) conditions for their general SSP-applicability in \S \ref{ss:SSP_causality},
and derive new (explicit) conditions for their OSSP-applicability in \S \ref{ss:OSSP_MC}.

\subsection{Label-Setting Methods for Deterministic SP Problems}\label{ss:SP_label_setting}
Classical shortest/cheapest path problems on directed graphs can be interpreted as a subclass of SSPs in which all actions yield deterministic transitions.
In that setting,  $C_{ij} = C(\x_i, \x_j)$ encodes the cost of a direct $(\x_i \rightarrow \x_j)$ transition, with the set of potential successor nodes denoted
$N_i = N(\x_i) = \{ \x_j \in X \mid C_{ij} < \infty \}.$  We will further assume that $C_{ij} \geq \delta \geq 0.$ 
The value function satisfies the following dynamic programming equations:
\begin{equation}
\label{eq:determ_DP}
	U_{i} = \min_{\x_j \in N(\x_i)} \left\{C_{ij} + U_j \right\}, \hspace{0.3cm} \forall \x_i \in X \setminus \{\bt\},
\end{equation}
with $U(\bt) = 0$.
Efficient algorithms for solving \eqref{eq:determ_DP} are well-known and covered in standard references (e.g., \cite{ahuja1993, Bertsekas_DPbook}), but we provide a quick overview for the sake of completeness.

The key idea of label-setting methods is to re-order iterations so that the tentative value of each node $\x_i$ is updated at most $|N_i|$ times.
This yields a significant performance advantage over value iterations, particularly when the outdegree of nodes is bounded and relatively small, $\kappa = \max_i |N_i| \ll n.$
Since the number of updates per node is bounded by $\kappa,$ such methods are also often considered {\em noniterative}.
The non-negativity of transition costs implies a {\em monotone causality} property: 
\begin{equation}
\label{cond:causality_determ}
U_i \text{ may depend on $U_j$ only if } \; U_i \geq U_j.
\end{equation} 
If all $C_{ij} > 0$ and nodes could be ordered monotonically based on $U_i$'s, GSVI
would converge in a single iteration. 
But since such an ordering is not known in advance, it has to be obtained at run-time.  
Dijkstra's algorithm \cite{dijkstra1959note} accomplishes this by recomputing tentative labels $V_i$ while maintaining two lists of nodes: the list of known/converged nodes $K = \{\x_i \mid V_i = U_i \text{ is confirmed} \}$ and the list of  tentative nodes $L = X \setminus K$ for which $V_i \geq U_i$.  The basic version of this algorithm starts with $V(\bt) = U(\bt) =0, 
\; K = \{\bt\},$ and $V_i = + \infty$ for all $\x_i \in L = X \setminus \{\bt\}.$  At each stage of Dijkstra's method, the node $\bar{\x}$ with the current smallest finite value in $L$ is moved to $K$,
and other tentative nodes adjacent to it (i.e., all $\x_i \in L$ s.t. $\bar{\x} \in N_i$) are updated by setting
$V_i := \min\limits_{\x_j \in N_i \cap K} \left\{C_{ij} + V_j \right\},$ or, more efficiently, by using $V_i := \min \left( V_i, \, C(\x_i, \bar{\x}) + V(\bar{\x}) \right).$
The process terminates once $L = \emptyset$ or once all values remaining in $L$ are infinite.  (In the latter case, the nodes remaining in $L$ are not path-connected to $\bt$.)
The method terminates after at most $\kappa n$ updates and yields the correct $U_i$ values even if $\delta = 0.$  To identify $\bar{\x}$ efficiently, $L$ is typically implemented using heap-sort data structures, resulting in an overall computational cost of $O(n \log n).$

If $\delta >0,$ 
the nodes whose tentative values are less than $\delta$ apart cannot depend on each other.
This makes it possible to use another label-setting method due to Dial \cite{dial1969algorithm},
in which the tentative values are not sorted but instead placed into ``buckets'' of width $\delta.$
At each stage of the algorithm, all tentative nodes in the current smallest bucket are moved to $K$ simultaneously,
and all tentative labels of nodes adjacent to them are updated, switching these adjacent nodes to new buckets if necessary. 
Since inserting to and deleting from a bucket can be performed in $O(1)$ time, the overall computational complexity of Dial's method
is $O(n)$.   Unlike Dijkstra's algorithm, Dial's method is also naturally parallelizable.  Nevertheless, which of them is more efficient in practice depends on the properties of the SP problem.  

\subsection{Applicability of Noniterative Methods to general SSPs}
\label{ss:SSP_causality}

Avoiding iterative methods in general SSPs is usually
harder.
Given any stationary policy $\mu,$ one can define the corresponding dependency digraph $G_{\mu}$ 
by starting with the nodes $X$ and adding arcs wherever direct transitions are possible under $\mu$.
(I.e., adding an arc from $\x_i$ to $\x_j$ whenever $p_{ij} \left( \mu(\x_i) \right) >0.$)
We will call an SSP {\em causal} if there exists an optimal stationary policy $\mu^*$
whose dependency digraph $G_{\mu^*}^{}$ is acyclic.
As noted in \cite[Volume 2, \S2.3.3]{Bertsekas_DPbook},
value iterations on any such causal SSP will converge to $\U$ after at most $n$ iterations.
If one were to use a reverse topological ordering of $G_{\mu^*}^{}$ to sort $X$,
the 
GSVI algorithm
would converge in a single iteration.
This property is trivially satisfied by special {\em explicitly causal} SSPs, in which the dependency digraph of {\em every} stationary policy is acyclic
(and thus the transition digraph $G$ is as well),
making it easy to obtain the full $\U$ with only $n$ node value updates regardless of the properties of $C$.  (This mirrors the fact that Dijkstra's and Dial's methods
are not really needed to find deterministic shortest paths on acyclic graphs.)

Turning to a broader class of SSPs that are not explicitly causal, 
a natural question to consider is whether the label-setting methods might be applicable.
Following Bertsekas \cite{Bertsekas_DPbook}, 
we will say that an optimal stationary policy $\mu^*$ is {\em consistently improving} if, for all $\x_i \neq \bt,$
\begin{equation}
\label{eq:improving_policy}
p_{ij}(\mu^*(\x_i)) > 0 \implies 
U_i > U_j.
\end{equation}
This property is a stochastic analogue  of \eqref{cond:causality_determ}. 
If such $\mu^*$ is known to exist, \eqref{eq:improving_policy} guarantees that an SSP-version of Dijkstra's method 
will correctly produce $\U$ as its output\footnote{It might seem natural to pose a ``$U_i \geq U_j$'' condition in \eqref{eq:improving_policy}, to fully mirror \eqref{cond:causality_determ}.  But for non-deterministic actions, this condition turns out to be too weak and may result in Dijkstra's failing to converge to $\U.$  See \cite[Figure 1]{vladimirsky2008label} for a simple example.}.  In terms of implementation, the only difference from the deterministic SP case described in \S\ref{ss:SP_label_setting},
is that, once $\bar{\x}$ is moved to $K$, we would need to update all $\x_i$'s such that $\bar{\x} \in \N(\x_i)$ by using
\begin{equation}
\label{eq:Dijkstra_update_SSP}
V_i \; := \; \min \left( V_i, 
\;
\min_{\substack{\ba \in A_i \text{ s.t. }\\ \bar{\x} \in \I(\ba) \subset K}} \F_i(\ba, \V) \right).
\end{equation}

Continuing this approach, we will say that an optimal stationary policy $\mu^*$ is {\em consistently $\delta$-improving} if, for some $\delta>0$ and all $\x_i \neq \bt,$
\begin{equation}
\label{eq:delta_improving_policy}
p_{ij}(\mu^*(\x_i)) > 0 \implies U_i \geq U_j + \delta.
\end{equation}
The existence of such a $\mu^*$ similarly guarantees the applicability of Dial's method\footnote{
Consistently $\delta$-improving policies were first defined in \cite{vladimirsky2008label} with a strict inequality in \eqref{eq:delta_improving_policy} and $\delta \geq 0.$
Since the buckets are non-overlapping and have a positive width $\delta,$ we know that $(U_j + \delta)$ is never in the same bucket as $U_j.$ 
Thus, the above definition is more suitable to guarantee the applicability of Dial's method.}  with buckets of width $\delta.$ 
If we assume that the minimization in  \eqref{eq:Dijkstra_update_SSP} can be performed in $O(1)$ operations 
and  stochastic outdegrees $|\N(\x_i)|$ are bounded by a constant $\kappa$, then
the overall computational cost will scale the same way as in the deterministic case: $O(n \log n)$ for Dijkstra's and $O(n)$ for Dial's method.
 
\begin{definition}
We will say that an
SSP is {\em monotone causal} (MC) if at least one of its optimal stationary policies is consistently improving.
We will refer to an SSP as {\em monotone $\delta$-causal} ($\delta$-MC) if that optimal stationary policy is consistently $\delta$-improving.
\end{definition}

Unfortunately, the above criteria based on \eqref{eq:improving_policy} and \eqref{eq:delta_improving_policy}
are implicit and hard to apply in practice since none of the optimal policies are known before $\U$ is computed.
Thus, it is natural to search for sufficient
($\delta$-)MC
criteria that can be verified a priori and locally (e.g., based on the cost function properties at each node / state) without considering the global structure of the SSP.
The first two such SSP examples were found by Tsitsiklis \cite{tsitsiklis1994efficient, tsitsiklis1995efficient}, who used them to apply Dijkstra's and Dial's methods to regular-grid semi-Lagrangian 
discretizations of isotropic optimal control problems.  Related and more general methods were also developed by others (see the detailed discussion in \S \ref{section:ossp_and_ahj}),
and the Multimode SSPs were then introduced in \cite{vladimirsky2008label} -- both to provide an overall structure for discussing the label-setting in PDE 
discretizations and to show that the monotone ($\delta$-)causality can be guaranteed even for some SSPs unrelated to optimal control problems.
The idea was to pose the sufficient conditions based on the cost $C$ only, and have them verified on a mode-per-mode, node-per-node basis.
Given the close relationship between the OSSPs and MSSPs, it is worth highlighting the main difference between the criteria in \cite{vladimirsky2008label}
and those developed below.  
In addition to a restrictive condition {\bf (M2)}
described in \S\ref{ss:ossp}, the criteria  
developed in \cite{vladimirsky2008label}
have either required $C$ to be concave or posed conditions on its derivatives, while we allow $C$ to be merely lower semi-continuous. 

\subsection{Guaranteeing Monotone Causality in OSSPs} 
\label{ss:OSSP_MC}

Consider an OSSP satisfying assumptions {\bf (A1)}-{\bf (A4)}, {\bf (A5')}, {\bf (A6)}, and \eqref{eq:ossp_def}.
Suppose an action $\ba \in A_i$ results in a list of possible successor nodes $\I(\ba) = \{ \z_1, ..., \z_m \} \subset X \setminus \{\x_i\},$
with the respective transition probabilities\footnote{Of course, these $m$, $\z_j$'s, and $\xi_j$'s are always action-dependent.
But for the sake of readability, we do not indicate this in the notation whenever a specific action is clear from the context.}
$p(\x_i, \ba, \z_j) = \xi_j > 0$ for $j = 1, \ldots, m$
and $\bxi = (\xi_1, ... , \xi_m) \in \interior(\Xi_m).$  
Assuming this $\ba \in A_i$ is not deterministic (i.e., $m>1$) and choosing any specific $r \in \{1, ..., m\},$ we define 
$\bgamma_r = (\gamma_{r,1}, ..., \gamma_{r,m})$ to be an oblique (proportional) projection of  $\bxi$ as follows 
			$$
			\gamma_{r,j} \; = \; 
			\begin{cases}
			0, & \text{ if } j =r;\\
			\xi_j / (1-\xi_r), & \text{ otherwise.} 
			\end{cases}
			$$
We note that once we omit the $r$-th zero component of $\bgamma_r,$ it can be thought of as a point in $\interior \left( \Xi_{(m-1)} \right).$ 
The set $\Lambda(\bgamma_r)$ of relaxed actions 
transition-equivalent to $\bgamma_r$ is nonempty, 
and the convexified cost of $\bgamma_r$ can be defined as $\check{C}(\bgamma_r) = \min_{\bal \in \Lambda(\bgamma_r)} C(\bal).$		

\begin{theorem}[Sufficient Monotone $\delta$-Causality Condition in OSSPs]\label{theorem:gen_caus_condition_improved} \mbox{ }\\
Suppose there exists a $\delta \ge 0$ such that, for all  $\x_i \in X \backslash \{\bt\}$ and $\ba \in A_i^u,$
\begin{itemize}
\item 
if $\ba$ is deterministic, then $C(\x_i, \ba) \geq \delta$;
\item
if $\ba$ is not deterministic, then
			\begin{equation}
			\label{eq:improved_MC_cond}
			C(\x_i, \ba) \geq (1-\xi_r) \check{C}(\bgamma_r) + \xi_r \delta, \qquad \forall r \in \{1, ..., |\I(\ba)| \}.
			\end{equation}
\end{itemize}
If these conditions are satisfied, this OSSP is  monotone causal and Dijkstra's method is applicable.
If $\delta >0,$ the OSSP is monotone $\delta$-causal and Dial's method with buckets of width $\delta$ is also applicable.
\end{theorem}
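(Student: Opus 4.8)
The plan is to build, directly from the value function $\U$, one optimal stationary policy $\mu^*$ that is consistently improving in the sense of \eqref{eq:improving_policy} (and consistently $\delta$-improving, \eqref{eq:delta_improving_policy}, when $\delta>0$); monotone ($\delta$-)causality and the applicability of Dijkstra's (resp.\ Dial's) method then follow from \S\ref{ss:SSP_causality}. By \textbf{(A1)}--\textbf{(A3)} the minimum in \eqref{eqn:value_fn_vi} is attained at each node, and (using the epigraph characterization of $A_i^u$ together with Observation~\ref{obs:ssp_useless_non_optimum}) one checks that at every $\x_i$ some optimal pure action already lies in $A_i^u$; among such useful optimal pure actions I would take $\mu^*(\x_i)$ to be one with \emph{inclusion-minimal} successor set $\I(\cdot)$, which is legitimate since $\I(\cdot)\subseteq\N(\x_i)$ is finite. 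If $\ba_i:=\mu^*(\x_i)$ is deterministic, say $\I(\ba_i)=\{\z\}$, then \eqref{eqn:value_fn_vi} gives $U_i=C(\x_i,\ba_i)+U(\z)$, whence $U_i\ge U(\z)+\delta$ by hypothesis and $U_i>U(\z)$ by \textbf{(A5')}; the real work is at the non-deterministic nodes.

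The crux is a single estimate. Fix $\x_i$ with $\ba_i=\mu^*(\x_i)$ non-deterministic, write $\I(\ba_i)=\{\z_1,\dots,\z_m\}$ and $p(\x_i,\ba_i,\z_j)=\xi_j>0$, fix $r$, and form the oblique projection $\bgamma_r$. I claim that
\[
\check C(\bgamma_r)+\sum_{j\ne r}\gamma_{r,j}\,U(\z_j)\ \ge\ U_i .
\]
Indeed, every $\bal\in\Lambda(\bgamma_r)$ is an admissible relaxed action at $\x_i$, since it only redistributes outgoing probability mass among the successors $\z_j$ with $j\ne r$, all of which lie in $\N(\x_i)$; hence $\F_i(\bal,\U)\ge(\T\U)_i=U_i$, because passing to relaxed actions does not lower the value (see \eqref{eq:best_pure_action}), and since $\F_i(\bal,\U)=C(\bal)+\sum_{j\ne r}\gamma_{r,j}U(\z_j)$ for all such $\bal$, infimizing over $\Lambda(\bgamma_r)$ gives the claim. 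Rewriting \eqref{eqn:value_fn_vi} with $\xi_j=(1-\xi_r)\gamma_{r,j}$ for $j\ne r$ yields
\[
U_i\ =\ C(\x_i,\ba_i)+\xi_r\,U(\z_r)+(1-\xi_r)\sum_{j\ne r}\gamma_{r,j}\,U(\z_j),
\]
so substituting the hypothesis $C(\x_i,\ba_i)\ge(1-\xi_r)\check C(\bgamma_r)+\xi_r\delta$ of \eqref{eq:improved_MC_cond}, together with the estimate above (which makes the bracketed combination $\check C(\bgamma_r)+\sum_{j\ne r}\gamma_{r,j}U(\z_j)$ at least $U_i$), everything collapses to $U_i\ge(1-\xi_r)U_i+\xi_r\big(U(\z_r)+\delta\big)$; dividing by $\xi_r\in(0,1)$ leaves $U_i\ge U(\z_r)+\delta$. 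Intuitively, \eqref{eq:improved_MC_cond} asserts that $\ba_i$ is never cheaper than ``with probability $\xi_r$ pay at least $\delta$ and go to $\z_r$, otherwise use the $\z_r$-avoiding relaxed action $\bgamma_r$'', whose two branches have cost-to-go $U(\z_r)+\delta$ and at least $U_i$. When $\delta>0$ this is exactly \eqref{eq:delta_improving_policy} for $\mu^*$.

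For $\delta=0$ one must sharpen ``$U_i\ge U(\z_r)$'' to the strict inequality of \eqref{eq:improving_policy}, and this is where the minimal-support choice of $\mu^*(\x_i)$ is used. Suppose $U_i=U(\z_r)$ for some successor $\z_r$ of $\ba_i$. Then every inequality above is an equality, so the cost-minimizing member of $\Lambda(\bgamma_r)$ is an optimal relaxed action at $\x_i$; written as a convex combination of pure actions — each with all its successors in $\I(\ba_i)\setminus\{\z_r\}$, since the distribution $\bgamma_r$ puts no mass on $\z_r$ — at least one of those pure actions is itself optimal (a weighted average cannot lie below its minimum term). Passing, via Observation~\ref{obs:ssp_useless_non_optimum}, to a useful optimal pure action of no larger successor set, we obtain an element of $A_i^u$ whose successor set is strictly contained in $\I(\ba_i)$, contradicting the minimality of $\I(\ba_i)$. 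Hence $U_i>U(\z_r)$ for every successor of $\ba_i$.

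Finally, by the previous two paragraphs $\U$ strictly decreases along every arc of the dependency digraph $G_{\mu^*}$, so $G_{\mu^*}$ is acyclic; since by \textbf{(A6)} each non-terminal node has an outgoing arc to a distinct node, following arcs from any state reaches $\bt$ within $n$ steps, so $\mu^*$ is proper. A proper stationary policy whose action at each node attains the minimum in \eqref{eqn:value_fn_vi} has $\U$ for its cost-to-go vector (the governing linear system is nonsingular), and $\U$ is the optimal value function by \textbf{(A1)}--\textbf{(A5)}; hence $\mu^*$ is an optimal stationary policy that is consistently improving, and consistently $\delta$-improving when $\delta>0$. Therefore the OSSP is monotone ($\delta$-)causal, and Dijkstra's method (resp.\ Dial's with buckets of width $\delta$) applies. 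I expect the main obstacle to be discovering the right comparison object — the oblique projection $\bgamma_r$ and its convexified cost $\check C(\bgamma_r)$ — and noticing that \eqref{eq:improved_MC_cond} is calibrated exactly so the resulting chain of inequalities telescopes to $U_i\ge U(\z_r)+\delta$; the secondary technicality is the inclusion-minimal, useful choice of $\mu^*(\x_i)$ needed for strict monotone causality when $\delta=0$.
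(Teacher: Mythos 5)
Your proposal is correct and follows essentially the same route as the paper's proof: select an optimal action with minimal successor set, compare $U_i$ against the cost-to-go of relaxed actions in $\Lambda(\bgamma_r)$, and combine with \eqref{eq:improved_MC_cond} so the inequalities telescope to $U_i \ge U(\z_r)+\delta$. The only cosmetic difference is that the paper injects strictness up front (minimality of $|\I(\ba^*)|$ forces $U_i < \F_i(\bal,\U)$ for all $\bal\in\Lambda(\bgamma_r)$, yielding $U_i > U(\z_r)+\delta$ in one pass), whereas you derive the non-strict bound first and then handle the $\delta=0$ equality case by the same minimality contradiction.
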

\begin{proof}{Proof: }
Define the set of optimal actions $A^*_i = \argmin\limits_{\ba \in A_i} \left\{C(\x_i, \ba) + \sum\limits^{n+1}_{j = 1}p_{ij}(\ba)U_j \right\},$ and suppose that $\ba^* \in A^*_i$ yields the minimal number of successor nodes among them.  (I.e., $|\I(\ba^{\sharp})| \geq |\I(\ba^*)|$ for all $\ba^{\sharp} \in A^*_i.$)
If $\ba^*$ is deterministic, then $\I(\ba^*) = \{\z_1\}$ and $U_i$ depends only on $U(\z_1)$ while $U_i = C(\x_i, \ba^*) + U(\z_1) \geq \delta + U(\z_1).$
For $\delta=0$ this inequality is strict due to ({\bf A5'}).

If $\ba^*$ is not deterministic, its use leads to one of the successor nodes $\{\z_1, ..., \z_m\}$ with respective probabilities $\xi_r >0,$ 
and $U_i$ depends on all of the $U(\z_r)$ with $r \in \{1, ..., m\}.$  Choosing any specific $r$ and combining the optimality of $\ba^*$ 
with \eqref{eq:improved_MC_cond}, we see that
\begin{equation}
\label{eq:step1}
U_i \; = \; C(\x_i, \ba^*) + \sum\limits^{m}_{j = 1} \xi_j U(\z_j)
\; \geq \; (1-\xi_r) \check{C}(\bgamma_r) + \xi_r \delta + \sum\limits^m_{j = 1} \xi_j U(\z_j).
\end{equation}
On the other hand, using $\ba^*$ must be strictly better that using any relaxed action $\bal \in \Lambda(\bgamma_r)$. Otherwise, 
the best pure action $\tilde{\ba}$ among those potentially selected by $\bal$ would have to be at least 
as good as $\ba^*$, leading to $\tilde{\ba} \in A^*_i$ with $|\I(\tilde{\ba})| \leq m-1,$ which is a contradiction.
Thus, we have $U_i < \F_i(\bal, \U)$ for all $\bal \in \Lambda(\bgamma_r),$ which implies
\begin{equation}
\label{eq:step2}
U_i \; < \; \check{C}(\bgamma_r) + \sum_{j \neq r} \gamma_{r,j} U(\z_j)
 \; = \; \check{C}(\bgamma_r) + \frac{1}{(1-\xi_r)} \sum_{j \neq r}\xi_j U(\z_j).
\end{equation}
Multiplying both sides by $(1-\xi_r)$ and rearranging,
\begin{equation}\label{eq:step3}
U_i \; < \; (1-\xi_r) \check{C}(\bgamma_r) + \sum_{j \neq r}\xi_j U(\z_j) + \xi_r U_i.
\end{equation}
Combining \eqref{eq:step1} and \eqref{eq:step3}, canceling the terms present on both sides of the inequality,
and dividing by $\xi_r > 0,$ we obtain
$U_i \;  >  \; \delta + U(\z_r),$
which proves the monotone ($\delta$-)causality of this OSSP.
\Halmos
\end{proof}

The above condition is explicit and sharp in the sense outlined in Theorem \ref{thm:mc_condition_is_sharp},
but it requires checking the convexified cost of 
multiple oblique projections for each non-deterministic action.
We now provide another sufficient condition, which is more restrictive but easier to verify.

\begin{theorem}[Simplified Monotone $\delta$-Causality Condition in OSSPs]\label{theorem:gen_caus_condition} \mbox{}\\
Suppose there exists a $\delta \geq 0$ such that, for all $\x_i \in X \backslash \{\bt\}$, $\ba \in A_i^u,$ and every $r \in \{1, ..., |\I(\ba)| \},$
			\begin{equation}
				\label{eq:original_MC_cond}
				C(\x_i, \ba) \; \geq \; \sum^{m}_{j = 1, j \neq r} \xi_j C_j  + \xi_r \delta,
			\end{equation} 
where $\I(\ba) = \{ \z_1, ..., \z_m \}, \, p(\x_i, \ba, \z_j) = \xi_j > 0,$ and each $C_j$ encodes the cost of a deterministic action $\e_j \in A_i$ transitioning to $\z_j.$
If these conditions are satisfied, this OSSP is monotone \mbox{($\delta$-)}causal.
\end{theorem}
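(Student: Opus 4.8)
The plan is to obtain this simplified criterion as a direct corollary of Theorem~\ref{theorem:gen_caus_condition_improved}: I would verify that hypothesis \eqref{eq:original_MC_cond} forces the (sharper-looking but more delicate) hypothesis \eqref{eq:improved_MC_cond}, after which monotone $(\delta$-)causality follows verbatim from the earlier theorem. So essentially no new machinery is needed — the work is a pointwise inequality manipulation carried out node-by-node and action-by-action.

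First I would dispatch the deterministic useful actions: if $\ba \in A_i^u$ has $\I(\ba) = \{\z_1\}$, then $m = 1$, $\xi_1 = 1$, the only admissible index is $r = 1$, the sum over $j \neq r$ in \eqref{eq:original_MC_cond} is empty, and the condition collapses to $C(\x_i, \ba) \geq \delta$ — exactly the deterministic-case requirement of Theorem~\ref{theorem:gen_caus_condition_improved}. Next, for a non-deterministic $\ba \in A_i^u$ with $\I(\ba) = \{\z_1, \ldots, \z_m\}$, $m \geq 2$, transition probabilities $\xi_j > 0$, and a fixed $r$, I would bound from above the convexified cost $\check{C}(\bgamma_r)$ that appears in \eqref{eq:improved_MC_cond}. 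The OSSP property \eqref{eq:ossp_def} guarantees that for each $j$ there is a deterministic action $\e_j \in A_i$ with $p(\x_i, \e_j, \z_j) = 1$; set $C_j = C(\x_i, \e_j)$ as in the statement. The relaxed action that selects $\e_j$ with probability $\gamma_{r,j} = \xi_j/(1-\xi_r)$ for $j \neq r$ realizes exactly the distribution $\bgamma_r$, hence lies in $\Lambda(\bgamma_r)$ in the sense of Definition~\ref{defn:trans_equiv}, and its cost is $\tfrac{1}{1-\xi_r}\sum_{j \neq r}\xi_j C_j$; so by \eqref{eq:convexified_cost}, $\check{C}(\bgamma_r) \leq \tfrac{1}{1-\xi_r}\sum_{j \neq r}\xi_j C_j$. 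Multiplying through by $(1-\xi_r) > 0$ and adding $\xi_r \delta$ yields
$$(1-\xi_r)\check{C}(\bgamma_r) + \xi_r \delta \;\leq\; \sum_{j \neq r}\xi_j C_j + \xi_r \delta \;\leq\; C(\x_i, \ba),$$
where the last step is precisely \eqref{eq:original_MC_cond}. Thus \eqref{eq:improved_MC_cond} holds for every useful action and every $r$, and invoking Theorem~\ref{theorem:gen_caus_condition_improved} completes the proof.

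I do not expect a genuine obstacle here; the points needing care are only: (i) confirming that the relaxed action assembled from the $\e_j$'s is honestly transition-equivalent to $\bgamma_r$, so that it is an admissible competitor in the minimum defining $\check{C}(\bgamma_r)$ — this is where it helps that $\I(\e_j) = \{\z_j\} \subset \I(\ba)$, so no extraneous successor nodes are introduced; (ii) the division by $1-\xi_r$, legitimate since $m \geq 2$ together with $\xi_j > 0$ forces $\xi_r < 1$; and (iii) reading the empty sum as $0$ in the deterministic case. Everything else is bookkeeping, and no global or topological structure of the SSP is invoked: the reduction is entirely local, exactly as intended.
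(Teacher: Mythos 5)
Your proposal is correct and follows essentially the same route as the paper's primary proof: both reduce \eqref{eq:original_MC_cond} to \eqref{eq:improved_MC_cond} by exhibiting the relaxed action that selects each deterministic $\e_j$ with probability $\xi_j/(1-\xi_r)$, noting it is transition-equivalent to $\bgamma_r$ so that $\check{C}(\bgamma_r) \leq \frac{1}{1-\xi_r}\sum_{j\neq r}\xi_j C_j$, and then invoking Theorem~\ref{theorem:gen_caus_condition_improved}. (The paper additionally records a second, self-contained argument avoiding oblique projections, but your reduction alone is a complete proof.)
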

\begin{proof}{Proof: }
If $\ba$ is deterministic and leads to $\z_r$, then \eqref{eq:original_MC_cond} yields $C_r = C(\x_i, \ba) \geq \delta.$
If $\ba$ is not deterministic, then all of these $\e_j$'s  exist by the definition of OSSPs, and
a relaxed action $\bal$ selecting each $\e_j$ with probability $\xi_j / (1 - \xi_r)$ is transition-equivalent to $\bgamma_r.$
Thus, $C(\bal) = 
\frac{1}{(1 - \xi_r)} \sum^{m}_{j = 1, j \neq r} \xi_j C_j  \geq \check{C}(\bgamma_r)$ and 
\eqref{eq:original_MC_cond} implies \eqref{eq:improved_MC_cond}.
\Halmos

Since it is simple and instructive, we also include an alternative/direct proof that does not use
oblique projections.  Suppose $\ba \in A_i$ is optimal.  If it is deterministic and leads to $\z_r$, then $\xi_r = 1$ and \eqref{eq:original_MC_cond} implies
$U(\x_i) = C_r + U(\z_r) \geq \delta + U(\z_r).$ 

If that $\ba$ is not deterministic and \eqref{eq:original_MC_cond} holds with $\xi_r \in (0,1),$ 
\begin{align}
\label{eq:originalMC_step1}
U(\x_i) \; = \; C(\x_i, \ba) + \sum_{j=1}^m \xi_j U(\z_j)
\; \geq \; 
\sum^{m}_{j = 1, j \neq r} \xi_j \left(C_j  + U(\z_j) \right) + \xi_r \left(\delta + U(\z_r) \right).
\end{align}
From the definition of the value function, $\left(C_j + U(\z_j)\right) \geq U(\x_i)$ for all $j$.
Combining with \eqref{eq:originalMC_step1},
$$
U(\x_i) 
\geq  
\hspace*{-3mm}
\sum^{m}_{j = 1, j \neq r} \xi_j U(\x_i) + \xi_r \left(\delta + U(\z_r) \right)
=   
(1- \xi_r) U(\x_i) + \xi_r \left(\delta + U(\z_r) \right)
\quad \Longrightarrow \quad
U(\x_i) \geq  \delta + U(\z_r).
$$
\Halmos
\endproof
\end{proof}

\begin{figure}[t]
\centering
$
\arraycolsep=1pt\def\arraystretch{0.1}
\begin{array}{cc}
\includegraphics[width = 0.5\textwidth]{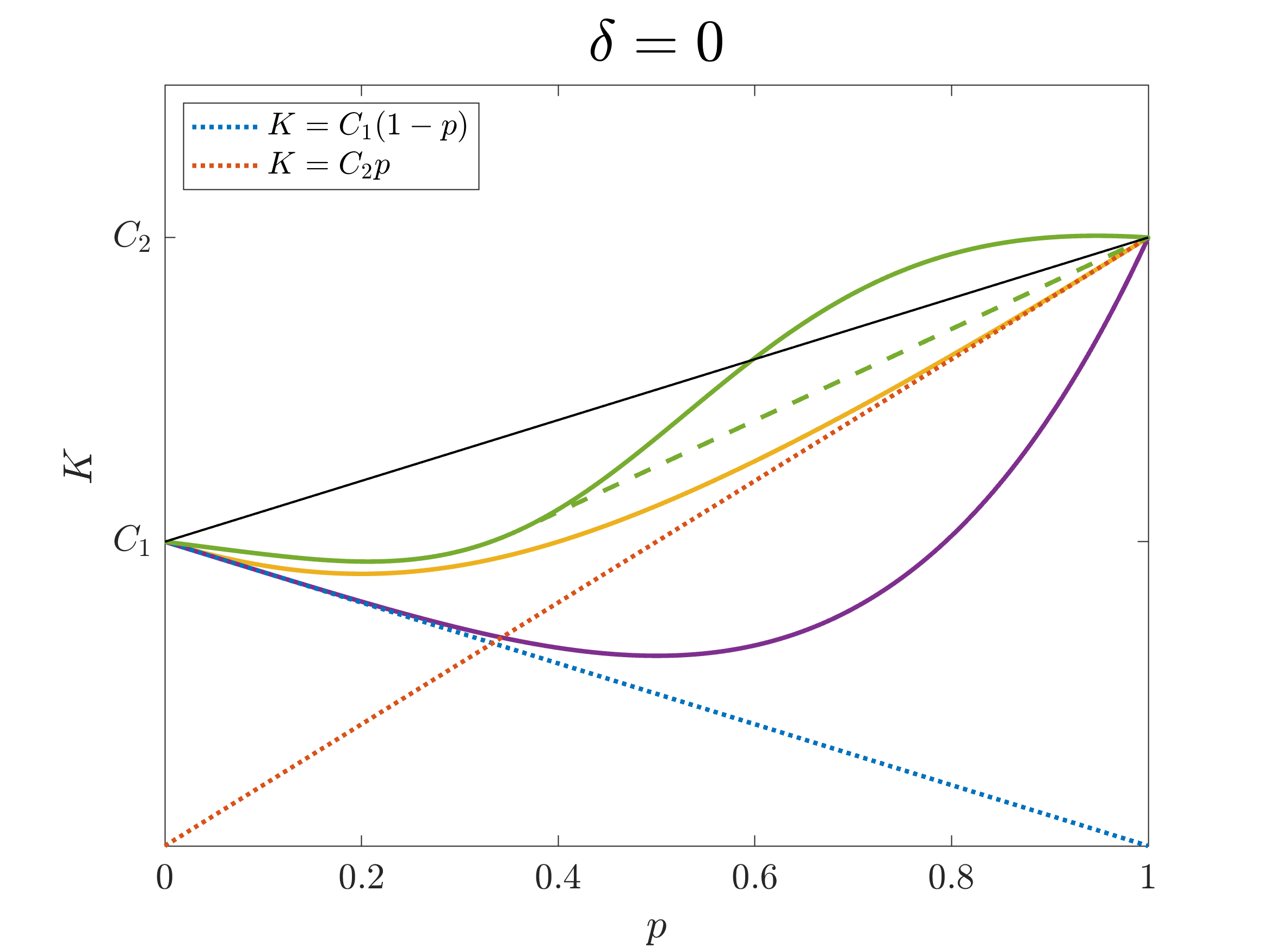} &
\includegraphics[width = 0.5\textwidth]{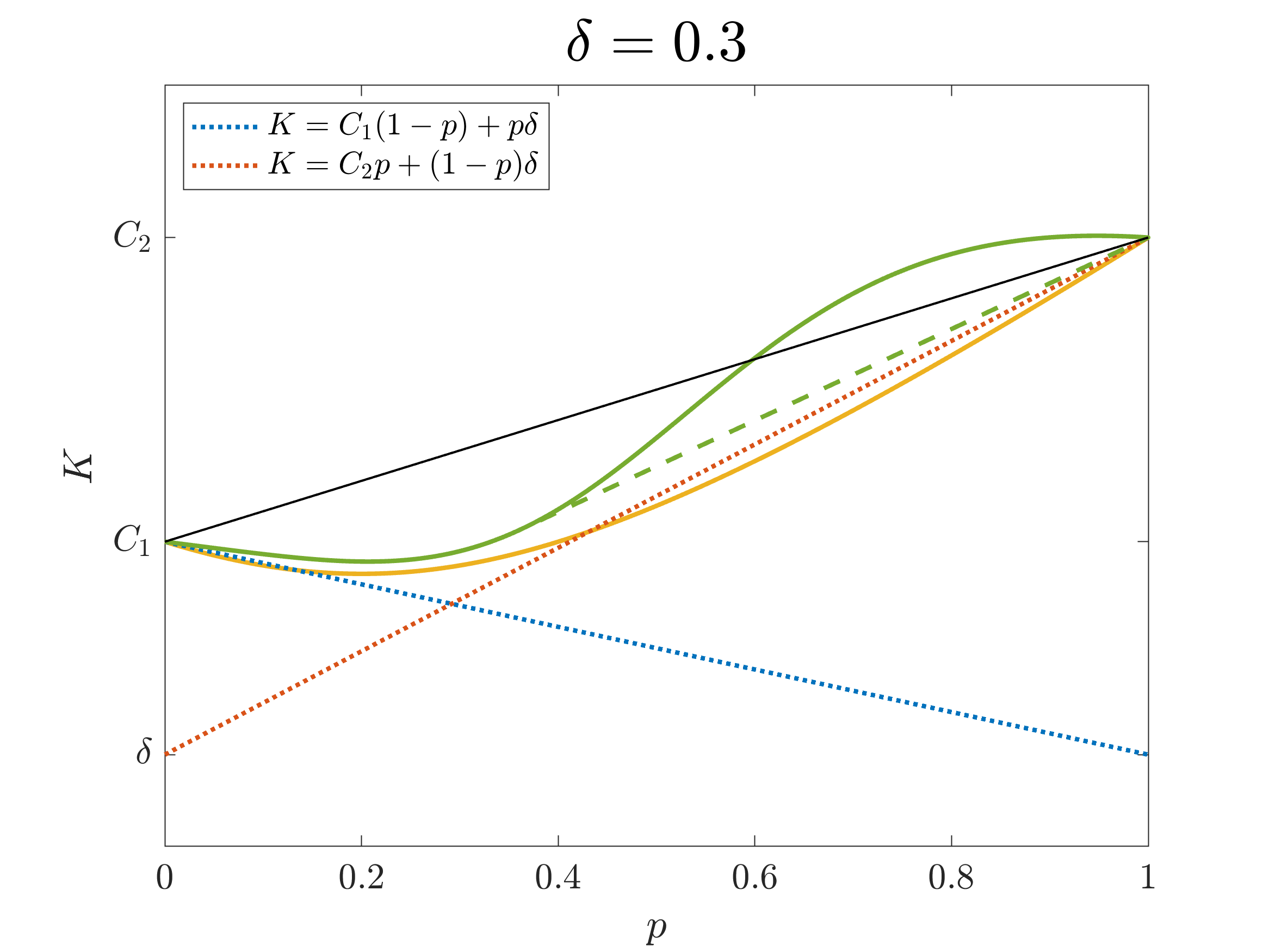} \\[3pt]
\mbox{\footnotesize(a)} & \mbox{\footnotesize(b)}
\end{array}
$
\caption{The geometric interpretation of condition \eqref{eq:original_MC_cond} for $m = 2$ with $\delta = 0$ (panel (a)) and $\delta = 0.3$ (panel (b)) for several examples of transition cost functions. 
The solid purple and solid gold curves are smooth and convex. The solid green curve is smooth and nonconvex, and the green-dashed curve is its convexified version. In all cases, $K(p)$ is monotone ($\delta$-)causal provided that the curve stays entirely above the two restriction lines on the interval $[0, 1]$. 
In (a), the purple curve violates the condition \eqref{eq:original_MC_cond} for $r = 1$ even with $\delta = 0$; so, it cannot be monotone causal. 
While the other two curves satisfy \eqref{eq:original_MC_cond} with $\delta = 0$, only the nonconvex curve (along with its convexified version) is monotone $\delta$-causal for $\delta \leq 0.3$. The smooth gold convex curve can only be monotone causal (with $\delta = 0$), as the restriction lines imposed by \eqref{eq:original_MC_cond}  coincide with its tangent lines at $p = 0$ and $p = 1$. \label{fig:mc_geometric_illustration}}
\end{figure}

\begin{remark}
\label{rem:difference_in_MC_conditions}
Note that the two criteria \eqref{eq:improved_MC_cond} and  \eqref{eq:original_MC_cond} are equivalent when $m=2.$ 
The latter also has a simple geometric interpretation illustrated in Figure \ref{fig:mc_geometric_illustration}  with $\bxi = (1-p, p) \in \Xi_2$
and the assumption that all actions $\ba \in A_i$ lead to the same two successor states $\z_1$ and $\z_2.$
The oblique projections of any $\bxi \in \interior (\Xi_2)$ will correspond to deterministic transitions to $\z_1$ and $\z_2.$
This OSSP is monotone ($\delta$-)causal if
the graph of $K(p) = C(1-p,p)$ does not venture below the two straight lines
$K = (1-p) C_1 + \delta p$ and $K = p C_2 + \delta (1-p).$

For $m > 2,$ the two criteria are only equivalent if $\sum^{m}_{j = 1, j \neq r} \xi_j C_j  = (1 - \xi_r) \check{C}(\bgamma_r)$ for each $r$,
which is only true if $\check{C}$ is linear on $\partial \Xi_m$ (or, equivalently, if $A_i^u \cap \partial \Xi_m$ contains only deterministic actions).
Unfortunately, this is often not the case for OSSPs arising in practical applications.  
Consider, for example, the case where $m = 3, \, A_i = \Xi_3,$ and the cost function is 
$
C(\bxi) = \sqrt{\xi_1^2 + \xi_2^2 + \xi_3^2}.
$
Theorem  \ref{theorem:gen_caus_condition_improved} guarantees its monotone causality:
if $\xi_r > 0$ then  
$\check{C}(\bgamma_r) = C(\bgamma_r)  = (1-\xi_r)^{-1} \sqrt{ \sum_{j \neq r} \xi_j^2}.$
Thus, $\left( C(\bxi) \right)^2  \, -  \, \left( (1-\xi_r)C(\bgamma_r ) \right)^2 = \xi_r^2 > 0,$  which verifies that \eqref{eq:improved_MC_cond} holds for $\delta = 0$.
On the other hand, the simpler criterion from Theorem \ref{theorem:gen_caus_condition} is not satisfied by this $C$
since \eqref{eq:original_MC_cond} requires
$\sqrt{\xi_1^2 + \xi_2^2 + \xi_3^2} \ge \sum_{j \neq r} \xi_j,$
which is violated on a large part of $\Xi_3,$ including at $\bxi = (\frac{1}{3}, \frac{1}{3}, \frac{1}{3}).$
\end{remark}

\vspace*{3mm}

For an OSSP already known to be monotone causal, another natural question to ask is about the largest $\delta \geq 0$ for which it is monotone $\delta$-causal.
(If that maximal $\delta_{\star}$ is zero, the problem can be only treated by Dijkstra's method.  Otherwise, $\delta_{\star} > 0 $ specifies the the largest bucket-width usable in Dial's method.)
With $m=2,$ the answer has a simple geometric interpretation.

\begin{proposition}[Maximum Allowable $\delta$ when $m = 2$]\label{cor:max_delta_m2}
Suppose that all non-deterministic actions in $A_i$ lead to $\{\z_1, \z_2\}$ and satisfy \eqref{eq:original_MC_cond} for $\delta = 0$.
Letting $\e_1, \e_2 \in A_i$ be the deterministic actions such that $p(\x_i, \e_j, \z_j) = 1,$ we will use the notation $C_j = C(\x_i, \e_j).$\\ 
The largest $\delta$ for which the MC conditions are satisfied is then $\delta_{\star} = \min( \delta_1, \delta_2 ),$ where
\begin{equation}
\label{eq:delta_restrict}
\delta_1 =  \min\limits_{\ba \in A_i \setminus \{e_2\}} \frac{C(\x_i, \ba) - p(\x_i, \ba, \z_2)C_2}{p(\x_i, \ba, \z_1)}
\qquad \text{and} \qquad
\delta_2  = \min\limits_{\ba \in A_i \setminus \{e_1\}} \frac{C(\x_i, \ba) - p(\x_i, \ba, \z_1)C_1}{p(\x_i, \ba, \z_2)} 
\end{equation}
Furthermore, if $A_i = \Xi_2$ while $C(\bxi) = C(\x_i, \bxi)$ is convex on $\Xi_2$
and differentiable at $\e_1=(1,0)$ and $\e_2=(0,1),$
then 
\begin{equation}
\label{eq:delta_restrict_convex}
\delta_1 = C(\e_2) + \frac{\partial C}{\partial \xi_1}(\e_2) - \frac{\partial C}{\partial \xi_2}(\e_2) 
\qquad \text{and} \qquad
\delta_2  = C(\e_1) - \frac{\partial C}{\partial \xi_1}(\e_1) + \frac{\partial C}{\partial \xi_2}(\e_1)
\end{equation}
\end{proposition}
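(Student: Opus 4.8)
The plan is to recognize \eqref{eq:original_MC_cond} with $m=2$ as a family of inequalities that are \emph{linear in} $\delta$ --- one for each pair $(\ba,r)$ with $\ba\in A_i$ and $r\in\{1,2\}$ --- solve each of them for $\delta$, and intersect the resulting half-lines. Under the standing assumption (as in Remark~\ref{rem:difference_in_MC_conditions}) that every action reaches only $\z_1$ and/or $\z_2$: for a non-deterministic $\ba$, setting $\xi_j = p(\x_i,\ba,\z_j)>0$, the instance of \eqref{eq:original_MC_cond} with $r=1$ reads $C(\x_i,\ba)\geq \xi_2 C_2 + \xi_1\delta$, i.e.\ $\delta \leq \big(C(\x_i,\ba) - p(\x_i,\ba,\z_2)C_2\big)/p(\x_i,\ba,\z_1)$, and the $r=2$ instance reads $\delta \leq \big(C(\x_i,\ba) - p(\x_i,\ba,\z_1)C_1\big)/p(\x_i,\ba,\z_2)$. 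For the deterministic action $\e_1$, the $r=2$ instance is the vacuous $C_1\geq C_1$ while the $r=1$ instance is $\delta\leq C_1$, which is exactly what the first quotient evaluates to at $\ba=\e_1$ (denominator $1$, the $C_2$-term vanishing); symmetrically for $\e_2$, and the endpoints excluded in \eqref{eq:delta_restrict} contribute only vacuous constraints. Hence the admissible set of $\delta$ is $[0,\delta_1]\cap[0,\delta_2]$, with supremum $\delta_{\star}=\min(\delta_1,\delta_2)$; and $\delta_1,\delta_2\geq 0$ because the $\delta=0$ instance of \eqref{eq:original_MC_cond} holds by hypothesis. (If one insists on checking the MC conditions only over $A_i^u$, the minima are unchanged: for any action the relevant quotient dominates that of a useful action lying on the lower convex envelope $\check{C}$ --- this is the pruning/convexification argument of the previous section.)

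For the convex, differentiable case I would take $A_i=\Xi_2$, parametrize $\bxi=(1-p,p)$ with $p\in[0,1]$, and set $K(p)=C(1-p,p)$, so that $C_1=K(0)$, $C_2=K(1)$, and $\e_1,\e_2$ correspond to $p=0,1$. Then $\delta_1 = \min_{p\in[0,1)}\frac{K(p)-pK(1)}{1-p}$, and writing $K(p)-pK(1) = \big(K(p)-K(1)\big) + (1-p)K(1)$ rewrites the quotient as $K(1) - \frac{K(1)-K(p)}{1-p}$. By the monotone-secant (three-slopes) property of convex functions, $p\mapsto\frac{K(1)-K(p)}{1-p}$ is non-decreasing on $[0,1)$, so this quotient is non-increasing and its infimum equals its limit as $p\to 1^-$, namely $K(1)-K'(1)$, the one-sided limit $\lim_{p\to1^-}\frac{K(1)-K(p)}{1-p}$ being $K'(1)$ by differentiability at $\e_2$. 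Symmetrically $\delta_2 = \min_{p\in(0,1]}\big(K(0) + \frac{K(p)-K(0)}{p}\big) = K(0)+K'(0)$. Finally the chain rule $K'(p) = -\frac{\partial C}{\partial\xi_1}(1-p,p) + \frac{\partial C}{\partial\xi_2}(1-p,p)$, evaluated at $p=1$ and $p=0$, turns these into exactly \eqref{eq:delta_restrict_convex}.

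The routine parts are the algebraic rearrangements and the sign bookkeeping in $K'$. The one step needing care --- and where convexity is genuinely used --- is the limiting argument in the second half: the extremal admissible $\delta$ is approached as the action degenerates to a deterministic one ($p\to0^+$ or $p\to1^-$) and is in general not attained at any individual action, so I must argue at the level of the constraint set ("$\delta\leq\frac{K(p)-pK(1)}{1-p}$ for all $p<1$" $\iff$ "$\delta\leq\inf_p(\cdot)=K(1)-K'(1)$") rather than by exhibiting a minimizer, and I must invoke differentiability precisely to identify that one-sided limit with $K'(1)$. Without convexity the infimum of $\frac{K(p)-pK(1)}{1-p}$ need not be governed by the endpoint behavior, and the clean closed form breaks down.
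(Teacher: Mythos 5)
Your proposal is correct and follows essentially the same route as the paper: the first half is the same rearrangement of \eqref{eq:original_MC_cond} as a linear constraint on $\delta$ for each $(\ba,r)$, and the second half is the same convexity argument, with your monotone-secant/infimum phrasing being equivalent to the paper's tangent-line criterion $K'(1)\leq K(1)-\delta$. If anything, your explicit remark that in the continuous case the extremal $\delta$ is only approached as $p\to 0^+$ or $p\to 1^-$ (so the ``min'' in \eqref{eq:delta_restrict} is really an infimum over the non-compact set $A_i\setminus\{\e_j\}$) is slightly more careful than the paper's brief appeal to attainment of the minima.
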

\begin{proof}{Proof:}
Since \eqref{eq:original_MC_cond} already holds with $\delta = 0$ for all non-deterministic $\ba \in A_i,$
the ratios minimized in \ref{eq:delta_restrict} are always non-negative.
Based on assumptions {\bf (A1)}-{\bf (A3)}, {\bf (A5')}, the minima are attained  and guarantee
that \eqref{eq:original_MC_cond} is satisfied for each $r=1,2$ with $\delta = \delta_r.$ Figure \ref{fig:max_deltas}(a) illustrates this idea and the restriction lines corresponding to $\delta_{\star}$ for an example of a monotone causal OSSP with $|A_i| = 6$.

If $A_i = \Xi_2,$ each action can be again identified with its probability distribution $\bxi=
(1-p,p)$ for $p\in[0,1].$
If $C(\bxi)$ is convex then so is $K(p) = C(1-p,p).$ 
So, the inequality \eqref{eq:original_MC_cond} for $r=1$ can be rewritten as
$K(p) \geq \delta + p (K(1) - \delta),$ which is equivalent to $K'(1) \leq K(1) - \delta$ due to convexity.
(I.e., this is the requirement for the tangent at $p=1$ to lie no lower than the $\delta$-dependent restriction line; see the solid green curve in Figure \ref{fig:max_deltas}(b).)
Thus, the largest $\delta$ that can guarantee this is $\delta_1 =  K(1) - K'(1).$  The same argument for $r=2$ results in 
$\delta_2 =  K(0) + K'(0),$ and the chain rule yields \eqref{eq:delta_restrict_convex}. \Halmos
\end{proof}

\begin{figure}[hhh]
\centering
$
\arraycolsep=1pt\def\arraystretch{0.1}
\hspace*{-1mm}
\begin{array}{cc}
\includegraphics[width = 0.5\textwidth]{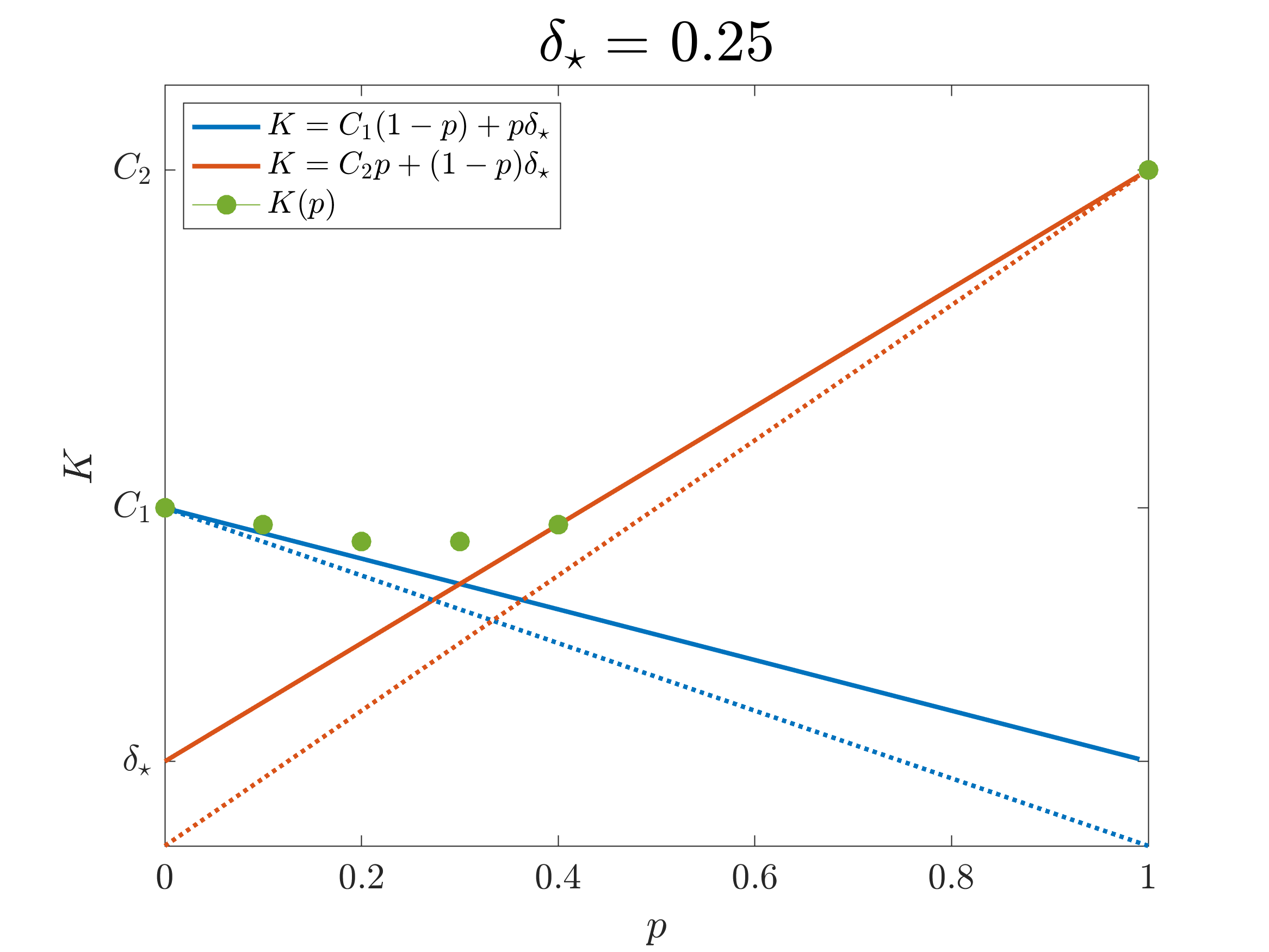} &
\includegraphics[width = 0.5\textwidth]{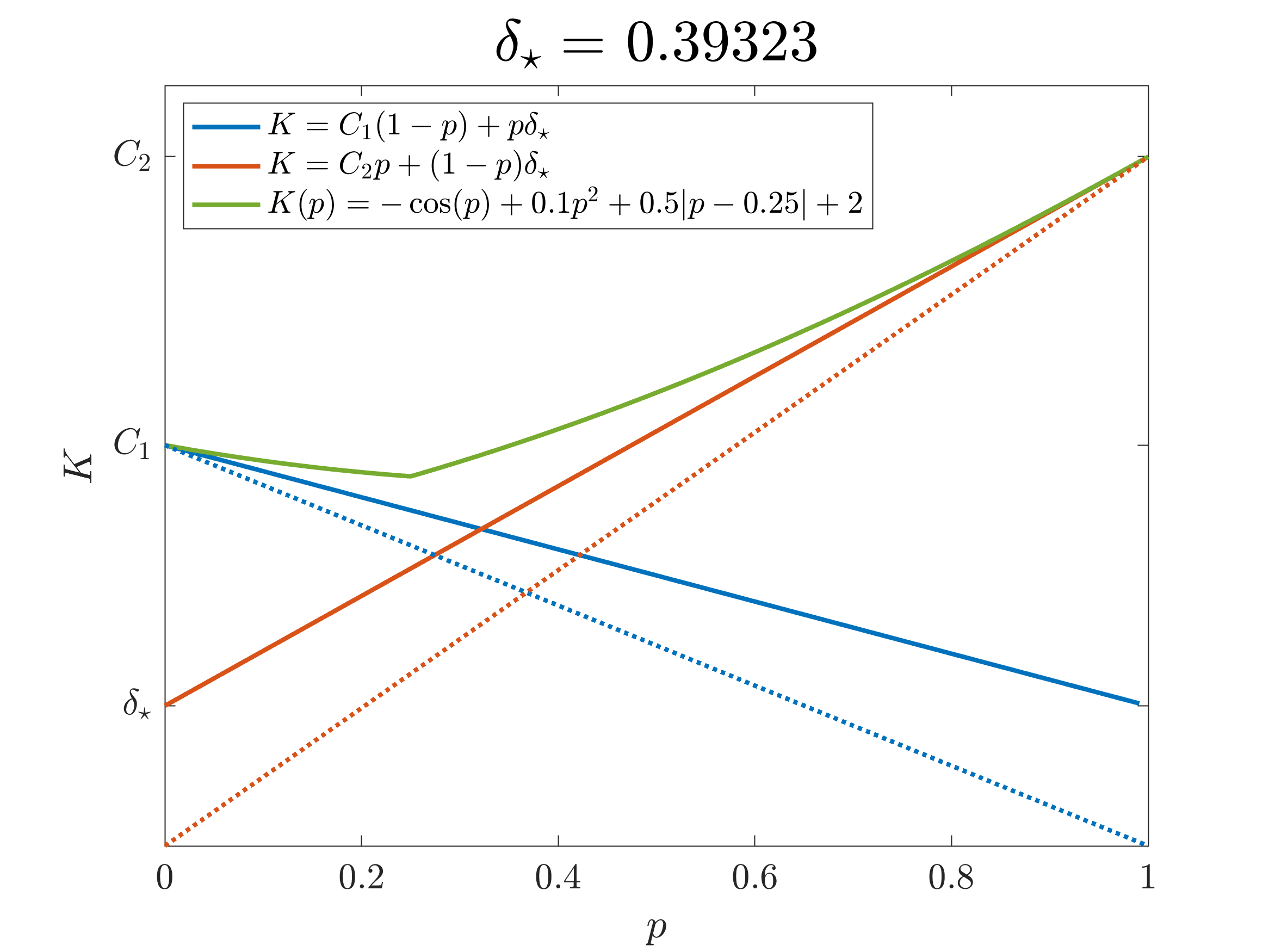} \\[3pt]
\mbox{\footnotesize(a)} & \mbox{\footnotesize(b)} 
\end{array}
$
\caption{Illustration of the geometric interpretation of $\delta_{\star}$ for monotone $\delta$-causality when $m = 2$ for panel (a): a monotone-causal OSSP where $|A_i|$ is finite and the transition cost function is piecewise-continuous and convex, and panel (b): a monotone-causal OSSP where $A_i = \Xi_2$ and the transition cost function is convex and continuous, yet not everywhere differentiable. In panel (a), $|A_i| = 6$ and $\delta_{\star}$ is easily computed via an application of \eqref{eq:delta_restrict}.  In panel (b), while $K(p)$ is not differentiable at $p = 0.25$, it is differentiable at $p = 0$ and $p = 1$, so the conditions of Proposition \ref{cor:max_delta_m2} are still satisfied. Thus, $\delta_{\star}$ follows immediately from \eqref{eq:delta_restrict_convex}. \label{fig:max_deltas}}
\end{figure}

The ($\delta$-)MC criterion in Theorem \ref{theorem:gen_caus_condition_improved} 
is local (i.e., stated using the cost function and actions at each node separately) and does not require any information about the global topological structure of OSSP. 
As a result, it is sufficient but not necessary: for some OSSPs, a label-setting method might compute the value function correctly even if the conditions of Theorem \ref{theorem:gen_caus_condition_improved} are violated.  But these conditions are ``sharp'' in the sense that, whenever they are violated, there will exist a (possibly different) OSSP with the same local structure, for which label-setting methods would compute the value function incorrectly.

\begin{theorem}[Sharpness of MC Condition]\label{thm:mc_condition_is_sharp}
Suppose that the condition \eqref{eq:improved_MC_cond} of Theorem \ref{theorem:gen_caus_condition_improved} is violated for some 
specific $\delta \geq 0, \, \x_i \neq \bt,$   non-deterministic action $\ba,$ and $r \in \{1, ...., m=|\I(\ba)|\}.$  

Then there exists a non-monotone $\delta$-causal OSSP in which 
\begin{enumerate}
\item
$\ba \in A_i,$ 
with $\I(\ba) = \{\z_1, ..., \z_m\} \in X \setminus \{\x_i\}$ and $p(\x_i, \ba, \z_j) = \xi_j > 0$ for $j=1,...,m;$
\item
$\exists \tilde{\ba} \in A_i,$ with $\I(\tilde{\ba}) = \I(\ba) \setminus \{\z_r\}, \, C(\x_i, \tilde{\ba})  = \check{C}(\bgamma_r),$
and\\ 
$
p(\x_i, \tilde{\ba}, \z_j) = \gamma_{r,j} = 
\begin{cases}
	0, & \text{ if } j =r\\
	\xi_j / (1-\xi_r), & \text{ otherwise} 
\end{cases}
$
\qquad \qquad
for $j=1,...,m;$
\item
\begin{equation}
\label{eq:cond_violated}
C(\x_i, \ba) \; < \; (1-\xi_r) C(\x_i, \tilde{\ba}) + \xi_r \delta;
\end{equation}
\item
$\ba$ is the unique optimal action at $\x_i$;
\item
$U(\x_i) \; < \; U(\z_r) + \delta.$
\end{enumerate}
For $\delta = 0,$ this implies that any label-setting method will produce a wrong answer for this OSSP.
($U_i$ will be computed incorrectly since $\z_r$ will be still in $L$ by the time $V_i$ is last updated.) 
For $\delta > 0,$ this means that a wrong answer would be produced by Dial's method with the bucket-width $\delta.$
\end{theorem}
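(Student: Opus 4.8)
The plan is to prove this by an explicit construction: given the data $\delta, \x_i, \ba, r$ for which \eqref{eq:improved_MC_cond} fails, I would build a small OSSP realizing items 1--5. Take the node set $\{\x_i, \z_1, \dots, \z_m, \bt\}$ with $m = |\I(\ba)|$, all distinct. At $\x_i$ put the action set $A_i = \{\ba, \tilde\ba, \e_1, \dots, \e_m\}$, where $\ba$ has the prescribed distribution $\bxi$ and cost $C(\x_i,\ba)$; $\tilde\ba$ has the distribution $\bgamma_r$ (so $p(\x_i,\tilde\ba,\z_r)=0$) and cost $\check{C}(\bgamma_r)$; and each $\e_j$ transitions deterministically to $\z_j$ with a cost $C_j$ to be chosen large. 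Each $\z_j$ gets a single deterministic action to $\bt$ (or a short chain) realizing a value $U(\z_j) = d_j > 0$, to be chosen. First I would check this is a legitimate OSSP: (A1)--(A3) and (A6) are immediate from finiteness and the absence of self-loops, (A4) holds via the proper policy ``use $\e_1$, then go to $\bt$'', (A5') holds once the chosen costs are positive (note $C(\x_i,\ba), \check{C}(\bgamma_r) > 0$ already), and \eqref{eq:ossp_def} holds because every $\z_j$ is reached deterministically by $\e_j$.

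Next I would verify the two ``structural'' items. For item 2 (that $C(\x_i,\tilde\ba) = \check{C}(\bgamma_r)$ in the \emph{new} OSSP): $\tilde\ba$ itself is a relaxed action transition-equivalent to $\bgamma_r$, so $\check{C}(\bgamma_r) \le C(\x_i,\tilde\ba)$; conversely, since $\ba$ and $\e_r$ are the only actions placing mass on $\z_r$ while $\bgamma_r$ places none, every relaxed action transition-equivalent to $\bgamma_r$ is a mixture of $\tilde\ba$ and $\{\e_j : j\ne r\}$, and choosing the $C_j$ large enough that $\sum_{j\ne r}\gamma_{r,j}C_j \ge C(\x_i,\tilde\ba)$ makes every such mixture cost at least $C(\x_i,\tilde\ba)$. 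Item 3 is then exactly the assumed negation of \eqref{eq:improved_MC_cond}.

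The crux is the choice of the $d_j$'s so that items 4 and 5 hold. With $C_j$ large, $\ba$ beats every $\e_j$ at $\x_i$, so $\ba$ is the optimal pure action iff it also beats $\tilde\ba$, i.e., iff $C(\x_i,\ba) + \sum_j \xi_j d_j < \check{C}(\bgamma_r) + \sum_{j\ne r}\gamma_{r,j} d_j$; by \eqref{eq:best_pure_action} this makes $\ba$ optimal among relaxed actions as well, giving item 4 and $U(\x_i) = C(\x_i,\ba) + \sum_j\xi_j d_j$. The MC-failure (item 5) is $U(\x_i) < d_r + \delta$. Writing $S = \sum_{j\ne r}\xi_j d_j$, these two requirements turn into a lower and an upper bound on $d_r$, and an elementary computation shows the resulting interval for $d_r$ (intersected with $d_r > 0$) is nonempty precisely when $C(\x_i,\ba) < (1-\xi_r)\check{C}(\bgamma_r) + \xi_r\delta$ -- exactly our hypothesis. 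This equivalence is what makes the condition ``sharp.'' Picking such a $d_r$ (and distributing $S$ among the $d_j$, $j\ne r$, as positive numbers) finishes the construction; since $\ba$ is the unique optimal action at $\x_i$, assigns positive probability $\xi_r$ to $\z_r$, and $U(\x_i) < U(\z_r) + \delta$, no optimal stationary policy is consistently $(\delta\text{-})$improving, so the OSSP is not monotone $(\delta\text{-})$causal.

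For the label-setting failure: when $\delta = 0$, since $U(\x_i) < U(\z_r)$ any label-setting method finalizes $\x_i$ while $\z_r$ is still tentative, so the only action of $\x_i$ attaining its true value, $\ba$, is unusable (its successor $\z_r$ is not yet in $K$) and $V_i$ is locked in strictly above $U(\x_i)$ by uniqueness of $\ba$. When $\delta > 0$, I would exploit the remaining freedom in the $d_j$'s: keep the $d_j$, $j\ne r$, (nearly) equal, and use that the strict hypothesis forces the spread between $U(\z_r)$, the $U(\z_j)$'s, and $\x_i$'s premature tentative value $\check{C}(\bgamma_r) + \sum_{j\ne r}\gamma_{r,j} d_j$ to be below $\delta$, so that, after adjusting the overall offset modulo $\delta$, all of these values sit in one width-$\delta$ bucket; then Dial's method with bucket-width $\delta$ finalizes $\x_i$ within that bucket at its premature value before $\z_r$ is moved to $K$. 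The step I expect to be the main obstacle is exactly this $\delta > 0$ / Dial part: one must satisfy all the strict parameter inequalities at once (the strictness of the hypothesis is used twice -- for feasibility of $d_r$ and to push the relevant span below $\delta$) \emph{and} do the bucket bookkeeping so that Dial genuinely processes $\x_i$ before $\z_r$; the regime $\check{C}(\bgamma_r) < \delta$ is the delicate sub-case.
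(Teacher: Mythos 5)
Your construction is essentially the paper's: the same minimal OSSP (each $\z_j$ given a freely prescribable value via a single deterministic action to $\bt$, the action set $\{\ba,\tilde{\ba},\e_1,\dots,\e_m\}$ at $\x_i$ with the $\e_j$-costs taken large), with items 4 and 5 reduced to a two-sided constraint on one free parameter --- your $d_r$ (with $S$ fixed) is just a reparametrization of the paper's quantity $B = U(\z_r) - \sum_{j\neq r}\gamma_{r,j}U(\z_j)$, and in both cases feasibility of the resulting interval is shown to be exactly equivalent to the violated inequality \eqref{eq:cond_violated}. Your additional care about verifying $\check{C}(\bgamma_r) = C(\x_i,\tilde{\ba})$ inside the new instance and about the bucket placement for the $\delta>0$ Dial claim goes beyond what the paper writes down (it simply asserts the Dial failure), but your sketch of that step --- using the strictness of \eqref{eq:cond_violated} to force the premature tentative value of $\x_i$ and $U(\z_r)$ within $\delta$ of each other and then shifting the offset so they share a bucket --- is the correct way to close it.
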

\begin{proof}{Proof:}
To construct the simplest such OSSP example, we will assume that
there is only one action $\hat{\ba}$ available at every
$\x \not \in  
\{\x_i, \bt\},$  and that $\hat{\ba}$
leads to an immediate termination deterministically;  i.e., $p(\x, \hat{\ba}, \bt) = 1.$ 
This essentially allows us to prescribe any values to $\x \in X \backslash \{\x_i, \bt\}$ since $U(\x) = C(\x, \hat{\ba}).$ 
(The resulting OSSP will be thus explicitly causal, but this does not imply the monotone causality.)
At $\x_i,$ we will consider the minimal relevant set of actions $A_i =  \{\ba, \tilde{\ba}, \e_1, ..., \e_m\},$
where $p(\x_i, \e_j, \z_j) = 1,$  with costs $C(\x_i, \ba)$ and  $C(\x_i, \tilde{\ba})$ already prescribed above
and satisfying  \eqref{eq:cond_violated}, while all $C(\x_i, \e_j)$ costs will be prescribed later.

The key quantity of interest in this proof is $B = U(\z_r)  - \sum\limits^{m}_{j = 1, j \neq r} \gamma_{r,j} U(\z_j).$
To ensure that $\ba$ is optimal and $\tilde{\ba}$ is not, we must have 
$$
U(\x_i) = C(\x_i, \ba ) + \sum\limits^{m}_{j = 1} \xi_j U(\z_j) \; < \; C(\x_i, \tilde{\ba} ) + \sum\limits^{m}_{j = 1, j \neq r} \gamma_{r,j} U(\z_j),
$$
which is equivalent to $B < \overline{B} = \frac{C(\x_i, \tilde{\ba}) - C(\x_i, \ba)}{\xi_r}.$
The condition $U(\x_i) \; < \; U(\z_r) + \delta$ is equivalent to $B > \underline{B} = \frac{C(\x_i, \ba) - \delta}{1-\xi_r}.$
These bounds on $B$ can be 
satisfied simultaneously   
since \eqref{eq:cond_violated} is equivalent to $\underline{B} < \overline{B}.$
E.g., we can choose 
$U(\z_r) = C(\z_r, \hat{\ba}) = 2 \delta + (\underline{B} + \overline{B})/2$ and 
$U(\z_j) = C(\z_j, \hat{\ba}) = 2 \delta$ for all $j \neq r.$ 
Finally, the non-optimality of all $\e_j$'s (i.e., $U(\x_i) \; < \; C(\x_j,\e_j) + U(\z_j), \, \forall j \neq r$) is easy to ensure by selecting 
$C(\x_j,\e_j)$ to be sufficiently large once all $U$ values are already selected.
\Halmos
\end{proof}

 \vspace*{2mm}

To illustrate the connection to causality criteria previously derived for MSSPs, we now turn 
to homogeneous transition cost functions, which often arise naturally in applications.
Recall that $H:\R^m \mapsto \R$ is \emph{homogeneous of degree d} if  $H(\beta \bq) = |\beta|^d H(\bq)$ holds for all $\beta \in \R, \bq = (q_1, ...., q_m) \in \R^m$.
Further, when $H(\bq)$ is smooth, Euler's Homogeneous Function Theorem applies and $H(\bq) = \frac{1}{d}\sum^{m}_{j = 1}\ q_j \frac{\partial H}{\partial q_j}(\bq), \, \forall \bq \in \R^m.$

Focusing on a single node $\x$ and one of its modes $\m = \{\z^{\m}_1, \dots, \z^{\m}_{|\m|}\} \subset X \backslash \{\x\},$ in MSSPs one can choose any $\bxi \in \Xi_{|\m|}$ 
for the desired probability distribution over $\m.$ The corresponding cost $C^{\m}(\x, \bxi)$ can be separately defined for each $\x$ and $\m \in \M(\x);$ when they are clear from context,
it is convenient to abbreviate the notation to $C(\bxi)$ and $m = |\m|.$
As shown in \cite[Theorem 3.2]{vladimirsky2008label}, an MSSP is guaranteed to be monotone $\delta$-causal if, in its every mode, $C(\bxi)$ is continuously differentiable, homogeneous of degree $d$ and satisfies
\begin{equation}
\label{eq:MSSP_MC}
\xi_j > 0 
\quad \implies \quad
\frac{\partial C}{\partial \xi_j} (\bxi) - (d-1)C(\bxi)  \; > \; \delta \geq 0, \qquad
\forall \bxi \in \Xi_{m}, \, j \in \{1,...,m\}.
\end{equation}
Note that this covers the example 
$
C(\bxi) = \sqrt{\xi_1^2 + \xi_2^2 + \xi_3^2}
$
over $\Xi_3$ already considered in Remark \ref{rem:difference_in_MC_conditions}.
This $C(\bxi)$ is homogeneous of degree one with $\frac{\partial C}{\partial \xi_i} (\bxi) = \xi_i / \sqrt{\xi_1^2 + \xi_2^2 + \xi_3^2} > 0$ whenever $\xi_i > 0.$
Thus, by \cite[Theorem 3.2]{vladimirsky2008label}, this $C$ is monotone causal (with $\delta = 0$),
which can be also concluded from our Theorem \ref{theorem:gen_caus_condition_improved}.  

Below we show that, for $m=2$, the condition \eqref{eq:MSSP_MC} implies our \eqref{eq:original_MC_cond}.

\begin{proposition}[Monotone $\delta$-Causality for Homogeneous $C(\bxi)$ when $m = 2$]\label{theorem:3pt2_m2_connection}
\mbox{}\\
Suppose $m = 2$, and consider a transition cost function $C(\bxi)$ which is convex, continuously differentiable, homogeneous of degree $d$, and satisfying
\eqref{eq:MSSP_MC}.
Then it also satisfies 
\begin{equation} \label{eq:simplified_mc_in_2D}
C(\bxi) \; \geq \; \max( C_1 \xi_1 + \delta \xi_2, \, \delta \xi_1 + C_2 \xi_2) \quad \forall \bxi \in \Xi_2, 
\end{equation}
where $C_1$ and $C_2$ are the shorthands for the cost of deterministic actions $\e_1 = (1,0)$ and $\e_2 = (0,1).$ 
\end{proposition}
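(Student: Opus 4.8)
The plan is to reduce the $m=2$ claim to a one–dimensional statement about the convex function obtained by restricting $C$ to the segment $\Xi_2$, and to read off the two needed endpoint–derivative inequalities from \eqref{eq:MSSP_MC} together with Euler's identity. Parametrize $\bxi = (1-p,p)$ for $p \in [0,1]$ and set $K(p) = C(1-p,p)$. Since $C$ is convex on $\Xi_2$ and $p \mapsto (1-p,p)$ is affine, $K$ is convex; since $C$ is $C^1$ on a neighborhood of the segment (all of whose points are nonzero), $K \in C^1([0,1])$, with $K(0) = C(\e_1) = C_1$ and $K(1) = C(\e_2) = C_2$. In this notation, \eqref{eq:simplified_mc_in_2D} says exactly that $K(p) \ge (1-p)C_1 + \delta p$ and $K(p) \ge \delta(1-p) + C_2 p$ for all $p \in [0,1]$; by the symmetry $\z_1 \leftrightarrow \z_2$ it suffices to establish the first. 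For this, let $\phi(p) = K(p) - [(1-p)C_1 + \delta p]$, which is convex, $C^1$, and zero at $p = 0$; if $\phi'(0) \ge 0$ then $\phi' \ge \phi'(0) \ge 0$ on $[0,1]$ by convexity, so $\phi$ is nondecreasing and hence nonnegative — the same mechanism already used in the proof of Proposition \ref{cor:max_delta_m2}.

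Next I would compute $\phi'(0)$ and match it to \eqref{eq:MSSP_MC}. Differentiating, $K'(p) = \partial_2 C(1-p,p) - \partial_1 C(1-p,p)$, so $\phi'(0) = \partial_2 C(\e_1) - \partial_1 C(\e_1) - \delta + C_1$. By homogeneity of degree $d$, Euler's identity at $\e_1 = (1,0)$ gives $\partial_1 C(\e_1) = d\,C(\e_1) = dC_1$, hence $\phi'(0) = \partial_2 C(\e_1) - (d-1)C_1 - \delta$. So the entire statement comes down to the two inequalities $\partial_2 C(\e_1) - (d-1)C_1 \ge \delta$ and (for the symmetric half) $\partial_1 C(\e_2) - (d-1)C_2 \ge \delta$.

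The one delicate point — and the main obstacle — is that \eqref{eq:MSSP_MC} asserts $\partial_j C(\bxi) - (d-1)C(\bxi) > \delta$ only where $\xi_j > 0$, so it cannot be applied directly at the vertex $\e_1$ (there $\xi_2 = 0$). I would instead apply it with $j = 2$ along $\bxi = (1-p,p)$ for $p \in (0,1]$, obtaining $\partial_2 C(1-p,p) - (d-1)C(1-p,p) > \delta$, and then let $p \to 0^+$; continuity of $\nabla C$ and of $C$ — guaranteed by the $C^1$ hypothesis — yields $\partial_2 C(\e_1) - (d-1)C_1 \ge \delta$ in the limit (the strict inequality relaxing to a weak one is harmless). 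The limit $p \to 1^-$ with $j = 1$ gives the symmetric inequality, and together with the previous two paragraphs this proves \eqref{eq:simplified_mc_in_2D}. The roles are thus cleanly split: $C^1$ regularity is what lets us pass from the relative interior of a face to the closed simplex, while convexity is what upgrades a single endpoint-slope inequality to the global linear lower bounds.
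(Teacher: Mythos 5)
Your proof is correct and rests on the same ingredients as the paper's: the reduction to the convex one-variable function $K(p)=C(1-p,p)$, the translation of each linear lower bound in \eqref{eq:simplified_mc_in_2D} into an endpoint-slope condition via convexity, Euler's identity at the vertices, and a continuity argument to bridge the fact that \eqref{eq:MSSP_MC} is only assumed where $\xi_j>0$. The only difference is that you argue directly (endpoint derivative bounds $\Rightarrow$ global inequality) while the paper runs the identical chain as a proof by contradiction, so this is essentially the same approach.
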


\begin{proof}{Proof: }
Define $K: [0,1] \mapsto \R$ by setting $K(p) = C(1-p, p),$ as in Figure \ref{fig:mc_geometric_illustration}.
Arguing by contradiction, assume that, for some  $\bar{\bxi} = (1- \bar{p},\, \bar{p}),$
we have 
$  
C(\bar{\bxi}) \, < \, \delta \bar{\xi}_1 + C_2 \bar{\xi}_2
$
or, equivalently,
$
K(\bar{p}) \, < \, \delta + \bar{p} (K(1) - \delta).
$
By the convexity of $K,$ this inequality must also hold for all $p \in [\bar{p}, 1)$
and implies $K'(1) > K(1) - \delta.$ (Illustrated by the solid purple curve in Figure \ref{fig:mc_geometric_illustration}(a).)
From the chain rule, this is equivalent to 
$$
- \frac{\partial C}{\partial \xi_1}(\e_2) +  \frac{\partial C}{\partial \xi_2}(\e_2) \; > \;
C(\e_2) - \delta.
$$
By Euler's Homogeneous Function Theorem, $d C(\e_2) = \frac{\partial C}{\partial \xi_2}(\e_2);$
thus,
$$
\frac{\partial C}{\partial \xi_1}(\e_2) \; < \; (d-1) C(\e_2) + \delta.
$$
Since $C$ is continuously differentiable, this inequality also holds for $\bxi \in \Xi_2$ sufficiently close to $\e_2$,
which contradicts \eqref{eq:MSSP_MC}.

If we instead assume
$ 
C(\bar{\bxi}) \, < \, C_1 \bar{\xi}_1 + \delta \bar{\xi}_2$
(or, equivalently, 
$K(\bar{p}) \, < \, K(0) + \bar{p} (\delta - K(0))$),
a similar argument leads to $K'(0)< \delta - K(0),$ yielding the same contradiction for $\bxi \in \Xi_2$ near $\e_1.$  
\Halmos
\end{proof}
\section{OSSPs and Continuous Optimal Control Problems}\label{section:ossp_and_ahj}

\subsection{Background, prior work, and notation}
SSPs are also quite useful in approximating solutions of stationary Hamilton-Jacobi-Bellman (HJB) partial differential equations (PDEs), which arise naturally in areas as diverse as robotics, anomaly detection, optimal control, computational geometry, image registration, photolithography, and exploration geophysics \cite{SethBook2}. We will focus on a simple example of a time-optimal control
problem in an open domain $\domain \subset \R^m.$  With $\y(t)$ encoding the position at the time $t$, the dynamics is prescribed by
$\y'(t) = f(\y(t), \ba(t))\ba(t),$ where $\ba(t) \in S_{m-1} = \{\ba \in \R^m \mid |\ba| = 1\}$ is a unit vector describing our chosen direction of motion, while the speed 
$f: \domain \times \ S_{m-1} \mapsto \R_+$ generally depends both on the position and the direction.
We will generally assume that $f$ is Lipschitz continuous and will leverage the properties of the corresponding {\em speed profile} 
$\Vf(\x) = \left\{ f(\x, \ba) \ba \, \mid \,  \ba \in S_{m-1} \right\},$ which describes all velocities achievable at $\x.$
It is contained in a spherical shell with the inner and outer radii 
$F_1(\x) = \min_{\ba} f(\x, \ba)$
and $F_2(\x) = \max_{\ba} f(\x, \ba).$
We will further assume that $\exists F_1, F_2 \in \R$ such that $0 < F_1 \leq F_1(\x) \leq F_2(\x) < F_2$ holds for all $\x \in \domain.$
In addition to the $\ba(\cdot)$-dependent time-to-$\boundary,$ we also incur a Lipschitz continuous exit-penalty $q : \boundary \mapsto \R_{+,0}.$
The value function $u(\x)$ encodes the optimal overall time to exit starting from $\x,$  formally defined by taking the infimum over all measurable controls $\ba(\cdot).$
Our assumptions guarantee that $u$ will be Lipschitz-continuous on  $\domain$; moreover, if it is also smooth, 
a standard argument shows that $u$ must also satisfy
the HJB equation 
\begin{equation}\label{eqn:stationary_hj}
\begin{split}
H(\nabla u(\x), \x) &= \min\limits_{\ba \in S_{m-1}} \{(\nabla u(\x) \cdot \ba)f(\x, \ba)\} + 1  = 0, \hspace{0.3cm} \x \in \domain \\
 u(\x) & = q(\x), \hspace{0.3cm} \x \in \boundary.
\end{split}
\end{equation}
More generally, even if $u$ is not smooth, it can be recovered as a unique {\em viscosity solution} of this PDE \cite{bardi1997optimal}.
If we further assume that $\Vf(\x)$ is convex for all $\x \in \Omega$, this guarantees the existence of time-optimal trajectories, which 
are encoded by the characteristic curves of \eqref{eqn:stationary_hj}.

A frequently encountered subclass of problems has isotropic dynamics (i.e.,  with $f(\x, \ba) = f(\x)$) and allows for further simplification.
In this case, $\Vf(\x)$ is always just a ball, the optimal direction is known to be $\ba_* = -\nabla u /  |\nabla u|,$ the optimal trajectories 
coincide with the gradient lines of $u,$ and \eqref{eqn:stationary_hj} reduces to an {\em Eikonal equation}
$ | \nabla u(\x) | f(\x) = 1$.  

Efficient numerical methods for both the Eikonal equation and the general (anisotropic) HJB \eqref{eqn:stationary_hj} have been an active area of research in the last 30 years.
Many of such methods start with a semi-Lagrangian discretization \cite{GonzalezRofman, Falcone_book} on a grid or simplicial mesh $X$ over $\cdomain.$
Suppose $\Scal(\x)$ is the set of simplexes used to build a computational stencil at $\x,$ with each $s \in \Scal(\x)$ having vertices $\x, \z^s_1, ..., \z^s_m$.
(For $m=2$ and a Cartesian grid, two examples of such stencils are shown in Figure \ref{fig:4pt_and_8pt_stencils}.)
\begin{figure*}[t]
\centering
\includegraphics[width = 0.7\textwidth]{figs/f_and_causality/4pt_8pt.tikz} 
\caption{Two computational stencils based on a uniform Cartesian grid in $\R^2$.
The value function
is computed at $\x$, and $\tilde{\x}_{\bxi}$ is the
new position after traveling in a straight line along the chosen direction of motion $\ba_{\bxi}$ with speed $f(\x, \ba_{\bxi})$. 
In panel (a), the modes are essentially quadrants: $\Scal(\x) = \{ (\x_1, \x_3), (\x_3, \x_5), (\x_5, \x_7), (\x_7, \x_1) \}.$  In panel (b), each quadrant is split into two modes: 
$\Scal(\x) = \{ (\x_1, \x_2), (\x_2, \x_3), (\x_3, \x_4), (\x_4, \x_5), (\x_5, \x_6), (\x_6, \x_7), (\x_7, \x_8), (\x_8, \x_1) \}.$
\label{fig:4pt_and_8pt_stencils}}
\end{figure*}
For a chosen $s,$ any $\bxi \in \Xi_m$ defines a ``waypoint'' $\xtilde^{}_{\bxi} = \sum_{j} \xi_j \z^s_j$ on the opposite face of the simplex  and 
the corresponding direction of motion $\ba_{\bxi} = (\xtilde_{\bxi} - \x) / |\xtilde_{\bxi} - \x|.$
Assuming we choose that direction and follow it from $\x$ to $\xtilde_{\bxi},$  the time it takes will be approximately $|\xtilde_{\bxi} - \x| / f(\x, \ba_{\bxi})$ and
the remaining minimal time from there on can be approximated by linear interpolation as $u(\xtilde_{\bxi}) \approx \sum_{j} \xi_j u(\z^s_j).$ 
Using $U(\x)$ for the function approximating the true solution $u$ at meshpoints/gridpoints, this yields the following first-order accurate semi-Lagrangian discretization:
\begin{align}\label{eqn:sl_scheme_hj}
U(\x) & = \; \min\limits_{s \in \Scal} 
\min\limits_{\bxi \in \Xi_m}
\left\{ \frac{| \xtilde_{\bxi} - \x |}{f(\x, \ba_{\bxi})} \, + \, \sum\limits_{j=1}^m \xi_j U(\z^s_j)  \right\},  & \qquad \forall \x \in X \cap \domain;\\
\nonumber
U(\x) & = \; q(\x), & \qquad \forall \x \in X \cap \boundary.
\end{align}
Assuming that $\boundary$ is well resolved by a family of computational grids $\{X^h\}$, it is well-known that $U^h \rightarrow u$ uniformly under grid refinement
[i.e., as $|X^h| \rightarrow +\infty$ with $\left( \max_{\x \in X^h} \max_{s \in \Scal(\x)} \max_j |\z^s_j - \x| \right) \rightarrow 0$].  
In the approach pioneered by Kushner and Dupuis \cite{KushnerDupuis}, the same system of equations can be obtained by choosing a suitable controlled Markov process on $X$.
Indeed, taking $C^s(\x, \bxi) = |\xtilde_{\bxi} - \x| / f(\x, \ba_{\bxi}),$ it is easy to see that the above is actually an MSSP defined in \eqref{eq:MSSP}
with modes $s = \{\z^s_1, ..., \z^s_m\}.$  The nodes $\x \in \boundary$ are assumed to have a single deterministic action only, leading to $\bt$ at the cost $q(\x).$
In a yet another approach \cite{SethVlad3}, the same system can be also derived through finite differences, approximating directional derivatives of $u$ for each $(\z^s_j - \x)$.
Since the discretized system \eqref{eqn:sl_scheme_hj} is nonlinear and coupled, it has been traditionally treated with value iterations, but there was always a significant interest in speeding them up, often mirroring the label-correcting and label-setting methods on graphs.  Here we focus on the latter, referring to \cite{ChacVlad1, ChacVlad2} for a review of the former, the hybrid versions, and parallelization.  

The key label-setting ideas were first developed in the isotropic case.  
If $C^s(\x, \ba_{\bxi})$ is monotone \mbox{($\delta$-)causal} for each $s,$ 
the system \eqref{eqn:sl_scheme_hj} may be solved using noniterative label-setting methods. 
This was first done by Tsitsiklis \cite{tsitsiklis1994efficient, tsitsiklis1995efficient}, who showed that any Eikonal discretized in a semi-Lagrangian way on the stencil in Figure  \ref{fig:4pt_and_8pt_stencils}(a) (or its $m$-dimensional version) can be solved by Dijkstra's method.
Sethian independently derived the same result in an Eulerian (finite-difference) framework. Working in the context of isotropic front propagation problems, he introduced a Dijkstra-like Fast Marching Method \cite{sethian1996fast},
which was soon generalized to handle higher-order accurate discretizations \cite{sethian1999fast} and arbitrary simplicial meshes both in $\R^m$ and on manifolds \cite{kimmel1998fast, sethian2000fast}.
Tsitsiklis also showed that Dial's method with bucket width $\delta = \frac{h}{F_2 \sqrt{2}}$ is applicable to an Eikonal discretized on the stencil in  Figure  \ref{fig:4pt_and_8pt_stencils}(b)
while its 3D version can be treated with the bucket width $\delta = \frac{h}{F_2 \sqrt{3}}.$  Another Dial-like approach was defined in \cite{KimGMM}.  In all these cases, the proofs have heavily relied on  specific discretization approaches and stencil choices.
The MSSPs introduced in \cite{vladimirsky2008label} have later provided a unifying framework, in which it is easy to prove all related $\delta$-MC conditions 
based on the properties of $C^s(\x, \bxi).$

Building label-setting methods to correctly solve the system \eqref{eqn:sl_scheme_hj} in the anisotropic setting is significantly more challenging since characteristics of the HJB PDE no longer coincide with the gradient lines of $u.$  Aside from the small set of problems with ``grid-aligned anisotropy''
\cite{osher2001level, alton2009fast},
the causality is generally not guaranteed on usual stencils of the type shown in Figure \ref{fig:4pt_and_8pt_stencils}.
Sethian and Vladimirsky have handled this problem by introducing Dijkstra-like Ordered Upwind Methods (OUMs), in which the stencil is extended dynamically (at run-time) just enough to ensure the monotone causality \cite{sethian2001ordered}. 
In OUMs, the range of stencil extension is governed by an anisotropy coefficient\footnote{
The anisotropy coefficient can be used to bound from above the angle between $-\nabla u (\x)$ and the optimal direction of motion $\ba_*(\x).$  
This is why the stencil extension proportional to $\Upsilon(\x)$ is sufficient to ensure the monotone causality.
However, later OUMs were also extended to approximate the solutions of HJB PDEs in which both $F_2(\x)$ and $\Upsilon(\x)$ might be infinite \cite{cameron2012}.}     
 $\Upsilon(\x)  = F_2(\x)/F_1(\x),$ which is equal to one (no stencil extension) in the isotropic case.
The resulting algorithms are indifferent to the exact speed profile, using the same dynamic stencil extension for all $\Vf(\x)$ that share the same $F_1(\x)$ and $F_2(\x).$   
This yields the computational cost of $O(\Upsilon^{m-1} n \log n),$ where $n= |X|$ and $\Upsilon = \max_{\x} \Upsilon(\x).$
Subsequently, a number of papers have aimed to reduce this stencil extension and pre-build a static causal stencil for each $\x$ based on the finer properties of $\Vf(\x).$ 
The first general anisotropic $\delta$-MC criterion was derived in \cite{vladimirsky2008label} and then extended to non-smooth $\Vf(\x)$ in \cite{alton2012ordered}.
Starting with a uniform Cartesian grid, Mirebeau showed how to build MC-stencils for elliptic and ellipsoidal speed profiles for $m= 2,3$ with 
the uniform bound on stencil cardinality $|\N(\x)|$ independent of $\Upsilon$ \cite{mirebeau2014anisotropic}.  A related approach was also developed to build reduced-cardinality MC-stencils for general (smooth and convex) speed profiles in 2D \cite{mirebeau2014efficient} and 3D \cite{desquilbet2021single}.
But in all of these cases, the MC criteria were based on the properties of the Hamiltonian $H,$ on implicit conditions for the derivatives  of $f$,
or on analytic properties of a function embedding $\Vf$ as a level set.
Below we show that the criteria for OSSPs developed in \S \ref{ss:OSSP_MC} yield surprisingly simple $\delta$-MC conditions for 2D and 3D stencils, 
using only the basic geometric properties of $\Vf.$

\textbf{Simplified Notation:} Throughout the remainder of this section, we make use of the following notational conventions. Without loss of generality, we assume that the coordinate system is centered at $\x=\bm{0}$ and will mostly focus on a single simplex $s \in \Scal(\x)$ with vertices $\bm{0}, \z_1, ...., \z_m.$  
The speed, velocity, and cost for the direction $\ba_{\bxi}=\tilde{\x}_{\bxi} / |\tilde{\x}_{\bxi}|$ will be denoted by 
$f(\bxi) = f(\x, \ba_{\bxi}), \, \bv(\bxi) = f(\bxi) \ba_{\bxi},$ and 
$C(\bxi) = C^s(\bxi) =  |\tilde{\x}_{\bxi}| / f(\bxi)$ respectively.
A similar notation ($f_j, \bv_j,$ and $C_j = |\z_j| / f_j$) will also be used for the speed, velocity, and cost for each direction $\z_j / |\z_j|$
toward the corresponding vertex.
We will use $\Vf_s = \{ \bv \in \Vf \, \mid \, \bv = \sum_{j=1}^m \theta_j \bv_j, \; \text{all } \theta_j \geq 0 \}$ to denote  
the part of $\Vf$ falling into this simplex.

\begin{observation}
\label{thm:convex_profile}
If $\Vf(\x)$ is convex then the function $C(\bxi)$ is convex in every simplex $s \in \Scal(\x).$
\end{observation}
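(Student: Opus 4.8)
The plan is to exhibit $C^s(\cdot)$ on each simplex as the composition of a single convex function on $\R^m$ with a linear map, which immediately yields convexity. Write $\mathcal{W}(\x) \subset \R^m$ for the convex body whose boundary is the speed profile $\Vf(\x)$; such a body exists precisely because $\Vf(\x)$ is assumed to be a convex hypersurface, and the standing assumption $0 < F_1 \le F_1(\x) \le F_2(\x) < F_2$ guarantees that $\mathcal{W}(\x)$ is bounded and contains $\bm 0$ in its interior. The natural object to associate with it is the gauge (Minkowski functional)
\[
\rho(\bw) \;=\; \inf\{\, t > 0 \;:\; \bw \in t\,\mathcal{W}(\x) \,\}, \qquad \bw \in \R^m .
\]
Because $\mathcal{W}(\x)$ is convex with the origin in its interior, $\rho$ is a classical example of a finite, nonnegative, positively homogeneous, and --- crucially --- convex function on all of $\R^m$; I would simply invoke this standard fact of convex analysis.

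Next I would check the identity $C^s(\bxi) = \rho(\tilde{\x}_{\bxi})$ for every $\bxi \in \Xi_m$. Fix such a $\bxi$; since the simplex $s$ with vertices $\bm 0, \z_1, \ldots, \z_m$ is nondegenerate, $\bm 0$ is not a convex combination of the $\z_j$'s, so $\tilde{\x}_{\bxi} = \sum_j \xi_j \z_j \neq \bm 0$ and $\ba_{\bxi} = \tilde{\x}_{\bxi}/|\tilde{\x}_{\bxi}|$ is well defined. The ray emanating from $\bm 0$ in direction $\ba_{\bxi}$ meets $\partial \mathcal{W}(\x) = \Vf(\x)$ at the single point $\bv(\bxi) = f(\bxi)\,\ba_{\bxi}$, whose norm is the speed $f(\bxi)$. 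Hence $\tilde{\x}_{\bxi} \in t\,\mathcal{W}(\x)$ exactly when $|\tilde{\x}_{\bxi}| \le t\,f(\bxi)$, so $\rho(\tilde{\x}_{\bxi}) = |\tilde{\x}_{\bxi}|/f(\bxi)$, which is the definition of $C^s(\bxi)$ in the simplified notation above.

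Finally, the map $\bxi \mapsto \tilde{\x}_{\bxi} = \sum_j \xi_j \z_j$ is linear in $\bxi$, so $C^s = \rho \circ \bigl(\bxi \mapsto \sum_j \xi_j \z_j\bigr)$ is the composition of a convex function with an affine map and is therefore convex on $\Xi_m$; since this holds for every $s \in \Scal(\x)$, the observation follows. I expect the only delicate point to be the opening step: pinning down the meaning of ``$\Vf(\x)$ is convex'' (i.e.\ that it is the boundary of a convex body) and confirming the hypotheses under which $\rho$ is genuinely finite and convex --- but both reduce to the already-assumed two-sided speed bounds, and everything after that is routine.
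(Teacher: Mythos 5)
Your proof is correct, but it takes a different route from the paper's. The paper argues directly: for $\bphi = \xi_1\boldeta_1 + \xi_2\boldeta_2$ it writes the velocity $\bv(\bphi)$ as a nonnegative combination $\theta_1\bv(\boldeta_1) + \theta_2\bv(\boldeta_2)$, computes $\theta_i = \xi_i C(\boldeta_i)/C(\bphi)$, and then observes that convexity of $\Vf(\x)$ forces $\theta_1 + \theta_2 \ge 1$, which is exactly the desired inequality $\xi_1 C(\boldeta_1) + \xi_2 C(\boldeta_2) \ge C(\bphi)$. You instead identify $C^s(\bxi)$ as the Minkowski gauge $\rho$ of the convex body bounded by $\Vf(\x)$, evaluated at the waypoint $\xtilde_{\bxi}$, and compose with the linear map $\bxi \mapsto \sum_j \xi_j \z_j$. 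The two arguments encode the same geometric fact --- subadditivity of the gauge is precisely the statement that a boundary point cannot be written as $\theta_1\bv_1 + \theta_2\bv_2$ with $\theta_1+\theta_2 < 1$ when $\bm{0}$ is interior --- but you outsource the inequality to a standard result of convex analysis, whereas the paper verifies it by hand. Your packaging buys two things: it makes transparent that $C^s$ is the restriction of a globally convex, positively homogeneous function of the waypoint (consistent with the degree-one homogeneity the paper notes immediately after the Observation), and it avoids having to justify that $\bv(\bphi)$ lies in the angular sector spanned by $\bv(\boldeta_1)$ and $\bv(\boldeta_2)$. The paper's direct computation buys something too: the explicit coefficients in \eqref{eqn:theta_mc_convexity} are not a throwaway, since the same identification $\theta_j = \xi_j C_j / C(\bxi)$ is reused verbatim in the proofs of Theorem~\ref{thm:bounding_parallelograms_r2} and Theorem~\ref{thm:mc_speed_profile_r3}, where the gauge abstraction would have to be unwound again anyway. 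Both proofs are complete; yours correctly flags and resolves the one genuinely delicate point, namely that ``$\Vf(\x)$ convex'' must be read as ``$\Vf(\x)$ bounds a convex body with $\bm{0}$ in its interior,'' which the two-sided speed bounds guarantee.
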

\begin{proof}{Proof:}
Suppose $\boldeta_1, \boldeta_2 \in \Xi_m$ and $\bphi = \xi_1 \boldeta_1 + \xi_2 \boldeta_2$ for some $\bxi = (\xi_1, \xi_2) \in \Xi_2.$
The corresponding waypoints on the simplex $(\z_1, ..., \z_m)$ are 
$\xtilde_{\boldeta_i}= \sum_{j=1}^m \eta_{i,j} \z_j$ for $i=1,2$ and $\xtilde_{\bphi} =  \xi_1 \xtilde_{\boldeta_1} + \xi_2 \xtilde_{\boldeta_2}.$
The respective directions are $\ba_{\boldeta_i} = \xtilde_{\boldeta_i} / |\xtilde_{\boldeta_i}|$ and $\ba_{\bphi} = \xtilde_{\bphi} / |\xtilde_{\bphi}|.$
The velocity corresponding to $\ba_{\bphi}$ is
$$
\bv(\bphi) \, =  \, f(\bphi)\frac{\tilde{\x}_{\bphi}}{|\tilde{\x}_{\bphi}|} \, = \, \frac{f(\bphi)}{|\tilde{\x}_{\bphi}|}\left(\xi_1 \xtilde_{\boldeta_1} + \xi_2 \xtilde_{\boldeta_2} \right)
\, = \, \theta_1 \bv(\boldeta_1) + \theta_2 \bv(\boldeta_2),
$$
where $\theta_1, \theta_2 \geq 0$ 
since $\bv(\bphi)$ belongs to the angular sector formed by $\bv(\boldeta_1)$ and $\bv(\boldeta_2).$ 

Since $\bv(\boldeta_i) = f(\boldeta_i) \frac{\xtilde_{\boldeta_i}}{|\xtilde_{\boldeta_i}|},$ we see that
$\frac{f(\bphi) \xi_i}{|\tilde{\x}_{\bphi}|} = \theta_i \frac{f(\boldeta_i)}{|\xtilde_{\boldeta_i}|}$ for $i=1,2.$
By definition of $C,$ this is equivalent to 
\begin{equation}\label{eqn:theta_mc_convexity}
\theta_1 = \frac{\xi_1C(\boldeta_1)}{C(\bphi)}  \hspace{0.5cm} \text{and} \hspace{0.5cm} \theta_2 = \frac{\xi_2C(\boldeta_2)}{C(\bphi)}.
\end{equation}
The convexity of $\Vf(\x)$ implies  $\theta_1 + \theta_2 \geq 1$ or, equivalently, 
$\xi_1 C(\boldeta_1) + \xi_2 C(\boldeta_2)  \geq C(\bphi).$
\Halmos
\end{proof} 
We also note that, if $f(\bxi)$ is written as a function of direction (i.e., homogeneous of degree zero)
then $C(\bxi) = |\xtilde_{\bxi}| / f(\bxi)$ is homogeneous of degree one, making MC conditions such as 
\eqref{eq:MSSP_MC} applicable.  But we will see that Theorems \ref{theorem:gen_caus_condition_improved} and \ref{theorem:gen_caus_condition} are more useful in relating MC to geometry.

\begin{remark}\label{rem:another_mirebeau_connection}
In the case of a convex $C(\bxi)$ homogeneous of degree one, the $\delta$-MC criterion \eqref{eq:improved_MC_cond} 
from Theorem \ref{theorem:gen_caus_condition_improved} simplifies to 
\begin{equation}
\label{eq:eq:improved_MC_cond_simplified}
C(\bxi) \geq C \left(\bxi - \xi_r \e_r \right)  + \xi_r \delta, \qquad \forall r \in \{1, ..., m \},
\end{equation}
where $\e_r$'s are the canonical basis vectors.  
While $\left(\bxi - \xi_r \e_r \right)$ is not a probability distribution on successor nodes,  this formula is still very suggestive.  
For the specific case of $m=2$ and $\delta=0,$ 
a similar relation is the principle behind the notion of ``F-acuteness'' introduced in \cite{mirebeau2014efficient}.  
\end{remark}

\subsection{Monotone Causality in $\R^2$}
\begin{theorem}[
Sufficient Causality Condition on $\Vf$ in $\R^2$]\label{thm:bounding_parallelograms_r2}
$C(\bxi)$ is monotone causal on the simplex formed by $(\x=\bm{0}, \z_1,\z_2)$ if 
$\Vf_s(\x)$
is fully contained in a parallelogram with vertices at $\bm{0}, \bv_1, \bv_2$, and $\bv_1 + \bv_2$.

If the above property holds for every $s \in \Scal(\x)$ (i.e., if the entire $\Vf(\x)$ is contained in the union of such parallelograms),
then the entire stencil is causal at $\x$.  If this is the case  $\forall \x \in X,$ then Dijkstra's method is applicable.
\end{theorem}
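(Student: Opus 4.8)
The plan is to reduce the statement to the simplified monotone $\delta$-causality condition \eqref{eq:original_MC_cond} of Theorem~\ref{theorem:gen_caus_condition} with $\delta = 0$. Since $m = 2$ here, the deterministic-action requirement ($C \ge 0$) is vacuous, and for a fixed simplex $s$ with vertices $\bm{0}, \z_1, \z_2$ the condition \eqref{eq:original_MC_cond} collapses to the single family of inequalities $C(\bxi) \ge \max\!\big(\xi_1 C_1,\, \xi_2 C_2\big)$ for all $\bxi = (\xi_1,\xi_2) \in \Xi_2$, where $\e_1,\e_2$ are the deterministic actions in $s$ pointing toward $\z_1,\z_2$ and $C_j = |\z_j|/f_j$. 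Proving this for \emph{every} $\bxi \in \Xi_2$ is, if anything, stronger than proving it only on the post-pruning action set $A_i^u$, so it is enough.

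First I would record the algebraic identity linking cost and velocity. From the simplified notation $\bv_j = f_j\,\z_j/|\z_j|$, so $\z_j = C_j \bv_j$; and since $C(\bxi)\bv(\bxi) = |\xtilde_{\bxi}|\,\ba_{\bxi} = \xtilde_{\bxi} = \xi_1\z_1 + \xi_2\z_2$, we get $C(\bxi)\,\bv(\bxi) = \xi_1 C_1 \bv_1 + \xi_2 C_2 \bv_2$. Reading this as $\bv(\bxi) = \theta_1\bv_1 + \theta_2\bv_2$ with $\theta_j = \xi_j C_j / C(\bxi) \ge 0$ shows $\bv(\bxi) \in \Vf_s(\x)$, hence by hypothesis $\bv(\bxi)$ lies in the parallelogram with vertices $\bm{0}, \bv_1, \bv_2, \bv_1+\bv_2$. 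Because $\z_1,\z_2$ are linearly independent (vertices of a nondegenerate simplex through the origin) and $C_1,C_2 > 0$ by \textbf{(A5')}, the pair $\bv_1,\bv_2$ is a basis of $\R^2$; thus $\bv(\bxi) = \alpha_1\bv_1 + \alpha_2\bv_2$ with unique coefficients, and the parallelogram containment is precisely the statement that $\alpha_1,\alpha_2 \in [0,1]$.

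Matching coefficients in the identity gives $C(\bxi)\alpha_j = \xi_j C_j$ for $j = 1,2$. For each $j$: if $\xi_j = 0$ the bound $C(\bxi) \ge \xi_j C_j = 0$ is immediate since $C(\bxi) > 0$; if $\xi_j > 0$ then $\alpha_j = \xi_j C_j / C(\bxi) > 0$, and $\alpha_j \le 1$ yields $C(\bxi) \ge \xi_j C_j$. Hence $C(\bxi) \ge \max(\xi_1 C_1, \xi_2 C_2)$, which is exactly \eqref{eq:original_MC_cond} with $\delta = 0$ for the mode $s$. The aggregate claims now follow from the locality of the criterion: if the parallelogram containment holds for every $s \in \Scal(\x)$, then \eqref{eq:original_MC_cond} holds at $\x$ mode-by-mode; if it holds at every $\x \in X$, then the discretized OSSP satisfies the hypotheses of Theorem~\ref{theorem:gen_caus_condition}, so it is monotone causal and Dijkstra's method applies, exactly as in \S\ref{ss:OSSP_MC}.

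The only genuinely substantive step is the geometric reinterpretation in the second paragraph: recognizing that ``$\Vf_s(\x)$ sits inside the parallelogram'' is the same as ``the $\bv_1,\bv_2$-coordinates of $\bv(\bxi)$ never exceed $1$''. Once this is seen, everything else is a one-line coefficient comparison. I do not expect any analytic obstacle --- in particular, no smoothness or convexity of $f$ (or of $\Vf$) is needed, which is precisely the advantage the OSSP framework offers over the derivative-based criteria recalled in \S\ref{section:ossp_and_ahj}; one may even optionally note that when $\Vf(\x)$ happens to be convex, Observation~\ref{thm:convex_profile} gives the convexity of $C$ on $s$ for free, though it plays no role in the argument above.
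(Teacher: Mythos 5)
Your proof is correct and follows essentially the same route as the paper: you derive the barycentric identity $\theta_j = \xi_j C_j / C(\bxi)$ (which the paper obtains by specializing the argument of Observation~\ref{thm:convex_profile}), observe that the parallelogram containment is exactly $\theta_j \le 1$, and conclude via Theorem~\ref{theorem:gen_caus_condition} with $\delta = 0$. Your explicit remark that $\bv_1,\bv_2$ form a basis (so the coefficients are unique) is a small but welcome clarification that the paper leaves implicit.
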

\proof{Proof: }
Using the same argument as in Observation \ref{thm:convex_profile} but with $m=2$ and 
deterministic $\boldeta_i = \e_i$ for $i=1,2$, we see that
$\bphi = \bxi$ and $\xtilde_{\boldeta_i} = \z_i.$  Thus, \eqref{eqn:theta_mc_convexity} reduces to
\begin{equation}\label{eqn:lambda_mc_restriction}
\theta_1 = \frac{\xi_1C_1}{C(\bxi)}  \hspace{0.5cm} \text{and} \hspace{0.5cm} \theta_2 = \frac{\xi_2C_2}{C(\bxi)}.
\end{equation}
If $\bv(\bxi) = \theta_1 \bv_1 + \theta_2 \bv_2$ lies in that parallelogram, this means that $\theta_i \leq 1$ for $i=1,2$.  
Combined with \eqref{eqn:lambda_mc_restriction}, this is equivalent to \eqref{eq:original_MC_cond} with 
$\delta = 0$ and Theorem \ref{theorem:gen_caus_condition} applies.
 \Halmos
\endproof

\begin{figure*}[h]
\centering
$
\arraycolsep=1pt\def\arraystretch{0.1}
\begin{array}{ccc}
\includegraphics[width = 0.3\textwidth]{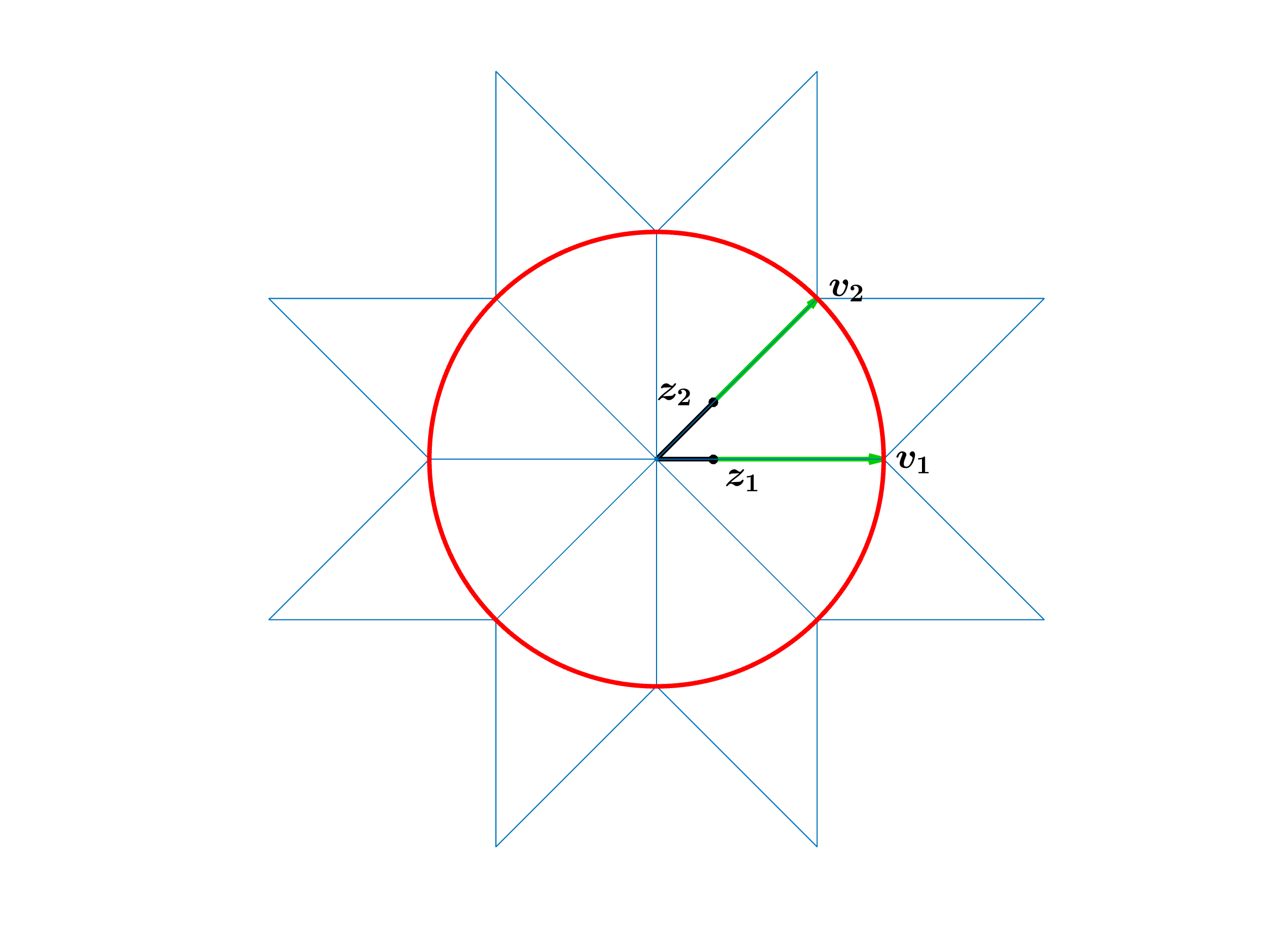} &
\includegraphics[width = 0.3\textwidth]{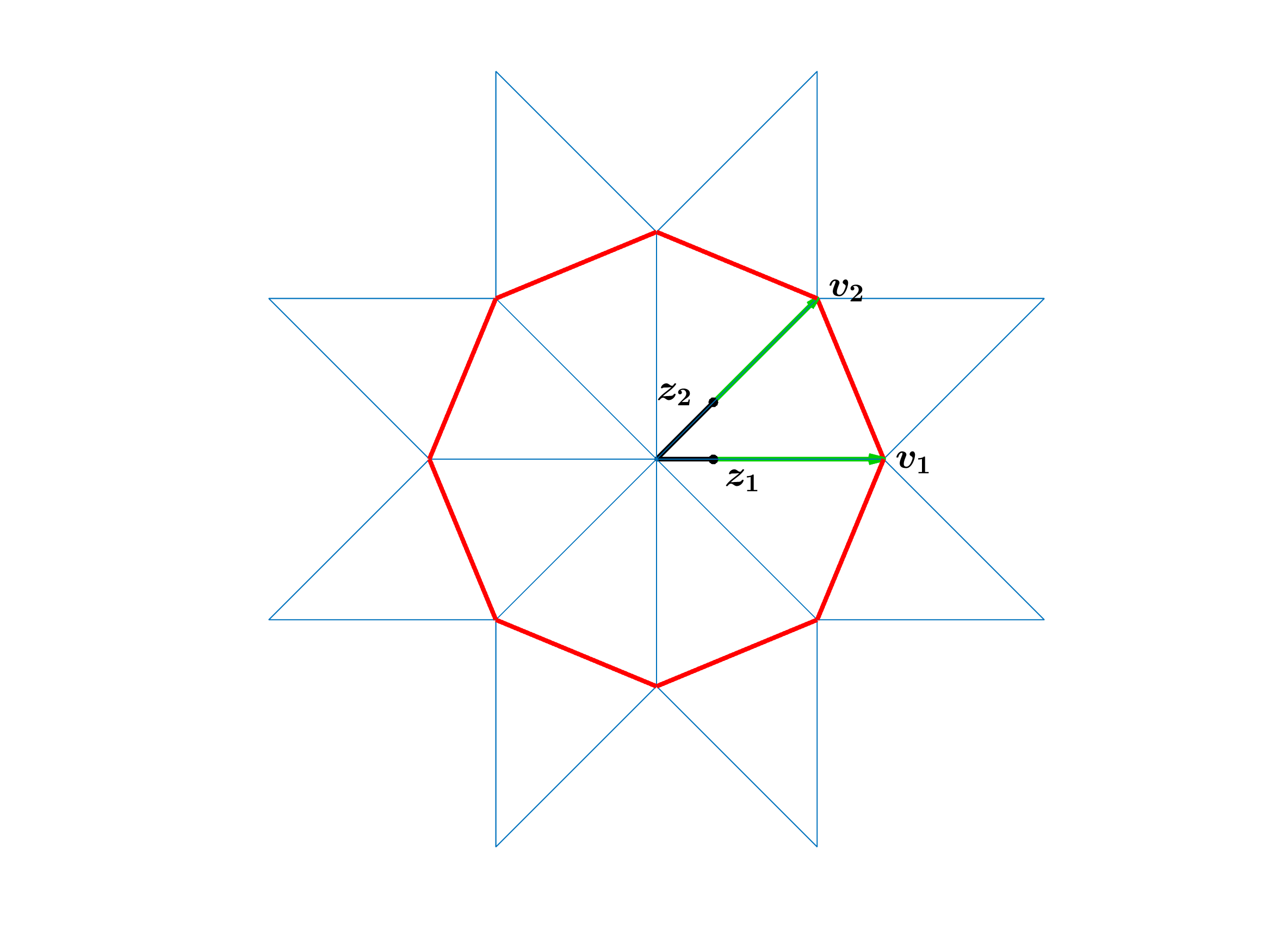} &
\includegraphics[width = 0.3\textwidth]{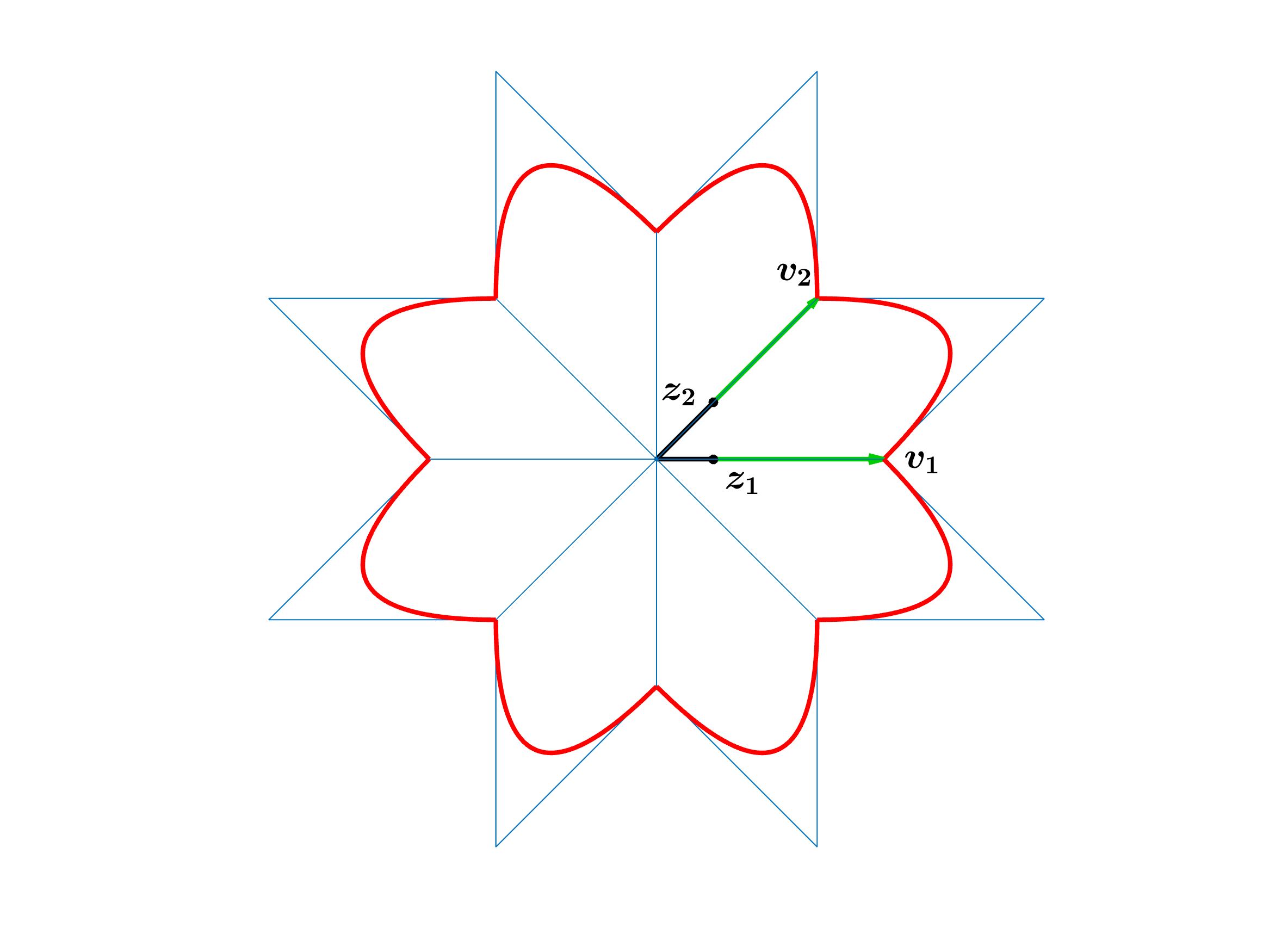} \\[-10pt]
\mbox{\footnotesize(a)} & \mbox{\footnotesize(b)}  & \mbox{\footnotesize(c)} \\
\includegraphics[width = 0.3\textwidth]{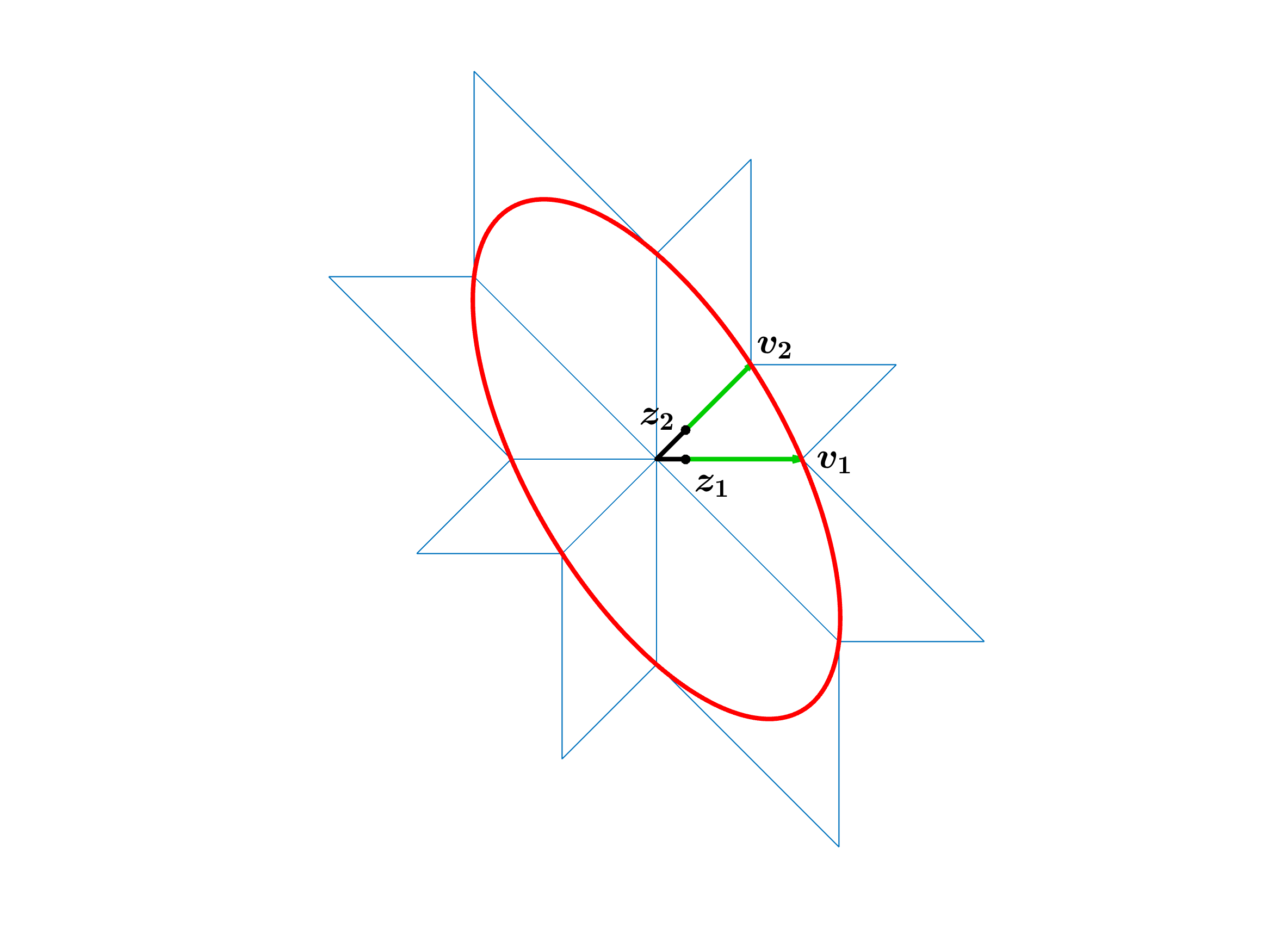} &
\includegraphics[width = 0.3\textwidth]{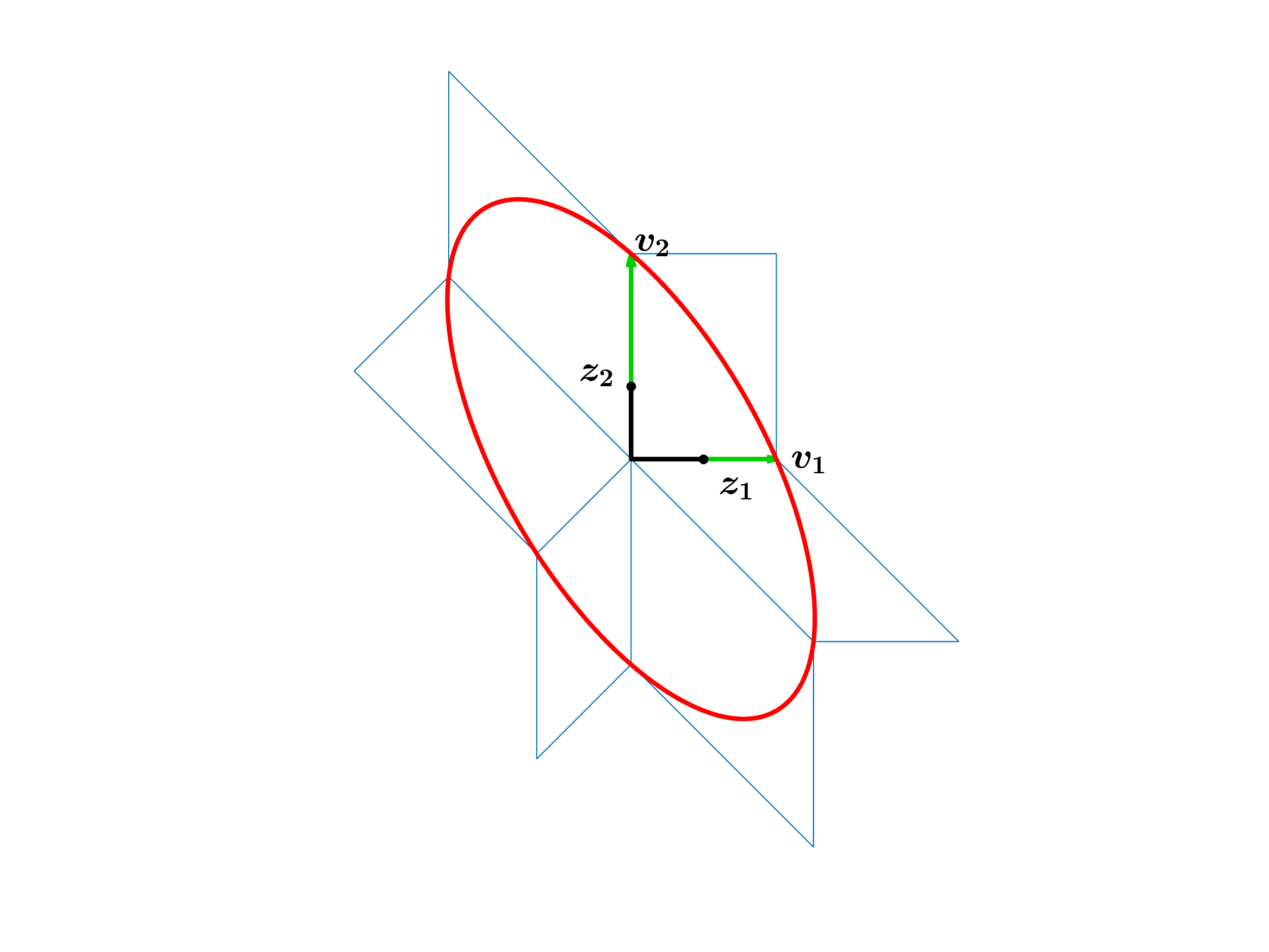} &
\includegraphics[width = 0.3\textwidth]{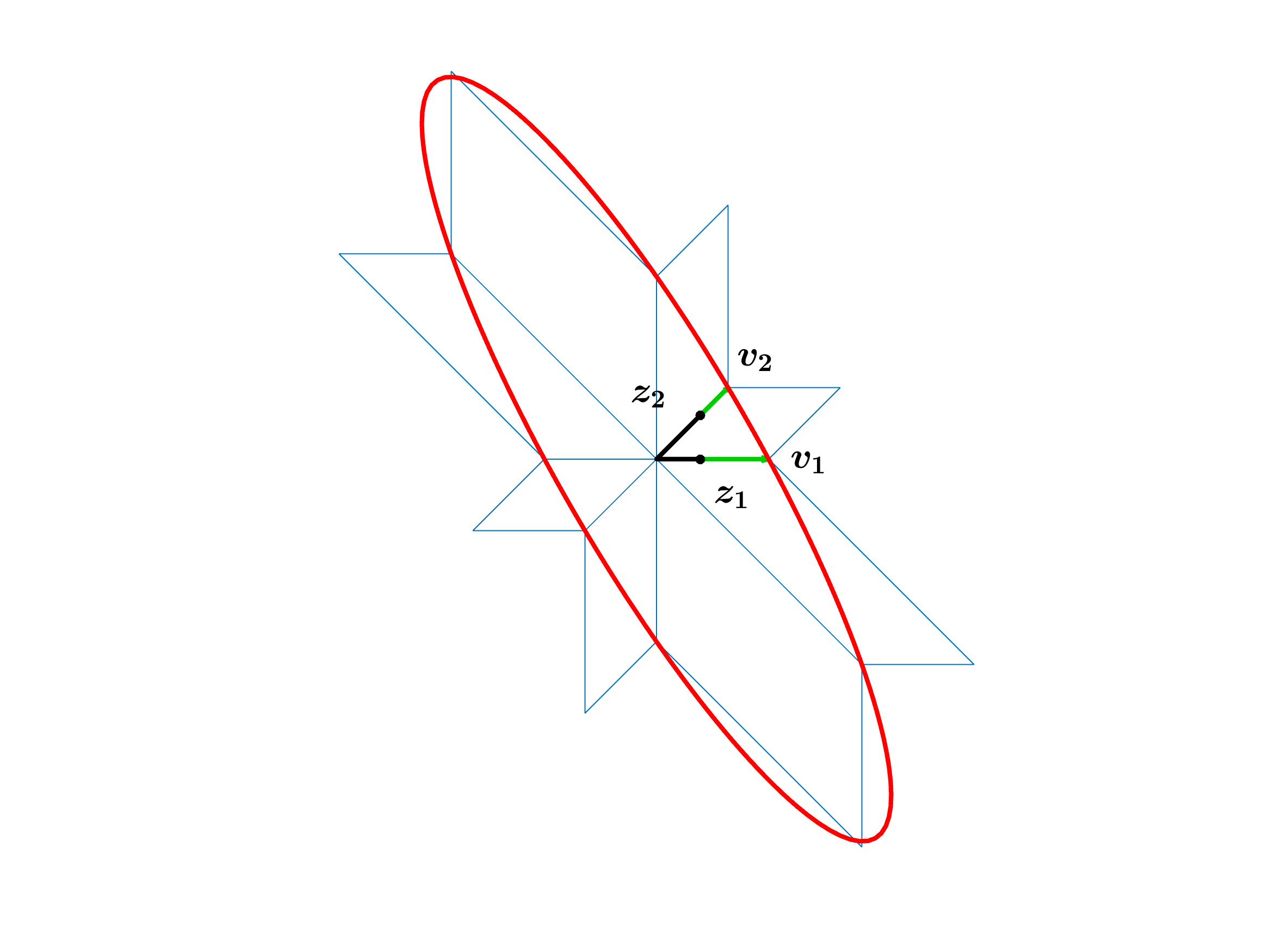} \\[-10pt]
\mbox{\footnotesize(d)} & \mbox{\footnotesize(e)}  & \mbox{\footnotesize(f)}
\end{array}
$
\caption{Bounding parallelograms for several speed profiles on the 8-point stencil shown in Figure \ref{fig:4pt_and_8pt_stencils}(b). The speed profile is depicted in red. The vectors along directions $\boldsymbol{z}_1$ and $\boldsymbol{z}_2$ are shown in black, and the velocities $\boldsymbol{v}_1$ and $\boldsymbol{v}_2$ along their respective directions $\boldsymbol{z}_1$ and $\boldsymbol{z}_2$  are shown in green. Panel (a): Isotropic circular speed profile. Panel (b): Octagonal speed profile. 
Panel (c):  A non-convex speed profile.
Panels (d) and (e): Tilted elliptical speed profile. Panel (e) illustrates the result of reducing the standard 8-point stencil used in the previous examples to a 6-point stencil by 
removing any superfluous
nodes $\z_j$
which are not crucial to the monotone-causality of the stencil. 
The resulting MC stencil is not symmetric relative to $\x$ despite the symmetry of the speed profile $\Vf(\x).$
Panel (f): Tilted elliptical speed profile with an aspect ratio twice that of the ellipse in panels (d) and (e). For this aspect ratio, monotone-causality no longer holds on the standard 8-point stencil. The condition in Theorem \ref{thm:bounding_parallelograms_r2} is violated on two simplexes in $\Scal(\x).$ 
\label{fig:r2_simplex_and_parallelogram}}
\end{figure*}
By Theorem \ref{thm:bounding_parallelograms_r2},
the standard 8-point stencil is guaranteed to be monotone causal at $\x$ for all examples of speed profiles in Figure \ref{fig:r2_simplex_and_parallelogram}(a-d).
The non-convex example in Figure \ref{fig:r2_simplex_and_parallelogram}(c) might seem paradoxical.  It is well-known that replacing $\Vf$ with its convex hull does not change the value function since the velocities which are not on the convex hull boundary are never optimal for any $\nabla u.$  
Thus, these eight ``deterministic directions'' (corresponding to transitions to eight closest gridpoints) will never be optimal in that continuous path-planning problem,
but might be still optimal in a monotone causal OSSP that discretizes it.  
This is an artifact of our simplex-by-simplex approach of enforcing MC -- the non-convexity (and the irrelevance of a large number of directions) is  not known if we can only consider the portion of $\Vf$ falling between $\bv_1$ and $\bv_2.$   Figure \ref{fig:r2_simplex_and_parallelogram}(e) illustrates MC-stencil reduction by merging two pairs of simplexes from panel (d). As shown in \cite{mirebeau2014anisotropic}, an MC-stencil based on only 6 Cartesian grid points can be found for any elliptic $\Vf,$ but for ellipses with larger aspect ratio and generic orientation such causal stencils become increasingly non-local.       Figure \ref{fig:r2_simplex_and_parallelogram}(f) shows that a local/standard 8-point stencil is not MC when the ellipse's aspect ratio is sufficiently large.

When $\Vf$ is known to be convex, checking the causality condition \eqref{eqn:lambda_mc_restriction} becomes a lot simpler,
since it can be replaced by a pair of inequalities verified at $\bv_1$ and $\bv_2$.  The following is
an alternative geometric proof of the already known criterion \cite[Lemma~2.2]{mirebeau2014efficient}.
\begin{proposition}[Causality and Tangent Information in $\R^2$]\label{thm:normal_vec_conditions} \mbox{}\\
Consider a simplex formed by $\x=\bm{0}, \z_1, \z_2.$
Suppose $\Vf$ is convex and smooth at $\bv_1$ and $\bv_2,$ with $\nhat_1$ and $\nhat_2$ denoting the respective outward normal vectors to $\Vf.$ 
Then $C(\bxi)$ is guaranteed to be monotone causal within this simplex if 
\begin{equation}
\label{eq:tang_MC}
\bv_1 \cdot \nhat_2 \geq 0 \hspace{0.3cm} \text{and} \hspace{0.3cm} \bv_2 \cdot \nhat_1 \geq 0.
\end{equation}
\end{proposition}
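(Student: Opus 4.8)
The plan is to derive this from the bounding-parallelogram criterion of Theorem~\ref{thm:bounding_parallelograms_r2}: it suffices to show that, under \eqref{eq:tang_MC}, the portion $\Vf_s$ of the speed profile lying in the cone spanned by $\z_1,\z_2$ is contained in the parallelogram with vertices $\bm{0},\bv_1,\bv_2,\bv_1+\bv_2$. Any velocity in $\Vf_s$ can be written as $\bv=\theta_1\bv_1+\theta_2\bv_2$ with $\theta_1,\theta_2\ge 0$ (the waypoint direction $\ba_{\bxi}$ for $\bxi\in\Xi_2$ lies in the angular sector spanned by $\z_1/|\z_1|$ and $\z_2/|\z_2|$, exactly as in the argument of Observation~\ref{thm:convex_profile}), and for a nondegenerate simplex $\bv_1,\bv_2$ are independent, so membership in the parallelogram is equivalent to $\theta_1\le 1$ and $\theta_2\le 1$. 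Via \eqref{eqn:lambda_mc_restriction} these are precisely \eqref{eq:original_MC_cond} with $\delta=0$, so this reduces the whole statement to the two inequalities $\theta_j \le 1$.

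First I would record two elementary facts about the outward normals. Since $f\ge F_1>0$, the profile $\Vf$ encloses the origin in its interior; the supporting line of $\Vf$ at $\bv_j$ is $\{\bv:\bv\cdot\nhat_j=\bv_j\cdot\nhat_j\}$, and evaluating at the interior point $\bm{0}$ gives the strict inequality $\bv_j\cdot\nhat_j>0$ for $j=1,2$. Convexity of $\Vf$ together with smoothness at $\bv_j$ gives the half-plane containment $\bv\cdot\nhat_j\le\bv_j\cdot\nhat_j$ for every $\bv\in\Vf$.

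Next, I would fix $\bv=\theta_1\bv_1+\theta_2\bv_2\in\Vf_s$ with $\theta_1,\theta_2\ge 0$ and dot the containment inequality for $\nhat_2$ into this expression:
\[
\theta_1(\bv_1\cdot\nhat_2)+\theta_2(\bv_2\cdot\nhat_2)\;=\;\bv\cdot\nhat_2\;\le\;\bv_2\cdot\nhat_2 .
\]
The hypothesis $\bv_1\cdot\nhat_2\ge 0$ (and $\theta_1\ge 0$) makes the first term nonnegative, so $\theta_2(\bv_2\cdot\nhat_2)\le\bv_2\cdot\nhat_2$, and dividing by $\bv_2\cdot\nhat_2>0$ yields $\theta_2\le 1$. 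The symmetric computation with $\nhat_1$, using $\bv_2\cdot\nhat_1\ge 0$ and $\bv_1\cdot\nhat_1>0$, yields $\theta_1\le 1$. Hence $\Vf_s$ lies in the parallelogram, and Theorem~\ref{thm:bounding_parallelograms_r2} (equivalently Theorem~\ref{theorem:gen_caus_condition} with $m=2$ and $\delta=0$) gives monotone causality of $C(\bxi)$ on the simplex.

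I do not anticipate a serious obstacle; the one point requiring care is the strict inequality $\bv_j\cdot\nhat_j>0$, i.e.\ that the origin is genuinely interior to $\Vf$ rather than on its boundary, since otherwise the final division is not justified — this is exactly where the uniform bound $F_1>0$ is used. It is also worth remarking that this recovers \cite[Lemma~2.2]{mirebeau2014efficient} by a purely geometric route, and that \eqref{eq:tang_MC} is just the statement that each of $\bv_1,\bv_2$ lies weakly on the interior side of the other's tangent line.
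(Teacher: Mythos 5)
Your proof is correct and follows essentially the same route as the paper's: both reduce the claim to the bounding-parallelogram criterion of Theorem~\ref{thm:bounding_parallelograms_r2} and use the tangent lines at $\bv_1,\bv_2$ as supporting lines of the convex profile. The only difference is presentational---you verify $\theta_1,\theta_2\le 1$ by an explicit dot-product computation (including the needed strict positivity $\bv_j\cdot\nhat_j>0$, which follows from $F_1>0$), whereas the paper asserts the containment of the tangent-line quadrilateral in the parallelogram with a reference to Figure~\ref{fig:convex_angle_diagram}(a).
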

\proof{Proof: }
By convexity, the angular sector of $\Vf$ is contained in a quadrilateral formed by $\bv_1, \bv_2,$ and two respective tangent lines $T_1$ and $T_2$.  The inequalities \eqref{eq:tang_MC} ensure 
that this quadrilateral is fully contained in the parallelogram with vertices $(\bm{0}, \bv_1, \bv_2, \bv_1 + \bv_2);$  see Figure \ref{fig:convex_angle_diagram}(a).
\Halmos 
\endproof

In the Eikonal case, it is well known that the stencil is MC at $\x$ if and only if each angle between $(\z_i - \x)$ and $(\z_j - \x)$ is non-obtuse in each $s \in \Scal(\x)$ \cite{sethian2000fast}.
Since the isotropic $\Vf$ is just a ball, in 2D this is a simple consequence of \eqref{eq:tang_MC}.
For smooth convex anisotropic speed profiles in 2D, the above criterion also provides an easy to verify alternative to the 
``negative-gradient-acuteness'' condition developed in \cite{alton2012ordered}.

\subsection{Monotone Causality in $\R^{3}$}

It might seem intuitive that the 3D version of the MC condition in Theorem \ref{thm:bounding_parallelograms_r2} should require the portion of $\Vf$ falling into the simplex $(\x=\bm{0}, \z_1, \z_2, \z_3)$
to lie within a parallelepiped $\Pi = \{ \sum_{j=1}^3 \theta_j \bv_j \, \mid \, \theta_j \in [0,1] \text{ for } j = 1,2,3\}.$  However, this condition turns out to be insufficient.  
As a motivating (counter-)example,
consider a simplex $s$ corresponding to a positive orthant, based on unit vectors $\z_j = \e_j$ with $C_j = 1$ for $j=1,2,3.$  
We will assume that in this simplex there are only 4 principal directions of motion ($\e_1, \e_2, \e_3,$ and 
along $\hat{\bxi} = (\frac{1}{3}, \frac{1}{3}, \frac{1}{3})$), and for the other directions in $s$ the velocity is obtained by taking a convex combination of those four.
In other words, the relevant part of the speed profile $\Vf_s$ is a union of three triangles with respective vertices  
$\{\bv(\hat{\bxi}), \bv_1, \bv_2\}, \{\bv(\hat{\bxi}), \bv_2, \bv_3\},$ and $\{\bv(\hat{\bxi}), \bv_1, \bv_3\}.$
In this case, $\Pi$ is just a unit cube and selecting $f(\hat{\bxi}) = \sqrt{3}$ ensures  
$\{\bv_1, \bv_2, \bv_3, \bv(\hat{\bxi}) \} \subset \Vf_s \subset \Pi.$  
To show that this simplex is not MC for the described $\Vf,$ it is enough to find some $U(\z_j)$ values such that  $\hat{\bxi}$ is uniquely optimal but $U(\x) < \max_j U(\z_j).$
It is easy to check that this is the case with $U(\z_1) = 0,  \, U(\z_2) = \frac{1}{4}, \, U(\z_3) = \frac{3}{2}$. 
Since $|\tilde{\x}_{\hat{\bxi}} - \x|  = |\hat{\bxi}| = \frac{1}{\sqrt{3}},$ we have $C^s(\hat{\bxi}) = |\tilde{\x}_{\hat{\bxi}} - \x| / f(\hat{\bxi}) = \frac{1}{3}$. Thus, $U(\x) = \frac{1}{3} + \frac{1}{3}U(\z_1) + \frac{1}{3}U(\z_2) + \frac{1}{3}U(\z_3) = \frac{11}{12} < \min_j \{C_j + U_j\} = 1$, and so $\hat{\bxi}$ is uniquely optimal.  On the other hand,
$U(\x) = \frac{11}{12} < U(\z_3),$ with the latter not available in time if we tried to compute $U(\x)$ using Dijkstra's method.

The correct sufficient condition is more subtle and uses oblique projections.
Before stating it, we need to introduce some additional notation.  
If $(i,j,k)$ is any permutation of $(1,2,3),$ we define the parallelograms 
$\Pi^{k} = \{ \theta_1  \bv_i + \theta_2 \bv_j \, \mid \, \theta_1,\theta_2 \in [0,1] \}$
and the parts of $\Vf_s$ in those same planes
$ \Vf^{k} =  \{ (\theta_1  \bv_i + \theta_2 \bv_j) \in \Vf  \, \mid \, \theta_1,\theta_2 \geq 0 \}.$
Finally, we also define the corresponding generalized semi-infinite cylinder as
$ \Psi^{k} = 
\{ \theta_1  \bv_k + \theta_2 \bv  \, \mid \, \bv \in \Vf^{k},  \, \theta_1 \geq 0, \, \theta_2 \in [0,1] \}.$

\begin{theorem}[Causality Condition on $\Vf$ in $\R^3$]\label{thm:mc_speed_profile_r3}
A simplex $s$
is guaranteed to be monotone causal if
$\Vf$ is convex,
$\Vf_s \subset \Big(  \Psi^1 \cap \Psi^2 \cap \Psi^3 \Big),$ and $\Vf^r \subset \Pi^r$ for $r=1,2,3.$
\end{theorem}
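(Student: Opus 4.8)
The plan is to verify the improved causality condition \eqref{eq:improved_MC_cond} of Theorem~\ref{theorem:gen_caus_condition_improved} with $\delta = 0$ for every action $\bxi \in \Xi_3$ associated with the simplex $s$. Since $\Vf$ is convex, Observation~\ref{thm:convex_profile} gives that $C = C^s$ is convex on $\Xi_3$, so $\check{C}(\bgamma_r)$ may be replaced by the within-simplex value $C^s(\bgamma_r)$ (this value can only overestimate the true $\check{C}$, making the inequality we prove stronger than needed). The deterministic actions $\bxi = \e_r$ satisfy $C(\x,\e_r) = C_r > 0$ by \textbf{(A5')}. An action supported on a $2$-dimensional face of $\Xi_3$, say $\xi_3 = 0$, has only the two successor nodes $\z_1,\z_2$ and its oblique projections are deterministic; there the condition collapses to the $m=2$ inequality \eqref{eq:original_MC_cond} with $\delta=0$, which follows from $\Vf^3 \subset \Pi^3$ exactly as in the proof of Theorem~\ref{thm:bounding_parallelograms_r2} (and symmetrically on the other faces). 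It remains to handle $\bxi = (\xi_1,\xi_2,\xi_3) \in \interior(\Xi_3)$, and by the symmetry of the hypotheses in the index it suffices to treat $r = 3$.

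\textbf{Velocity-space reformulation.} Writing $\z_j = C_j\bv_j$ and using $\xtilde_{\bxi} = \sum_j \xi_j \z_j = C(\bxi)\,\bv(\bxi)$, one obtains the (unique, by nondegeneracy of $s$) decomposition $\bv(\bxi) = \theta_1\bv_1 + \theta_2\bv_2 + \theta_3\bv_3$ with $\theta_j = \xi_j C_j / C(\bxi) > 0$. The oblique projection $\bgamma_3 = \big(\tfrac{\xi_1}{1-\xi_3}, \tfrac{\xi_2}{1-\xi_3}, 0\big)$ has waypoint $\xtilde_{\bgamma_3}$ parallel to $\theta_1\bv_1 + \theta_2\bv_2$, so $\bv(\bgamma_3)$ is the unique point of $\Vf$ on the ray through $\theta_1\bv_1 + \theta_2\bv_2$, and a direct computation gives $(1-\xi_3)\,C(\bgamma_3)\,\bv(\bgamma_3) = C(\bxi)\,(\theta_1\bv_1 + \theta_2\bv_2)$. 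Consequently the target inequality $(1-\xi_3)C(\bgamma_3) \le C(\bxi)$ is equivalent to $t_3 \le 1$, where $t_3 \ge 0$ is defined by $t_3\,\bv(\bgamma_3) = \theta_1\bv_1 + \theta_2\bv_2$.

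\textbf{Using the cylinder containment.} Since $\bv(\bxi) \in \Vf_s \subset \Psi^3$, write $\bv(\bxi) = \alpha\bv_3 + \beta\bw$ with $\alpha \ge 0$, $\beta \in [0,1]$, and $\bw \in \Vf^3$, and then $\bw = w_1\bv_1 + w_2\bv_2$ with $w_1,w_2 \ge 0$. Matching against the unique decomposition (linear independence of $\bv_1,\bv_2,\bv_3$) yields $\alpha = \theta_3$ and $\beta w_1 = \theta_1$, $\beta w_2 = \theta_2$; since $\theta_1,\theta_2 > 0$, necessarily $\beta > 0$ and $w_1,w_2 > 0$, so $\bw$ lies on the ray through $\theta_1\bv_1 + \theta_2\bv_2$. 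Because $\Vf$ bounds a convex region containing the origin, that ray meets $\Vf$ in a single point, hence $\bw = \bv(\bgamma_3)$. Then $t_3\,\bv(\bgamma_3) = \theta_1\bv_1 + \theta_2\bv_2 = \beta\bw = \beta\,\bv(\bgamma_3)$ forces $t_3 = \beta \le 1$, which is \eqref{eq:improved_MC_cond} with $\delta=0$ and $r=3$; the cases $r=1,2$ follow by symmetry, and invoking Theorem~\ref{theorem:gen_caus_condition_improved} gives monotone causality of $s$ (and Dijkstra applicability once this holds for every simplex at every node). The main obstacle is precisely the geometric identification $\bw = \bv(\bgamma_3)$: this is where the exact definition of the oblique projection $\bgamma_r$, the ``one intersection per ray'' property of a convex speed profile, and the use of the semi-infinite cylinders $\Psi^r$ — rather than the parallelepiped $\Pi$, which the motivating counterexample shows to be insufficient — must all be combined.
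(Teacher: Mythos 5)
Your proposal is correct and follows essentially the same route as the paper's proof: verify \eqref{eq:improved_MC_cond} with $\delta=0$, dispatch deterministic and face-supported actions via \textbf{(A5')} and the $\Pi^r$ containments (Theorem \ref{thm:bounding_parallelograms_r2}), and for interior $\bxi$ use the decomposition $\theta_j = \xi_j C_j/C(\bxi)$ together with $\Vf_s \subset \Psi^r$ to bound the coefficient of $\bv(\bgamma_r)$ by $1$. Your only addition is to spell out explicitly why the cylinder containment forces that coefficient to be at most $1$ (by matching the unique decomposition in the basis $\{\bv_1,\bv_2,\bv_3\}$ and identifying $\bw$ with $\bv(\bgamma_3)$), a step the paper asserts more tersely.
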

\begin{proof}{Proof: }
We will prove that the resulting $C$ satisfies the conditions of Theorem \ref{theorem:gen_caus_condition_improved} with $\delta = 0.$
For deterministic actions $\e_i$ this follows directly from {\bf (A5')}. 

If $\bv(\bxi) \in \Vf^{1},$ then $\I(\bxi) = 2$ and both of its oblique projections are deterministic: $\bgamma_2 = \e_3$ and $\bgamma_3 = \e_2.$
So, \eqref{eq:improved_MC_cond} is equivalent to $C(\bxi) \geq \xi_2 C_2$ and $C(\bxi) \geq \xi_3 C_3.$
But both of these are guaranteed by $\Vf^1 \in \Pi^1$; see Theorem \ref{thm:bounding_parallelograms_r2}.
The same argument also covers the cases $\bv(\bxi) \in \Vf^{2}$ and  $\bv(\bxi) \in \Vf^{3}.$

If $\bxi \in \interior(\Xi_3),$ then $\bv(\bxi) = \sum_{j=1}^3 \theta_j \bv_j.$
A similar derivation to the one in Observation \ref{thm:convex_profile} and Theorem \ref{thm:bounding_parallelograms_r2} shows 
\begin{equation}
\label{eq:theta_j}
\theta_j \; = \; \frac{\xi_j C_j}{C(\bxi)} \; = \; \xi_j \frac{f(\bxi)}{|\xtilde_{\bxi}|} \frac{|\z_j|}{f_j} \; > \;0  \qquad  \forall j.
\end{equation}

So, for any $r \in \{1,2,3 \},$ we have the oblique projection $\bgamma_r,$ the corresponding waypoint 
\mbox{$\xtilde_{\bgamma_r} = \sum_{j \neq r} \gamma_{r,j} \z_j,$} and velocity 
$\bv(\bgamma_r) = \sum_{j \neq r} \frac{\gamma_{r,j} \z_j}{|\xtilde_{\bgamma_r}|}  f(\bgamma_r).$ 
Thus,
\begin{align*}
\bv(\bxi) & = \,  \theta_r \bv_r + \sum\limits_{j \neq r} \theta_j \bv_j \, = \,
\theta_r \bv_r  + \frac{f(\bxi)}{|\xtilde_{\bxi}|} \sum\limits_{j \neq r} \xi_j  \frac{|\z_j|}{f_j} \bv_j 
\, = \, 
\theta_r \bv_r  + \frac{f(\bxi)}{|\xtilde_{\bxi}|} \sum\limits_{j \neq r} \xi_j  \z_j\\ 
& = \, 
\theta_r \bv_r  + (1-\xi_r) \frac{f(\bxi)}{|\xtilde_{\bxi}|} \sum\limits_{j \neq r} \gamma_{r,j}  \z_j 
\, = \, 
\theta_r \bv_r  + \left[ (1-\xi_r) \frac{f(\bxi)}{|\xtilde_{\bxi}|}  \frac{|\xtilde_{\bgamma_r}|}{f(\bgamma_r)} \right]  \bv(\bgamma_r),\\
& = \,  
\theta_r \bv_r  + \left[ (1-\xi_r) \frac{C(\bgamma_r)}{C(\bxi)} \right]  \bv(\bgamma_r),
\end{align*}
where the sequence of equalities uses \eqref{eq:theta_j}, the definition of $\bv_j,$ the definition of $\bgamma_r,$ the representation
of $\bv(\bgamma_r),$ and the formula for $C$.  
Since $\Vf_s \subset \Psi^r,$ the coefficient in front of  $\bv(\bgamma_r)$ has to be less than or equal to 1.  By Observation \ref{thm:convex_profile}, the convexity of $\Vf_s$
guarantees $\check{C}(\bgamma_r) = C(\bgamma_r),$ and thus $C(\bxi) \geq  (1-\xi_r) \check{C}(\bgamma_r).$
\Halmos
\end{proof}

Another interesting question is the amount of anisotropic distortion compatible with the monotone causality of a standard Cartesian stencil in $\R^m.$
For $m=2$  and the stencil in Figure \ref{fig:4pt_and_8pt_stencils}(a), $\Vf(\x)$ simply needs to lie in a rectangle whose vertices are at 
$\left( -f_{\text{\tiny W}}, f_{\text{\tiny N}} \right), \,
\left( f_{\text{\tiny E}}, f_{\text{\tiny N}} \right), \,
\left( f_{\text{\tiny E}}, -f_{\text{\tiny S}} \right), \,
\left( -f_{\text{\tiny W}}, -f_{\text{\tiny S}} \right).$
Here the subscripts indicate directions of motion and the coordinate system is centered at $\x$.

But as we already saw, with $m=3$ and the standard 6 point stencil, the answer is more complicated and depends on 
$f(\ba)$ in all directions parallel to coordinate planes (comprising $\Vf^{k}$'s).
For the example at the start of this subsection, 
$ \Big(  \Psi^1 \cap \Psi^2 \cap \Psi^3 \Big) = \{ (v_1, v_2, v_3) \in \R^3_{+,0} \; \mid \; \max(v_1+v_2, v_2 + v_3, v_1 + v_3) \leq 1 \}.$ 
Thus, Theorem \ref{thm:mc_speed_profile_r3} guarantees that this simplex is MC only if $f(\hat{\bxi}) \leq \sqrt{3}/2.$
For another interesting 3D example, consider a speed profile that is known to be isotropic (i.e., $f(\ba) = F$) but only for $\ba$'s that are parallel to coordinate planes
$(x_1,x_2),  \, (x_2,x_3),$ and $(x_1,x_3)$.  Theorem \ref{thm:mc_speed_profile_r3} guarantees that the standard 6-point stencil will be monotone causal if
the full 
$\Vf(\x)$ lies within a {\em tricylinder} 
$ \Big(  \Psi^1 \cap \Psi^2 \cap \Psi^3 \Big) = \{ (v_1, v_2, v_3) \in \R^3 \; \mid \; \max(v_1^2+v_2^2, v_2^2 + v_3^2, v_1^2 + v_3^2) \leq F^2 \}.$

We also note that in principle the argument in Theorem \ref{thm:mc_speed_profile_r3} can be recursively extended to any $m>3$. (The second half of the proof can be adopted for any $m$, but the conditions on $\partial \Xi_m$ will be more complicated.  E.g., for $m=4$, the current theorem provides conditions 
for all 3-dimensional faces of  $\Xi_4$.)
We do not pursue this extension because the criteria would not be as geometrically suggestive and because grid discretizations of \eqref{eqn:stationary_hj} are most commonly used when $m=2$ or $3.$

Similarly to the 2D case, for a smooth convex speed profile, the conditions of Theorem \ref{thm:mc_speed_profile_r3} can be replaced by criteria verified on the facets of the simplex only. The following Proposition is a 3D analog of Proposition \ref{thm:normal_vec_conditions}.
It also provides an explicit and easier to verify version of the 3D causality condition derived in 
\cite[Section 2.3]{desquilbet2021single}.

Let $\nhat(\tilde{\bv})$ denote an outward pointing normal vector to $\Vf$ at any $\tilde{\bv} \in \Vf^r.$
The support hyperplane to $\Vf$ at $\tilde{\bv}$ is specified by $(\bv - \tilde{\bv}) \cdot \nhat(\tilde{\bv}) = 0.$
We define the $r$-th ``tangential envelope'' of $\Vf_s$ as 
$$
\Phi^r = \left \{ \bv = \theta_1 \bv_1 + \theta_2 \bv_2 + \theta_3 \bv_3 \, \mid \,
\theta_i \geq 0 \; \text{ for } i=1,2,3 \text{ and } (\bv - \tilde{\bv}) \cdot \nhat(\tilde{\bv}) \leq 0 \text{ for all } \tilde{\bv} \in \Vf^r 
\right\}.
$$

\begin{proposition}[Normal MC verification in 3D.]\label{prop:3D_tangent_cond}
Suppose that the speed profile $\Vf_s$ is convex and smooth.  
A simplex $s \in \M(\x)$ with vertices $\x=\bm{0}, \z_1, \z_2, \z_3$ and corresponding velocities $(\bv_1, \bv_2, \bv_3)$
is guaranteed to be monotone causal if\\
\begin{enumerate}
\item
$\bv_i \cdot \nhat(\bv_j) \geq 0,  \quad \forall i,j \in \{1,2,3\};$
\item
$\bv_r \cdot \nhat(\tilde{\bv}) \geq 0,  \quad \forall r \in \{1,2,3\}, \; \tilde{\bv} \in \Vf^r.$
\end{enumerate}
\end{proposition}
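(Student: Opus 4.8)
The plan is to verify that conditions~(1) and~(2) force the two remaining hypotheses of Theorem~\ref{thm:mc_speed_profile_r3}, namely that $\Vf^r \subset \Pi^r$ for $r=1,2,3$ and that $\Vf_s \subset \Psi^1 \cap \Psi^2 \cap \Psi^3$; since the convexity hypothesis there is part of our assumptions, the monotone causality of $s$ then follows at once. Throughout, fix a permutation $(i,j,r)$ of $(1,2,3)$ and write $P_r$ for the plane through $\bm{0}$ spanned by $\bv_i$ and $\bv_j$, so that $\Vf^r = \Vf \cap \{\theta_1\bv_i + \theta_2\bv_j \mid \theta_1,\theta_2 \geq 0\}$ is a planar convex sector in $P_r$ whose two extreme boundary velocities are $\bv_i$ and $\bv_j$. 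I also record that, since $f \geq F_1 > 0$ in every direction, $\bm{0} \in \interior\Vf$, hence $\tilde{\bv}\cdot\nhat(\tilde{\bv}) > 0$ for every $\tilde{\bv}$ on $\partial\Vf$.

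For the containment $\Vf^r \subset \Pi^r$, which is purely two-dimensional, I would argue inside $P_r$. The outward normals to the convex curve $\partial\Vf \cap P_r$ at its endpoints $\bv_i$ and $\bv_j$ are the orthogonal projections onto $P_r$ of $\nhat(\bv_i)$ and $\nhat(\bv_j)$ (which exist by the smoothness hypothesis). Because $\bv_i,\bv_j \in P_r$, one has $\bv_i \cdot \nhat(\bv_j) = \bv_i \cdot \mathrm{proj}_{P_r}\nhat(\bv_j)$ and symmetrically, so condition~(1) in the cases $i \neq j$ is exactly the pair of inequalities required by Proposition~\ref{thm:normal_vec_conditions} applied in $P_r$ with $\bv_1,\bv_2$ replaced by $\bv_i,\bv_j$. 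Running the argument of that proposition's proof then places the sector $\Vf^r$ inside the parallelogram with vertices $\bm{0}, \bv_i, \bv_i+\bv_j, \bv_j$, which is precisely $\Pi^r$.

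For the cylinder containment I would first observe $\Vf_s \subseteq \Phi^r$: any $\bv \in \Vf_s$ is a nonnegative combination $\sum_k \theta_k\bv_k$ by definition of $\Vf_s$, and by convexity $\bv$ lies below the support hyperplane of $\Vf$ at every boundary point $\tilde{\bv} \in \Vf^r$, i.e.\ $(\bv - \tilde{\bv})\cdot\nhat(\tilde{\bv}) \leq 0$. It then suffices to prove $\Phi^r \subseteq \Psi^r$. Given $\bv \in \Phi^r$, split it as $\bv = \theta_r\bv_r + \bw_0$ with $\bw_0 = \sum_{k\neq r}\theta_k\bv_k$ lying in the cone spanned by $\{\bv_k\}_{k \neq r}$ inside $P_r$ (if $\bw_0 = \bm{0}$ then $\bv = \theta_r\bv_r \in \Psi^r$ already). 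Let $\bw$ be the unique point where the ray through $\bw_0$ meets $\partial\Vf$, so $\bw \in \Vf^r$ and $\bw_0 = c\bw$ for some $c > 0$. Since $\bv \in \Phi^r$ and $\bw$ is a boundary point of $\Vf$ in $\Vf^r$, $(\bv - \bw)\cdot\nhat(\bw) \leq 0$; writing $\bv - \bw = \theta_r\bv_r + (c-1)\bw$ this becomes $\theta_r\,\bv_r\cdot\nhat(\bw) + (c-1)\,\bw\cdot\nhat(\bw) \leq 0$. Condition~(2) with $\tilde{\bv} = \bw$ makes the first term nonnegative, while $\bw\cdot\nhat(\bw) > 0$, which forces $c \leq 1$. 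Hence $\bv = \theta_r\bv_r + c\bw$ with $\theta_r \geq 0$, $c \in (0,1]$, and $\bw \in \Vf^r$, i.e.\ $\bv \in \Psi^r$.

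Combining these over $r=1,2,3$ gives $\Vf^r \subset \Pi^r$ for each $r$ together with $\Vf_s \subseteq \Phi^r \subseteq \Psi^r$ for each $r$, hence $\Vf_s \subseteq \Psi^1 \cap \Psi^2 \cap \Psi^3$, so Theorem~\ref{thm:mc_speed_profile_r3} applies and $s$ is monotone causal. I expect the step $\Phi^r \subseteq \Psi^r$ to be the main obstacle: one must set up the radial rescaling from $\bw_0$ to its boundary point $\bw$ on $\partial\Vf$ correctly, check that condition~(2) is precisely what pins the rescaling factor at or below $1$, and dispose of degenerate configurations cleanly (e.g.\ $\bw_0 = \bm{0}$ above, or a normal $\nhat(\bv_j)$ orthogonal to $P_r$ in the first step). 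Everything else is routine manipulation of support hyperplanes and the definitions of $\Pi^r$, $\Psi^r$, and $\Phi^r$.
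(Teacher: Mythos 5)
Your proof is correct and takes essentially the same route as the paper's: condition (1) yields $\Vf^r \subset \Pi^r$ via the planar normal criterion of Proposition~\ref{thm:normal_vec_conditions} applied in the plane spanned by $\bv_i,\bv_j$, convexity gives $\Vf_s \subset \Phi^r$, and condition (2) forces $\Phi^r \subset \Psi^r$, after which Theorem~\ref{thm:mc_speed_profile_r3} finishes the job. The only difference is that you spell out the radial-rescaling argument for $\Phi^r \subseteq \Psi^r$ --- which is sound, since $c \le 1$ follows exactly as you say from $(\bv-\bw)\cdot\nhat(\bw)\le 0$, $\bv_r\cdot\nhat(\bw)\ge 0$, and $\bw\cdot\nhat(\bw)>0$ (and your worried-about degeneracy $\nhat(\bv_j)\perp P_r$ cannot occur because it would force $\bv_j\cdot\nhat(\bv_j)=0$) --- whereas the paper merely asserts that step as ``ensured by the second condition.''
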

\proof{Proof: }
By Proposition \ref{thm:normal_vec_conditions}, the first condition implies that $\Vf^r \subset \Pi^r$ for all $r \in \{1,2,3\}.$\\
We note that $\Vf_s \subset \Phi^r$ due to convexity of $\Vf$ and that $\partial \Phi^r$ is tangential to $\Vf$ along the $\Vf^r \subset \partial \Psi^r.$
Thus, $\Vf_s \subset \Psi^r$ if and only if $\Phi^r \subset \Psi^r,$ which is ensured by the second condition above.
This shows that $s$ satisfies the criteria listed in Theorem \ref{thm:mc_speed_profile_r3}.
\Halmos 
\endproof

\subsection{Monotone $\delta$-Causality in $\R^2$} 
The MC conditions derived above are fully based on $\Vf$ and the set of directions represented in a particular simplex; i.e., the distances  $|\z_i - \x|$
were irrelevant in specifying the MC-constrained set for $\Vf_s.$  This is not the case for monotone $\delta$-causality as we show below for $m=2.$ 
\begin{theorem}[$\delta$-Causality Condition on $\Vf$ in $\R^2$]\label{thm:delta_caus_r2}
Suppose that a simplex $s \in \Scal(\x)$ formed by vertices $\x=\bm{0}, \,  \z_1,$ and $\z_2$ is monotone causal for a specific $\Vf(\x)$ and
that $0 < \delta \leq \min(C_1= \frac{|\z_1|}{f_1},  \, C_2=\frac{|\z_2|}{f_2}).$
This simplex is monotone $\delta$-causal if $\Vf_s$ is contained in a quadrilateral with vertices at $\x= \bm{0}$, $\bv_1$, $\bv_2$, and $\bw(\delta) = \theta^{\#}_1(\delta)\bv_1 + \theta^{\#}_2(\delta)\bv_2,$ where 
\begin{equation}
\label{eq:quasikite_vertex}
\theta^{\#}_1(\delta) = \frac{C_1(C_2 - \delta)}{C_1C_2 - \delta^2} \quad \text{ and } \quad \theta^{\#}_2(\delta) = \frac{C_2(C_1 - \delta)}{C_1C_2 - \delta^2}.
\end{equation}
\end{theorem}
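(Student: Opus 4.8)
The plan is to deduce the theorem from the already-proved Simplified Monotone $\delta$-Causality Condition (Theorem \ref{theorem:gen_caus_condition}), which for $m=2$ is equivalent to Theorem \ref{theorem:gen_caus_condition_improved} (Remark \ref{rem:difference_in_MC_conditions}). Since each simplex can be treated separately (as in Theorem \ref{thm:bounding_parallelograms_r2}), it suffices to verify \eqref{eq:original_MC_cond} for all actions in this simplex, where the action set is $\Xi_2$ with cost $C(\bxi)$ and the two deterministic actions $\e_1,\e_2$ have costs $C_1,C_2$. Concretely, I must show
$$
C(\bxi) \;\ge\; \max\!\big(\,\xi_1 C_1 + \xi_2 \delta,\ \ \xi_1 \delta + \xi_2 C_2\,\big), \qquad \forall\, \bxi=(\xi_1,\xi_2)\in\Xi_2,
$$
and $C_j \ge \delta$ for $j=1,2$. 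The latter is immediate from $0<\delta\le\min(C_1,C_2)$, and the corners $\bxi=\e_1,\e_2$ of the displayed inequality reduce to this same bound, so only the interior case $\bxi\in\interior(\Xi_2)$ is substantive.

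For $\bxi\in\interior(\Xi_2)$, I would reuse the computation from the proof of Theorem \ref{thm:bounding_parallelograms_r2}: because $\bv(\bxi)$ lies in the angular sector spanned by $\bv_1$ and $\bv_2$, we may write $\bv(\bxi)=\theta_1\bv_1+\theta_2\bv_2$ with $\theta_1,\theta_2>0$, and \eqref{eqn:lambda_mc_restriction} gives $\theta_j = \xi_j C_j / C(\bxi)$. The hypothesis $\Vf_s\subseteq Q$, where $Q$ is the quadrilateral with vertices $\bm{0},\bv_1,\bv_2,\bw(\delta)$, means exactly that $\bv(\bxi)\in Q$ for every $\bxi\in\Xi_2$ (since $\Vf_s=\{\bv(\bxi):\bxi\in\Xi_2\}$). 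As $\{\bv_1,\bv_2\}$ is a basis of $\R^2$, expressing $Q$ in these coordinates identifies it with the planar quadrilateral having vertices $(0,0),(1,0),(0,1),(\theta_1^{\#}(\delta),\theta_2^{\#}(\delta))$. The plan is then to compute its two non-trivial supporting lines: using \eqref{eq:quasikite_vertex} one checks directly that the line through $(1,0)$ and $(\theta_1^{\#},\theta_2^{\#})$ is $C_2\theta_1+\delta\theta_2 = C_2$, and the line through $(0,1)$ and $(\theta_1^{\#},\theta_2^{\#})$ is $\delta\theta_1+C_1\theta_2 = C_1$; together with $\theta_1,\theta_2\ge 0$ this yields
$$
Q \;=\; \big\{(\theta_1,\theta_2):\ \theta_1,\theta_2\ge 0,\ \ C_2\theta_1+\delta\theta_2\le C_2,\ \ \delta\theta_1+C_1\theta_2\le C_1\big\}.
$$
(Here $0<\delta\le\min(C_1,C_2)$ is used to ensure $C_1C_2-\delta^2>0$ and that $\bw(\delta)$ is genuinely the far vertex, i.e. $\theta_1^{\#},\theta_2^{\#}\in[0,1]$ with $\theta_1^{\#}+\theta_2^{\#}\ge 1$, so these two lines really are the outer edges.) Substituting $\theta_j = \xi_j C_j/C(\bxi)$ and clearing denominators, $C_2\theta_1+\delta\theta_2\le C_2$ becomes $C(\bxi)\ge \xi_1 C_1+\xi_2\delta$, and $\delta\theta_1+C_1\theta_2\le C_1$ becomes $C(\bxi)\ge \xi_1\delta+\xi_2 C_2$ — precisely the $r=2$ and $r=1$ instances of \eqref{eq:original_MC_cond} for $m=2$. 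Theorem \ref{theorem:gen_caus_condition} then gives monotone $\delta$-causality of the simplex. (Since $Q$ sits inside the parallelogram $\{\bm{0},\bv_1,\bv_2,\bv_1+\bv_2\}$, the hypothesis in fact already implies the plain monotone causality assumed in the statement, so that assumption only fixes the regime.)

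I expect the only delicate part to be the bookkeeping in the middle paragraph: correctly identifying which two edges of $Q$ are the binding constraints and verifying, via \eqref{eq:quasikite_vertex}, that their equations are exactly $C_2\theta_1+\delta\theta_2=C_2$ and $\delta\theta_1+C_1\theta_2=C_1$ — everything after that is a one-line substitution. A secondary point to handle is the degenerate case $C_1=C_2=\delta$, where \eqref{eq:quasikite_vertex} is of the form $0/0$; there $\delta=\min(C_1,C_2)=\max(C_1,C_2)$, $Q$ collapses, and one either treats it by continuity or simply notes it lies outside the intended non-degenerate regime.
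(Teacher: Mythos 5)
Your proposal is correct and follows essentially the same route as the paper's proof: both reduce to the $m=2$ instance of Theorem \ref{theorem:gen_caus_condition}, substitute $\theta_j = \xi_j C_j / C(\bxi)$ from \eqref{eqn:lambda_mc_restriction}, and identify the two resulting half-plane constraints $C_2\theta_1+\delta\theta_2\le C_2$ and $\delta\theta_1+C_1\theta_2\le C_1$ as the outer edges of the quadrilateral whose far vertex is $\bw(\delta)$ from \eqref{eq:quasikite_vertex}. Your added bookkeeping (verifying $\bw(\delta)$ lies on both restriction lines, noting $C_1C_2-\delta^2>0$, and flagging the degenerate case $C_1=C_2=\delta$) is sound and only makes explicit what the paper leaves implicit.
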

\begin{proof}{Proof: }
For every $\bxi \in \Xi_2,$ the corresponding velocity  is $\bv(\bxi) = \theta_1 \bv_1 + \theta_2 \bv_2,$ where $\theta_1, \theta_2 \ge 0.$ 
By Theorem \ref{theorem:gen_caus_condition}, monotone $\delta$-causality of $C(\bxi)$ is guaranteed if 
$$
C(\bxi) \geq \xi_1 C_1 + \xi_2 \delta
\hspace{0.5cm} \text{and} \hspace{0.5cm} 
C(\bxi) \geq \xi_1 \delta + \xi_2 C_2.
$$
Dividing through by $C(\bxi)$ and using \eqref{eqn:lambda_mc_restriction}, we see that the equivalent conditions are 
$$
1\ge \theta_1+ \frac{\theta_2\delta}{C_2}  \hspace{0.5cm} \text{and} \hspace{0.5cm} 1\ge \theta_2+ \frac{\theta_1\delta}{C_1}.
$$
Each of these inequalities specifies a half-plane where $\bv(\bxi)$ is allowed to lie, with two restriction lines 
$L_1$ (passing through the points $\bv_1$ and $\frac{C_2}{\delta}\bv_2 = \frac{1}{\delta}\z_2$) and 
$L_2$ (passing through the points $\bv_2$ and $\frac{C_1}{\delta}\bv_1 = \frac{1}{\delta}\z_1$).
Solving for the intersection point $\bw(\delta),$ we obtain \eqref{eq:quasikite_vertex}.
\Halmos
\end{proof}
 
It is easy to see that this quadrilateral is fully contained in the MC parallelogram $\Pi$ and that $\bw(\delta) \rightarrow (\bv_1 + \bv_2)$
as $\delta \rightarrow 0.$    If $\delta = \min(C_1, C_2),$ one of the restriction lines coincides with the straight line connecting $\bv_1$ and $\bv_2,$
and the $\delta$-MC region becomes a triangle $(\bm{0}, \bv_1, \bv_2),$ ensuring that either $\frac{\z_1}{|\z_1|}$ or $\frac{\z_2}{|\z_2|}$ 
will be always optimal.  

Similarly to the MC setting, if $C^s(\bxi)$ is $\delta$-causal for all $\s \in \Scal(\x),$ then the entire stencil is $\delta$-MC at $\x$ and
the speed profile $\Vf(\x)$ is  inscribed within the union of bounding $\delta$-MC quadrilaterals which encompass all possible directions of motion. 
Figure \ref{fig:delta_causal_and_non} displays this result for two speed profiles on 
the 8-point stencil depicted in Figure \ref{fig:4pt_and_8pt_stencils}(b).  
\begin{figure*}[t]
\centering
$
\arraycolsep=1pt\def\arraystretch{0.1}
\begin{array}{cc}
\includegraphics[width = 0.4\textwidth]{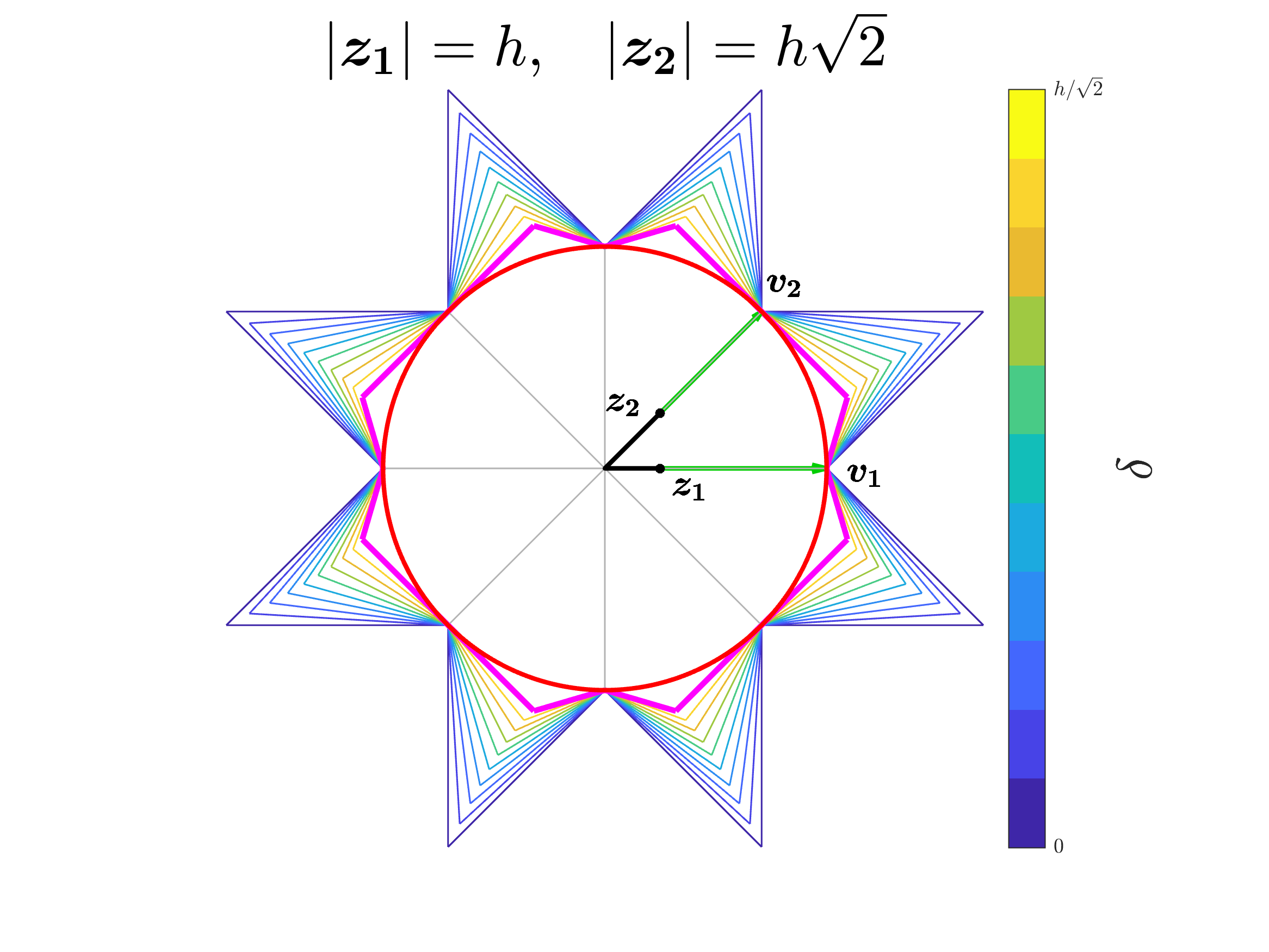} &
\includegraphics[width = 0.4\textwidth]{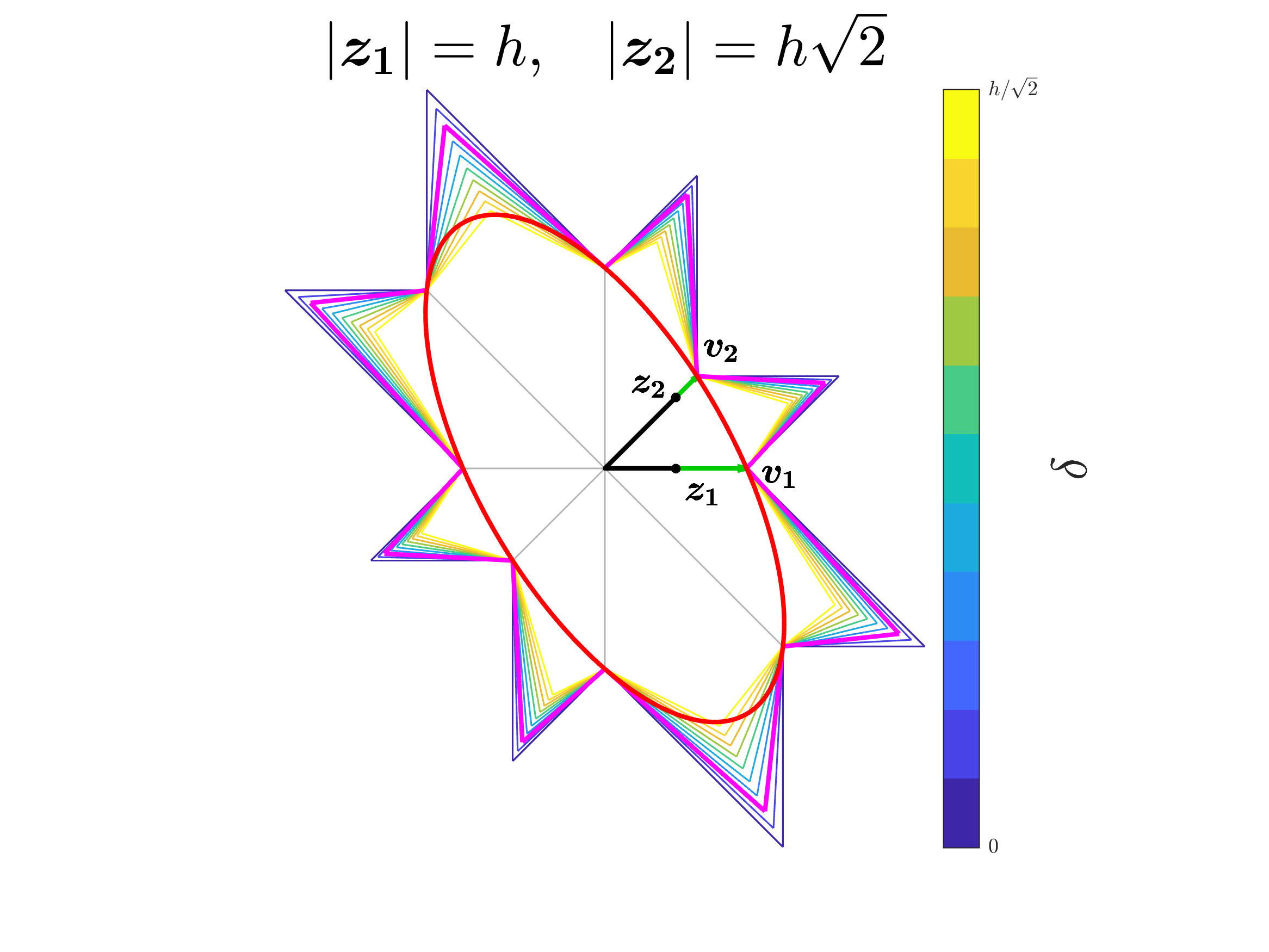} \\[-10pt]
\mbox{\footnotesize(a)} & \mbox{\footnotesize(b)}
\end{array}
$
\caption{Bounding $\delta$-MC ``sunflowers'' for the 8-point stencil (grid spacing $h = 1$ for $|\z_1| = h$, $|\z_2| = h\sqrt{2}$) found by fixing $f_i(\x) = f(\x, \ba_{\boldsymbol{e}_i})$ for 8 directions $(\x_i - \x)$ and varying $\delta.$ 
The $f_i$ are chosen to match an isotropic speed profile in panel (a) and an anisotropic elliptical speed profile in panel (b);  shown in red.  The direction vectors corresponding to $\z_1$ and $\z_2$ are shown in black, while the velocity vectors $\bv_1$ and $\bv_2$ are shown in green. The bounding sunflower corresponding to the maximum $\Delta(\x)$
applicable for this ($\Vf$, stencil) combination is shown
in magenta. \label{fig:delta_causal_and_non}}
\end{figure*}

The next obvious question is on the largest $\delta$ such that a specific simplex is $\delta$-MC.  The answer can be found from Proposition \ref{cor:max_delta_m2}, 
but the geometric derivation provided below is more natural.

\begin{proposition}[Max $\delta$ for Smooth, Strictly Convex $\Vf$ in $\R^2$]\label{thm:max_delta}
Suppose that $m=2$ and $\Vf(\x)$ is smooth and strictly convex.
Suppose also that a simplex  $s \in \Scal(\x)$ formed by $\x=\bm{0}, \,  \z_1,$ and $\z_2$ is MC for this $\Vf(\x).$
The maximal $\delta$ for which this simplex is $\delta$-MC can be computed as
 $\Delta(\x, s) = \min\{\delta_1, \delta_2\},$ where
\begin{equation}
\label{eq:max_simp_delta}
\delta_1 = C_2\frac{\bv_2 \cdot \nhat_1}{\bv_1 \cdot \nhat_1} 
\hspace{0.3cm} \text{ and } \hspace{0.3cm} 
\delta_2 = C_1\frac{\bv_1 \cdot \nhat_2}{\bv_2 \cdot \nhat_2},
\end{equation}
and $\nhat_i$'s denote normal vectors to $\Vf(\x)$ at respective $\bv_i$'s. 
\end{proposition}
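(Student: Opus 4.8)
The plan is to deduce this from Proposition \ref{cor:max_delta_m2} and then rewrite its derivative formula \eqref{eq:delta_restrict_convex} in terms of the geometry of $\Vf$. First I would verify the hypotheses of that proposition in the present setting: the semi-Lagrangian construction makes every $\bxi\in\Xi_2$ an available action, so $A_i=\Xi_2$; by Observation \ref{thm:convex_profile} the convexity of $\Vf(\x)$ makes $C(\bxi)=C^s(\bxi)$ convex on $\Xi_2$; and since $\xtilde_{\e_j}=\z_j\neq\bm 0$ while $\Vf$ is smooth at $\bv_j$ (so the radial speed $f(\x,\cdot)$ is differentiable near $\z_j/|\z_j|$), $C$ is differentiable at $\e_1$ and $\e_2$. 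Hence Proposition \ref{cor:max_delta_m2} applies, $\Delta(\x,s)=\min(\delta_1,\delta_2)$ with $\delta_1,\delta_2$ given by \eqref{eq:delta_restrict_convex}, and it only remains to identify those two expressions with the stated ones.

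For the translation, the key identity --- already underlying \eqref{eqn:lambda_mc_restriction} --- is that $\xtilde_\bxi=\xi_1\z_1+\xi_2\z_2=C(\bxi)\,\bv(\bxi)$, with $\bv(\bxi)\in\partial\Vf$. Since $C$ is homogeneous of degree one, Euler's theorem gives $\frac{\partial C}{\partial\xi_1}(\e_1)=C_1$ and $\frac{\partial C}{\partial\xi_2}(\e_2)=C_2$, so \eqref{eq:delta_restrict_convex} collapses to a pair of single partials, $\frac{\partial C}{\partial\xi_1}(\e_2)$ and $\frac{\partial C}{\partial\xi_2}(\e_1)$. To evaluate the latter I would differentiate $\xi_1\z_1+\xi_2\z_2=C(\bxi)\bv(\bxi)$ with respect to $\xi_2$ at $\bxi=\e_1$ and take the inner product with $\nhat_1$: the term $C_1\,\partial_{\xi_2}\bv|_{\e_1}\cdot\nhat_1$ vanishes because $\bv$ traces the smooth curve $\partial\Vf$ and hence moves tangentially there, leaving $\z_2\cdot\nhat_1=\frac{\partial C}{\partial\xi_2}(\e_1)\,(\bv_1\cdot\nhat_1)$; substituting $\z_2=C_2\bv_2$ gives $\frac{\partial C}{\partial\xi_2}(\e_1)=C_2\,(\bv_2\cdot\nhat_1)/(\bv_1\cdot\nhat_1)$. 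The symmetric computation at $\e_2$ yields $\frac{\partial C}{\partial\xi_1}(\e_2)=C_1\,(\bv_1\cdot\nhat_2)/(\bv_2\cdot\nhat_2)$. Since $\bm 0\in\interior\Vf$ forces $\bv_i\cdot\nhat_i>0$, both ratios are finite, and taking their minimum reproduces $\Delta(\x,s)=\min(\delta_1,\delta_2)$ exactly as claimed (the labels $\delta_1,\delta_2$ of Propositions \ref{cor:max_delta_m2} and \ref{thm:max_delta} being swapped, which is immaterial under the minimum).

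An alternative, more geometric route stays with Theorem \ref{thm:delta_caus_r2}: there the simplex is $\delta$-MC iff $\Vf_s$ sits inside the quadrilateral cut off by the restriction lines $L_1$ (through $\bv_1$ and $\tfrac1\delta\z_2$) and $L_2$ (through $\bv_2$ and $\tfrac1\delta\z_1$); since $\bv_1\in\partial\Vf\cap L_1$, increasing $\delta$ first violates the $L_1$-constraint precisely when $L_1$ becomes the tangent line to $\Vf$ at $\bv_1$, i.e. when $(\tfrac1\delta\z_2-\bv_1)\cdot\nhat_1=0$, recovering $\delta=C_2(\bv_2\cdot\nhat_1)/(\bv_1\cdot\nhat_1)$, and likewise at $\bv_2$. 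The step I expect to be the real obstacle is the \emph{necessity} direction --- arguing that exceeding $\min(\delta_1,\delta_2)$ genuinely destroys monotone $\delta$-causality and not merely the sufficient condition of Theorem \ref{thm:delta_caus_r2}. This is exactly where smoothness and strict convexity of $\Vf$ enter: together with $A_i=\Xi_2$ they upgrade the quadrilateral-containment criterion to an equivalence (the realized velocities fill all of $\partial\Vf_s$, and a supporting line once crossed is crossed strictly), making the tangency threshold sharp. Routing everything through Proposition \ref{cor:max_delta_m2} conveniently absorbs this subtlety, leaving only the normal-vector dictionary above, which is the version I would write up.
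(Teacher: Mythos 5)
Your primary argument is correct, but it is the route the paper explicitly mentions and then declines to follow: the text preceding this proposition states that ``the answer can be found from Proposition \ref{cor:max_delta_m2}, but the geometric derivation provided below is more natural,'' and the paper's actual proof is precisely your \emph{alternative} route --- shrink the $\delta$-MC quadrilateral of Theorem \ref{thm:delta_caus_r2} by increasing $\delta$ until one of the restriction lines $L_i$ becomes tangent to $\Vf$ at $\bv_i$, and read off $\delta_i$ from the tangency condition $\left(\tfrac{C_2}{\delta_1}\bv_2 - \bv_1\right)\cdot\nhat_1 = 0$. Your main write-up instead verifies the hypotheses of Proposition \ref{cor:max_delta_m2} (convexity of $C$ via Observation \ref{thm:convex_profile}, differentiability at $\e_1,\e_2$ from smoothness of $\Vf$), collapses \eqref{eq:delta_restrict_convex} to single partials via Euler's theorem, and converts those partials to the normal-vector form by differentiating the identity $\xi_1\z_1+\xi_2\z_2 = C(\bxi)\,\bv(\bxi)$ and killing the tangential term against $\nhat_i$; all of these steps check out, including the observation that the labels $\delta_1,\delta_2$ come out swapped relative to \eqref{eq:delta_restrict}--\eqref{eq:delta_restrict_convex}, which is harmless under the minimum. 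What the analytic route buys is a derivation that leans on an already-proved proposition and makes the role of differentiability explicit; what the paper's geometric route buys is a one-line picture (tangency of a restriction line) that generalizes directly to the non-smooth and finite-action cases of Figure \ref{fig:max_deltas}. Your remark about the necessity direction is fair but applies equally to the paper: both Proposition \ref{cor:max_delta_m2} and this proposition interpret ``maximal $\delta$ for which the simplex is $\delta$-MC'' as the maximal $\delta$ for which the sufficient condition \eqref{eq:original_MC_cond} holds, so you are not obligated to close that gap here.
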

\begin{proof}{Proof:}
Starting with the MC-parallelogram, gradually increasing $\delta$ moves the  lines $L_1, L_2$ (described in Theorem \ref{thm:delta_caus_r2}) and shrinks the $\delta$-MC quadrilateral.
For a convex and smooth speed profile, this process can be continued until one of the 
$L_i$'s becomes tangential to $\Vf(\x)$ at $\bv_i.$
Suppose this happens with $i=1$ first, when $\delta = \delta_1.$  
The line $L_1$ passes through the points $\bv_1$ and $\frac{C_2}{\delta_1}\bv_2.$  Tangentiality of $L_1$ means that 
$\left(\frac{C_2}{\delta_1}\bv_2 - \bv_1 \right) \cdot \nhat_1 = 0,$ which is equivalent to the expression in \eqref{eq:max_simp_delta}.
The case where $L_2$ becomes tangential first similarly yields the expression for $\delta_2.$
\Halmos
\end{proof}

If either of these lines is already tangential to $\Vf(\x)$ when $\delta=0,$ this means that the simplex is MC but not $\delta$-MC for any $\delta > 0;$
see Figure \ref{fig:convex_angle_diagram}(b).  
To find the largest $\delta$ such that the entire stencil is  $\delta$-MC at $\x,$ we define $\Delta(\x) = \min_{s \in \Scal(\x)} \Delta(\x, s).$
Similarly, let $\Delta(X) = \min_{\x \in X} \Delta(\x).$  For an MC discretization,  if $\Delta(X) > 0$ then Dial's method with the bucket width $\Delta(X)$ is also applicable.
We note that, since $C_i = \frac{|\z_i|}{f_i},$ for any fixed speed function $f(\x, \ba),$ the allowed bucket width $\Delta(X)$ will be shrinking under grid refinement.
\begin{figure*}[h]
\centering
\includegraphics[width = 1\textwidth]{figs/f_and_causality/normal_restrictions.tikz} 
\caption{Simplex $s$ and causality conditions on $\Vf_s$ for three different convex speed profiles. 
Tangent lines to $\Vf$ at are labeled $T_1$ and $T_2$ respectively and plotted in dashed blue. $\bv_1, \bv_2$, and $\alpha$ are equivalent in all three examples. $\Vf_s$ in panel (a) satisfies the monotone causality conditions posed in Theorem \ref{thm:bounding_parallelograms_r2} and the monotone $\delta$-causality conditions posed in Theorem \ref{thm:max_delta}. The corresponding sides of the resulting bounding quadrilateral are shown in solid purple. In this case, $L_2$ (shown in dotted purple) is tangential to $\Vf_s$, and $C(\bxi)$ is monotone $\delta$-causal in this simplex with $\delta = \delta_2$. In panel (b), $\Vf_s$ only satisfies the conditions of Theorem \ref{thm:bounding_parallelograms_r2} and $C(\bxi)$ cannot be monotone $\delta$-causal, as the side of the parallelogram opposite to $\bv_1$ is tangential to $\Vf$. Finally in panel (c), $\Vf_s$ violates the $\bv_1 \cdot \hat{\bn}_2 \ge 0$ condition in Proposition \ref{thm:normal_vec_conditions}, and $C(\bxi)$ cannot be monotone causal on this simplex.
\label{fig:convex_angle_diagram}}
\end{figure*} 

Proposition \ref{thm:max_delta} also provides a simpler derivation for some of the results previously found for specific 2D-stencils.
E.g.,  in the isotropic case $\Vf$ is just a ball; so, $\bv_i$ and $\nhat_i$ are parallel.  If $\beta$ is an angle between $\z_1$ and $\z_2,$
\eqref{eq:max_simp_delta} reduces to $\delta_1 = C_2 \cos \beta = \frac{|\z_2|}{f(\x)}\cos \beta$ and $\delta_2 = C_1 \cos \beta = \frac{|\z_1|}{f(\x)}\cos \beta.$
In the 8-point stencil of Figure \ref{fig:4pt_and_8pt_stencils}(b),  $|\z_1| = h, \, |\z_2| = h \sqrt{2},$ and $\beta = \pi/4.$  Thus, $\delta_1 = \frac{h}{f(\x)}, \, \delta_2 =  \frac{h}{\sqrt{2} f(\x) },$
and $\Delta(\x) = \delta_2.$  As a result, $\Delta(X) = \frac{h}{F_2 \sqrt{2}},$ which matches the bucket width derived in \cite{tsitsiklis1995efficient}. 
General 2D stencils and triangulated meshes can be treated similarly, using upper bounds on $|\z_i|$'s and $\beta.$

\section{OSSPs and Autonomous Vehicle Routing}\label{section:lane_change_formulation}
Routing modules in autonomous vehicles (AVs) have multiple interacting levels of  planning and  control which work together to bring the vehicle from its starting destination $\s$ to a predetermined destination $\bt$  \cite{michon1985critical}. 
Typically, the routing module first produces a Strategic Plan (SP), which is a set of deterministic, cost\footnote{E.g., time, fuel consumption, toll charges, passengers' comfort or a combination of these factors.}-minimizing, turn-by-turn directions from $\s$ to $\bt$.
SP is usually precomputed based on 
traffic and weather conditions using 
some label-setting method
to identify the deterministic cheapest path on a graph representing the road network.  
After that,
the system may determine a Tactical Plan (TP) outlining the planned timing and location of lane switch maneuvers (LSMs) necessary to execute the SP. Finally, the SP and TP are communicated to the Operational Control (OC) module which is responsible for planning the vehicle's continuous trajectory, steering, and dynamically accelerating / decelerating in order to follow the planned route. OC occurs in real time and is heavily constrained by safety overrides based on the actual observed vehicles.
If a planned LSM fails, this often makes it necessary to follow a suboptimal path and repeatedly recompute both SP and TP. 

The overall performance can be clearly improved by directly modeling the probabilistic nature of LSMs and minimizing the expected cost to target.
This is the essence of the new approach developed by Jones, Haas-Heger, and van den Berg \cite{jones2022lane}, 
which can be characterized as computing a combined stochastic Strategic / Tactical Plan (STP). 
They cast the routing problem as an SSP in which the states correspond to nodes within a lane-level road network such as the examples in Figure \ref{fig:lane_level_network}(a) and Figure  \ref{fig:roundabout_diagram}(b).
Using our terminology, their model is an example of an OSSP in which $|s| \leq 2$ for all $s \in \M(\x), \, \x \in X$. 
Deterministic modes (with $|s|=1$) are used to model the actions that normally don't fail (e.g., moving forward wherever lane changes are not allowed or available, or turning at an intersection without any lane changes).
On the other hand, a possible switch to each specific adjacent lane is encoded using a separate mode with $|s|=2.$
In \cite{jones2022lane}, each such mode is associated with exactly three LSM actions: the deterministic stay-in-lane, the deterministic ``forced'' LSMs, and one tentative LSM which succeeds with probability $\tilde{p} \in (0,1)$. Figure \ref{fig:lane_level_network}(b) shows an example of the LSMs available to the vehicle at $\x$ in mode $s_1 = \{\x_j, \x_k\}$. The vehicle may deterministically transition to $\x_j$, deterministically transition to $\x_k$, or use a stochastic transition leading to $\x_k$ or $\x_j$ with respective probabilities $\tilde{p}$ and $(1-\tilde{p})$. 
\begin{figure*}[h]
\centering
\includegraphics[width = 0.8\textwidth]{figs/lane_level_network.tikz} 
\caption{ 
Panel (a): Example lane-level road network representation of a three-lane highway. Lanes are discretized into cells of length $D$ meters, and each node marks the center of a cell. The vehicle travels from the starting point $\s$ to the destination $\bt$ via a series of planned LSMs. Panel (b): Actions available at node $\x$ in mode $s_1 = \{\x_j, \x_k\} \in \M(\x)$. The vehicle may continue driving in the current lane and directly transition to $\x_j$ (solid purple arrow), forcefully switch lanes and directly transition to $\x_k$ (solid red arrow), or attempt a tentative lane change (dashed blue arrow).  The other mode available at $\x$ is $s_2 = \{\x_j, \x_i\}$, encoding a possible switch to another lane.
\label{fig:lane_level_network}}
\end{figure*}

The vehicle's transitions are penalized as  
\begin{equation}\label{eqn:jurs_cost}
K(0) = g(\x), \hspace{0.5cm} K(\tilde{p}) = g(\x) + \tilde{p}g^{}_{1}, \hspace{0.5cm} \text{and} \hspace{0.5cm} K(1) = g(\x) + g^{}_{1} +(1-\tilde{p})g^{}_{2}
\end{equation}
where $g(\x)$ is the cost of traveling to the next node in the same lane, 
$g^{}_{1} > 0$ is the additional cost incurred if a tentative LSM is successful, and $g^{}_{2} > g^{}_{1}$ is a large penalty incurred for forcing the lane switch. For an unforced/tentative LSM, the transition probability is modeled as $\tilde{p} = 1 - e^{-\alpha D}$ where $D$ is the distance to the next node in the same lane, and $\alpha$ is the success rate determined from local traffic data. The cost $K(p)$ in \eqref{eqn:jurs_cost} is monotonically increasing in $p$ based on the assumption that the instantaneous cost of changing lanes will always be higher than staying in the current one.

Jones et al. have proved in \cite{jones2022lane} that a Dijkstra-like method is applicable to this SSP provided  
\begin{equation}
\label{eq:Jur's_condition}
g(\x) \; \ge \; \alpha D g^{}_{2}.
\end{equation}
The same result also follows as a direct application of our Theorem \ref{theorem:gen_caus_condition}, which for $m=2$ is equivalent to Theorem \ref{theorem:gen_caus_condition_improved}. Since $K(p)$ is monotonically increasing on $[0,1]$, we only need to check the inequality \eqref{eq:original_MC_cond} for $r = 1,$ which requires 
\begin{equation}
\label{eq:true_MC_cond_for_Jur}
K(\tilde{p}) = g(\x) + \tilde{p}g^{}_{1} \; \ge \; \tilde{p} \left[ g(\x) + g^{}_{1} + (1-\tilde{p})g^{}_{2}\right],
\end{equation} 
or, equivalently, $g(\x) \ge \tilde{p} g^{}_{2}.$
By the convexity of exponentials, $\alpha D \ge \tilde{p},$ and so \eqref{eq:Jur's_condition} implies \eqref{eq:true_MC_cond_for_Jur}.
We note that, outside of MSSPs \cite{vladimirsky2008label},
this is the first known example of a monotone causal OSSP. 
Moreover, if \eqref{eq:Jur's_condition} holds, it is easy to use our Proposition \ref{cor:max_delta_m2} to show that this OSSP is also monotone $\delta$-causal with $\Delta = \min_{\x} g(\x) - \tilde{p}g^{}_{2}.$

The theory developed in \S \ref{section:ssps_and_ossps} and \S \ref{section:label_setting_and_mc} significantly widens the scope of STP models that can be treated with label-setting methods\footnote{The following material is subject to Provisional US Patents 10471-01-US and 10471-02-US.}
.  E.g., we can now similarly account for a larger number of tentative LSM actions and different cost models with monotonically increasing and convex $K(p).$
The OSSP framework also allows for a useful reinterpretation of the lane change success probability in terms of the  \emph{lane change urgency}\footnote{
It might seem that the urgency of an LSM should be fully determined by the vehicle's distance to its next preplanned turn, which has to be executed from the lane we are trying to switch to.
But if similarly cheap alternate routes are available, missing a planned turn or highway exit may only have a very minor effect on the vehicle's total time (or other relevant expenditures) up to $\bt.$  Thus, an accurate assessment of urgency should take into account the global road network structure and traffic patterns.
},
which reflects the degree of controller's willingness to alter the vehicle's velocity to ensure a successful lane change \cite{gipps1986model}.
In reality, drivers \emph{gradually} increase or decrease LSM urgency in response to the local traffic conditions or nearby infrastructure \cite{ahmed1999modeling}. 
Thus, urgency exists on a continuous spectrum in which the stay-in-lane maneuver corresponds to no urgency ($p = 0$), the forced lane change maneuver corresponds to full urgency ($p = 1$), and all LSMs with stochastic outcomes correspond to intermediate urgency levels ($p \in (0,1)$). 

The simplest generalization of the model in \cite{jones2022lane} is to allow for more than one (but finitely many) intermediate LSMs, which might be executed 
\emph{progressively} at $\x$. The cost of a forced LSM in \eqref{eq:Jur's_condition} reveals that upon selecting the forced lane change maneuver, the vehicle attempts to switch lanes at the intermediate level first, and then only forces the LSM should that initial attempt fail. As the LSM urgency increases between the first attempt and second attempt, the LSMs themselves in this framework may also be described as \emph{escalating}. 

To be more precise, suppose there are $L+1$ available LSMs with associated success probabilities $p^{}_{\ell} \in \mathcal{P}$ such that
$0= p_0 < \dots <  p^{}_{\ell} <  \dots < p_{L} = 1.$ 
The stay-in-lane cost is $K(p_0) = g(\x)$, and maneuvers with $p^{}_{\ell} > 0$ are subject to additional penalties $Y_{\ell} > 0$ which are monotonically increasing in $\ell$ and incurred upon a successful lane change at the corresponding urgency level. 
If these actions are executed in escalating manner, their expected cost has the form 
\begin{equation}\label{eqn:gen_progressive_finite_cost}
K(p^{}_{\ell}) = p_{\ell-1}K(p_{\ell-1}) + (1-p_{\ell-1})\left[K(p_{\ell-1}) + Y_{\ell}\right]
 = K(p_{\ell-1}) + (1-p_{\ell-1}) Y_{\ell}, 
 \qquad \ell = 1, \dots, L.
\end{equation}
The convexity of $K$ is ensured when 
$\frac{K(p^{}_{\ell+1}) - K(p^{}_{\ell})}{p^{}_{\ell+1} - p^{}_{\ell}} \ge \frac{K(p^{}_{\ell}) - K(p_{\ell-1})}{p^{}_{\ell} - p_{\ell-1}}$ 
holds for all $\ell = 1, \dots L-1$, which implies that all of these actions are actually ``useful'' in the sense of Observation \ref{obs:ssp_useless_non_optimum}. 
If the above inequality does not hold, some of these LSM actions can be safely removed (in pre-processing) to reduce the computational cost of label-setting methods without affecting the value function.

By direct application of Theorem \ref{theorem:gen_caus_condition}, the OSSP will be monotone $(\delta$-)causal when
\begin{equation*}
K(p_{\ell-1}) + (1-p_{\ell-1})Y_{\ell}  \ge p^{}_{\ell}\left[K(p_{L-1}) + (1-p_{L-1})Y_L \right] + (1-p^{}_{\ell})\delta, 
 \qquad \ell = 1, \dots, L-1.
\end{equation*}
We note that the exact model used in \cite{jones2022lane} can be recovered by taking $L = 2$, $p_1 = \tilde{p}$, $Y_1 = \tilde{p}g^{}_{1}$, and $Y_2 = g^{}_{2} + g^{}_{1}.$ 

Whether or not the above progressive escalation framework is realistic is debatable:   
a lane change may take six or more seconds to complete \cite{fitch2009analysis} and attempting multiple consecutive LSMs before reaching the successor node in the same lane might be impossible.  But our framework can be similarly used for any increasing sequence of costs $K_{\ell} = K(p^{}_{\ell})$.  Due to the monotonicity of $K,$ the inequality 
\eqref{eq:original_MC_cond}
 has to be enforced for $r=1$ only and will hold automatically for $r=2.$
The lane switch mode will be monotone $(\delta$-)causal as long as 
\begin{equation}\label{eqn:no_escalating_finite_cost}
K(p) \; \geq \; p K(1)  + (1-p)\delta
\end{equation}
holds for every attainable $p < 1.$  This criterion works with models of finitely many urgency levels ($p \in \{p_0, ..., p_{L}\}$)
and also for models with a continuous spectrum of urgency levels ($p \in[0,1]$); see also Figure \ref{fig:mc_geometric_illustration}.
The maximum allowable $\delta$ can be similarly computed using Proposition \ref{cor:max_delta_m2}.

We also mention two other LSM-cost models illustrated by the numerical experiments covered in the next subsection.
For the continuous urgency spectrum, one of the simplest (monotone, convex) cost models is quadratic
\begin{equation}\label{eqn:smooth_cont_c}
K(p) = \beta_s(\x) p^2 + \gamma_s(\x)
\end{equation} 
where $\beta_s(\x), \gamma_s(\x) > 0 $ are constants which may reflect traffic conditions near $\x$ or learned driver behavior. 
Using this $K(p)$ in \eqref{eqn:no_escalating_finite_cost} and simplifying, we see that this lane switch mode will be $\delta$-monotone causal as long as $\beta_s(\x) + \delta \leq \gamma_s(\x).$
Thus, Dijkstra's method will be applicable if $\beta_s(\x) \leq \gamma_s(\x)$ for all $\x \in X, \, s \in \M(\x).$  If this inequality is strict,  we can also use Dial's method with buckets of width
$\Delta(X) = \min_{\x \in X} \min_{s \in \M(\x)} \left( \gamma_s(\x) - \beta_s(\x) \right).$

Alternatively, we can start with an (increasing, convex) sequence of urgency levels and associated costs $\left(p_{\ell}, K_{\ell} \right)$ learned from local traffic data or from the vehicle's performance analytics, and then extend it to a continuous urgency spectrum $p \in [0,1]$ by defining $K(p)$ through a suitable interpolation.

For example, when $L = 2$, a natural choice of interpolant is a quadratic Rational B\'ezier Curve (RBC) with suitable control points and weights chosen to ensure that the resulting smooth approximation to $K(p)$ is monotone, convex, and monotone ($\delta$-)causal by Theorem \ref{theorem:gen_caus_condition}. The RBC is described parametrically as $\boldsymbol{B}(t) = (p(t), K(t))$, and the entire curve sits within the convex hull of a given set of $L+1$ control points. The curve's shape is governed by those control points and a set of corresponding weights $\{\omega_0, \dots, \omega_L \}$ which determine how much the curve bends toward each control point \cite{piegl1996nurbs}. Assuming that the original three-point $\left(p_{\ell}, K_{\ell} \right)$ sequence is $\delta$-MC, we can take the first control point to be $(0, K(0))$ and the last control point to be $(1, K(1))$, with the remaining control point chosen to lie at the intersection of the lines $K = K_0$ and $K = p K_2  + (1-p)\delta$ to ensure that the resulting curve is both monotonically increasing and monotone ($\delta$-)causal by Theorem \ref{theorem:gen_caus_condition}. Since RBCs are guaranteed to pass through their first and last control points, we can set $\omega_0 = \omega_2 = 1$, and we construct a system of two linear equations (one for $K(t) = K_1$ and one for $p(t) = p_1$) to compute $t$ and the value of $\omega_1$ which ensures that the RBC also passes through $(p_1, K_1)$ as required. 
We note that, for this choice of control points, Theorem \ref{theorem:gen_caus_condition} guarantees that this OSSP will be monotone causal though not $\delta$-MC for any $\delta>0.$

Evaluating the usefulness (and the optimal urgency) of merging to an adjacent lane is the essential part of computing the value function.
E.g., starting from the point $\x$ in Figure \ref{fig:lane_level_network}(b),
we need to find $p$ that minimizes 
$\left( K(p) + (1-p)U_j + p U_k \right).$  For a continuous spectrum of $p$ values, the availability of $K'(p)$ can be used to perform this minimization
either analytically (e.g., with the quadratic $K$ model) or semi-analytically (e.g., with the RBC-based $K$ model described above)
to ensure the computational efficiency.

\subsection{Numerical Examples}\label{subsec:numerical_results}
We use a Dijkstra-like method to determine the optimal STP for a vehicle traveling through several different lane-level road networks.
In all examples, $D = 10$ meters, and the direction of traffic flow between successor nodes is indicated by arrows. 
When visualizing optimal STPs, we use solid arrows to indicate when a deterministic LSM is optimal and dashed arrows to indicate when the optimal LSM is deterministic. 
The color of such arrows indicates the optimal $p^*,$ whose value (rounded for the sake of readbility) is also shown as a label.

\vspace*{3mm}
\noindent
{\bf Example 1: a 3-lane highway; 4 urgency levels.}\\
We first determine the STP for an autonomous vehicle traveling on a $1500$ meter section of a three-lane highway.
The target offramp $\bt$ is located in the left lane, but the vehicle is incentivized to move in the right lane as much as possible.
This is a common preference in routing autonomous trucks; following \cite{jones2022lane}, we accomplish this by defining the cost
of moving in the current lane as
$g(\x) = D\sigma_i$
where $\sigma_i = 1 + i\varepsilon$, the lanes are enumerated from right to left ($i=0,1,2$), and $\varepsilon > 0$ is a fixed penalty. 
We also model an onramp, with additional traffic entering the right lane from a merge lane at node $\x_{\#}$, exactly $1km$ from the target.
The right-lane nodes within $10$ meters of $\x_{\#}$ experience an additional cost $\mu = 35$ in their $g(\x)$ to account for 
moderate congestion due to the traffic merging in from the onramp.
Nodes in the left and right lanes have only one mode, while nodes in the middle lane have two (switch to the left or to the right). In each mode, the system may choose between four available LSMs of increasing urgency
with associated success probabilities
$0 = p_0 < p_1 < p_2 < p_3 = 1$.
$K_{\ell} = K(p_{\ell})$ is given by \eqref{eqn:gen_progressive_finite_cost} with $L = 3$, $p_1 = \tilde{p}$, $Y_1 = \tilde{p}g^{}_{1}$, $p_2 = 0.2$, $Y_2 = 2$, and $Y_3 = 40$.
The expression for $\tilde{p}$ 
mirrors the definition
in \cite{jones2022lane}, and we set $\alpha = 0.01$, $\varepsilon = 0.1$, and $g^{}_{1} = 3$. A plot of $K(p_{\ell})$ for vehicles driving in the right lane is displayed in Figure \ref{fig:ex1}(a). The resulting $K(p_{\ell})$ is monotone $\delta$-causal by Theorem \ref{theorem:gen_caus_condition} with $\delta_{\star} \approx 4.095$.

We compute the STP on this stretch of a highway and show the optimal policy for a smaller segment (between $970$ meters and $1040$ meters away from $\bt$) in Figure \ref{fig:ex1}(b).
It illustrates how additional intermediate LSMs allow the vehicle to dynamically adjust the urgency of its attempts in response to anticipated locations of higher cost. 
E.g., vehicles in the right lane prefer to temporarily switch left as they approach the onramp
with an increasing LSM urgency as they get closer. 
At $\x_{\#}$ this urgency decreases again (we have already suffered through most of the delay), and beyond $\x_{\#}$ those traveling on the right will prefer 
to continue in their current lane until they are much closer to the target.
The vehicles traveling in the center lane will only start trying to switch right after passing the onramp.
However, even after  $\x_{\#}$ their urgency level to switch right remains low -- this reflects the fact that the target offramp is already not too far and on the left.

We also show the deterministic optimal SP on the same highway segment in Figure \ref{fig:ex1}(c).  
Following these instructions would result in much more aggressive lane switching from the central and left lanes and,
not surprisingly, would yield a higher overall cost to target.
Comparing SP and STP over all nodes in this network, STP results in the median, average, and maximum cost reduction
of  $5.23\%, \, 5.49\%,$ and $15.65 \%$ respectively.

\begin{figure}[htbh]
$
\arraycolsep=1pt\def\arraystretch{0.1}
\begin{array}{ccc}
\includegraphics[width=0.33\linewidth]{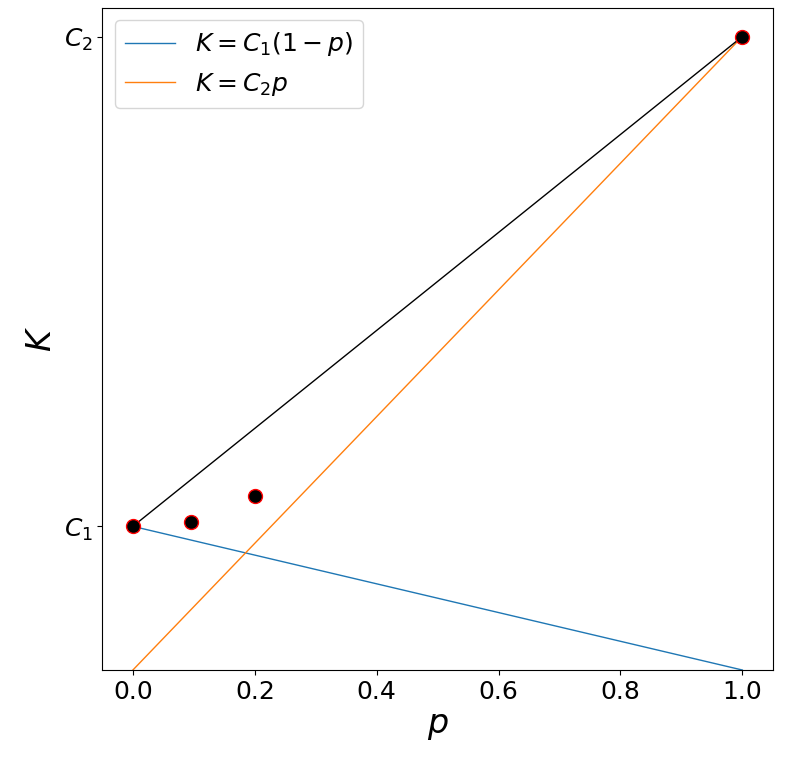} &
\includegraphics[width=0.33\linewidth]{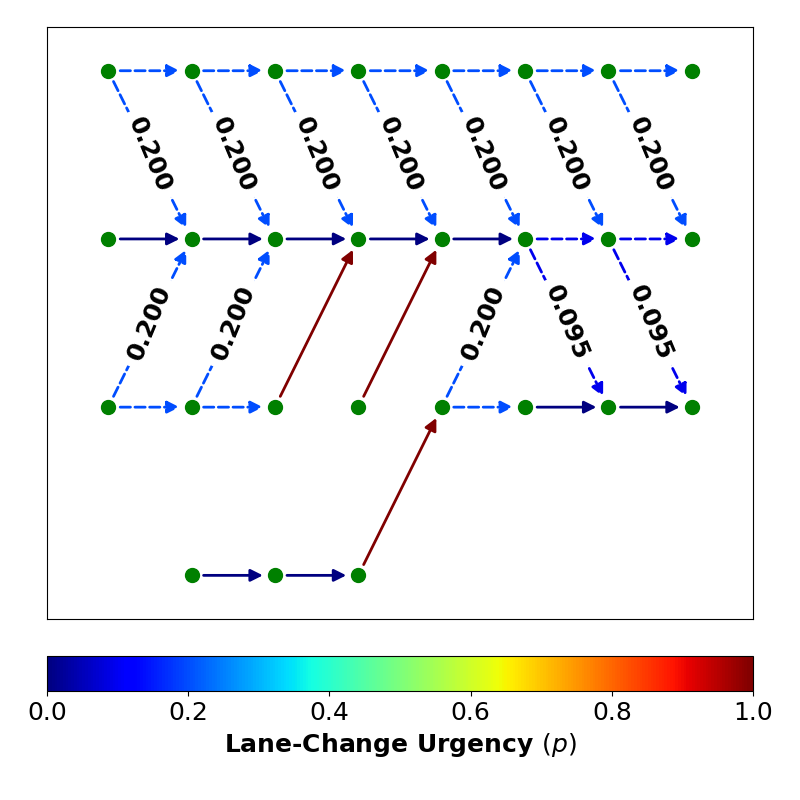} &
\includegraphics[width=0.33\linewidth]{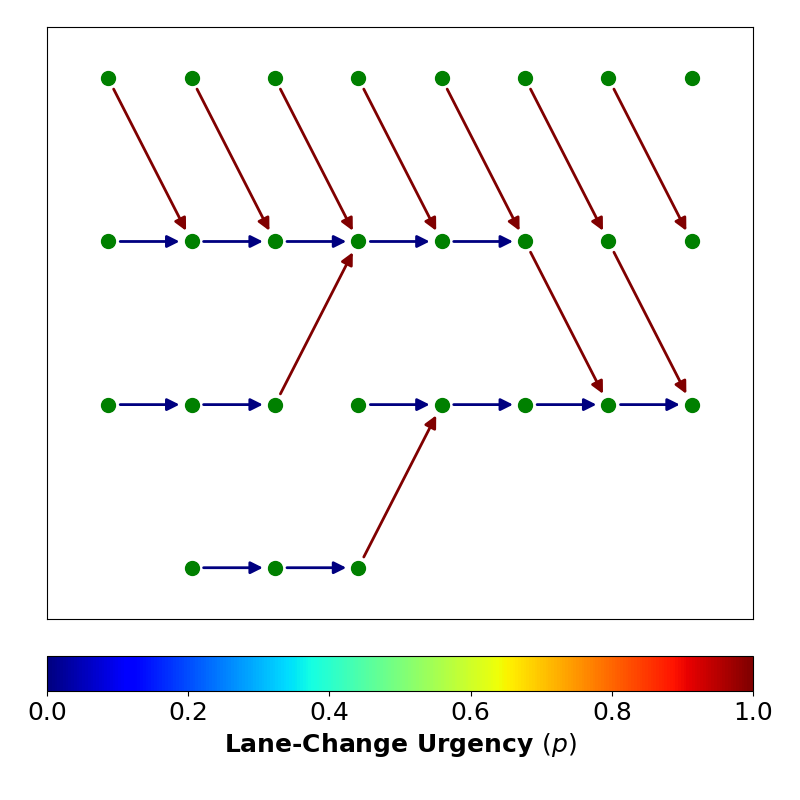}\\[-1pt]
\mbox{\footnotesize (a)} & \mbox{\footnotesize(b)} & \mbox{\footnotesize(c)}
\end{array}
$
\caption{Lane-level planning on a three-lane highway between $970$ meters and $1040$ meters away from $\bt$. Panel (a): Finite cost function $K(p)$ for vehicles traveling in the right lane (except within $10$ meters from $\x_{\#}$ on either side) for the $L=3$ escalating LSMs framework described above. $C_1$ is the stay-in-lane cost, $K(0)$, and $C_2$ is the forced LSM cost, $K(1)$. Panel (b): STP when there are four available LSMs per mode. The edges are colored corresponding to the success probability associated with the optimal LSM urgency level at $\x$. Panel (c): The deterministic SP when the stay-in-lane cost is $g(\x)$ and the lane change cost is equal to $K(1)$. 
\label{fig:ex1}}
\end{figure}

\vspace*{3mm}
\noindent
{\bf Example 2: a 3-lane highway; continuous urgency spectrum.}\\
Extending the previous example,
we now use
Rational B\'ezier Curves to construct a continuous cost function through the points $(p_0=0, K_0), (p_2, K_2)$, and $(p_3=1, K_3).$
These three points are defined for each lane in Example 1, and we know this action set is monotone causal by Theorem \ref{theorem:gen_caus_condition}; see Figure \ref{fig:ex1}(a). 

For this stretch of a three-lane highway,
there are \emph{four} RBCs to consider -- one RBC for each lane (since $g(\x)$ is lane-dependent) and 
one RBC for the slow-down (onramp merge) zone, which includes the right-lane nodes within $10$ meters of $\x_{\#}$. 
Each RBC has control points $\left\{(p_0, K_0), \left(\frac{K_0}{K_1}, K_0\right), (p_3, K_3 )\right\}$ and weights $\{\omega_0 = 1, \omega_1, \omega_2 = 1\},$
with $\omega_1$ chosen to make the RBC satisfy $K(p_2) = K_2.$
The resulting cost function for vehicles traveling in the right lane (except at the locations within $10$ meters of $\x_{\#}$) is displayed in Figure \ref{fig:ex2}(a). 
 \begin{figure}[h]
 \centering
$
\arraycolsep=24pt\def\arraystretch{0.1}
\begin{array}{cc}
\includegraphics[width=0.4\linewidth]{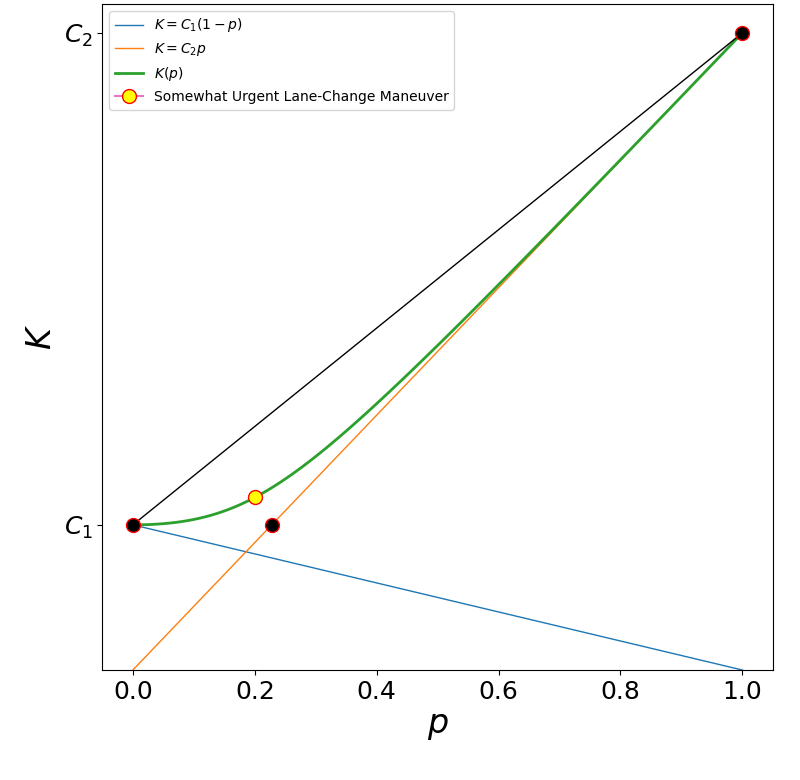} &
\includegraphics[width=0.4\linewidth]{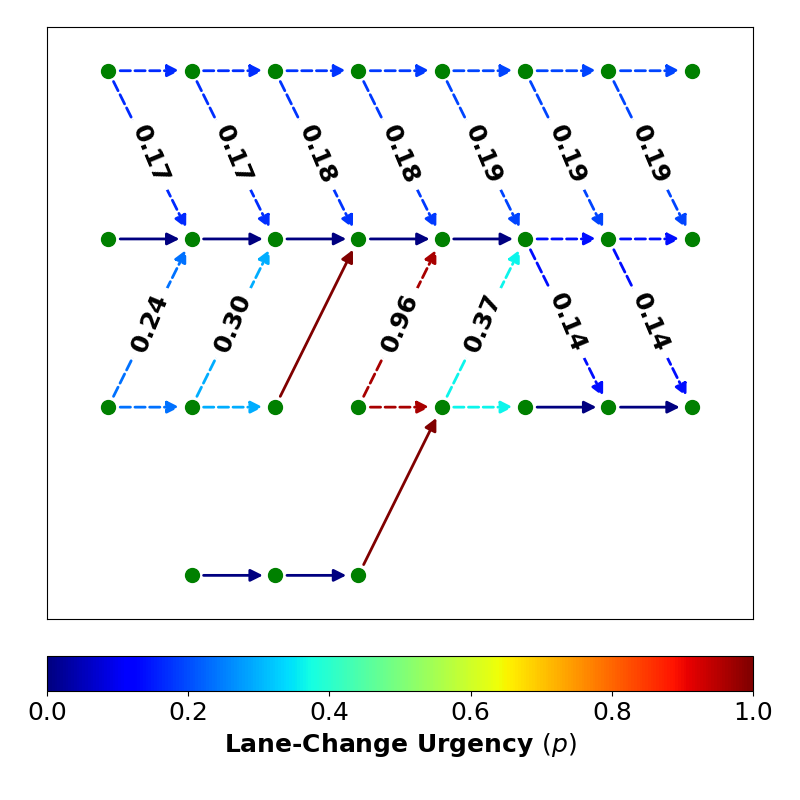} \\[-1pt]
\mbox{\footnotesize (a)} & \mbox{\footnotesize(b)} 
\end{array}
$
\caption{Lane-level planning on a three-lane highway between $970$ meters and $1040$ meters away from $\bt$ in the left lane. Panel (a): Rational B\'ezier cost function through three cost estimate datapoints for vehicles traveling in the right lane (except $10$ meters from the merge location, $\x_{\#}$). $C_1 = K(0), C_2 = K(1)$, and the point corresponding to the intermediate LSM from the dataset is plotted in yellow. Tangency of $K(p) = C_2p$ to the cost curve prohibits monotone $\delta$-causality by Theorem \ref{theorem:gen_caus_condition}. Panel (b): Resulting STP under the RBC cost structure between $970$ meters and $1040$ meters away from $\bt$. \label{fig:ex2}}
\end{figure}   

Since $K(p_1)$ under the RBC model is fairly close to $K_1$ defined in the previous subsection, it is instructive to compare the RBC-based optimal STP shown in Figure  \ref{fig:ex2}(b) with the STP based on just 4 discrete LSMs in Figure  \ref{fig:ex1}(b).   
While the schematic representation of these policies is similar,
the optimal behavior is significantly more nuanced in the case of RBCs, with gradual urgency build-ups highlighting the advantages of continuous spectrum models.

\vspace*{3mm}
\noindent
{\bf Example 3: multiple roundabouts; continuous urgency spectrum.}\\
We note that in both previous examples the dependency digraph $G_{\mu}$ is acyclic for every stationary policy.  This is due to the simplicity of the underlying road network (all arrows pointing rightward), which represented a stretch of a single highway.  As a result, the OSSPs are explicitly causal and \eqref{eqn:value_fn_vi} could be solved efficiently in a single Gauss-Seidel iteration (sweeping through the nodes from right to left) regardless of the properties of $K(p)$.  
But most road networks contain cycles and thus are not explicitly causal, making the MC properties (and applicability of label-setting methods) far more important.
Non-trivial cycles are ubiquitous due to numerous road intersections on a large map, but can also be found in smaller examples with ring roads and roundabouts.
The ``magic'' (or interconnected) roundabouts used in several cities of the United Kingdom present drivers with particularly interesting strategic choices when
parts of the roundabout are clogged with traffic.


 \begin{figure}[t]
$
\arraycolsep=1pt\def\arraystretch{0.1}
\begin{array}{cc}

\tikzset{every picture/.style={line width=0.75pt}} 

\begin{tikzpicture}[x=0.3pt,y=0.3pt,yscale=-1,xscale=1]

\draw [line width=2.25]  [dash pattern={on 2.53pt off 3.02pt}]  (189.73,365.93) .. controls (190.47,353.86) and (182.73,322.93) .. (212.73,268.93) ;
\draw [color={rgb, 255:red, 245; green, 166; blue, 35 }  ,draw opacity=1 ][line width=1.5]    (648.74,453.57) -- (539.01,366.48) ;
\draw [color={rgb, 255:red, 245; green, 166; blue, 35 }  ,draw opacity=1 ][line width=1.5]    (270.01,3.02) -- (270.38,91.92) ;
\draw [color={rgb, 255:red, 245; green, 166; blue, 35 }  ,draw opacity=1 ][line width=1.5]    (625.97,572.89) -- (480.97,458.43) ;
\draw [color={rgb, 255:red, 245; green, 166; blue, 35 }  ,draw opacity=1 ][line width=1.5]    (120,369.46) -- (0.74,462.84) ;
\draw [color={rgb, 255:red, 245; green, 166; blue, 35 }  ,draw opacity=1 ][line width=1.5]    (442.01,83.45) .. controls (443.01,99.2) and (441.01,108.2) .. (458.01,126.2) ;
\draw [color={rgb, 255:red, 245; green, 166; blue, 35 }  ,draw opacity=1 ][line width=1.5]    (196.01,130.2) .. controls (213.01,117.2) and (211.01,97.2) .. (211.01,83.45) ;
\draw [color={rgb, 255:red, 245; green, 166; blue, 35 }  ,draw opacity=1 ][line width=1.5]    (87,328.46) .. controls (108.01,314.2) and (112.01,290.2) .. (110.01,279.2) ;
\draw [color={rgb, 255:red, 245; green, 166; blue, 35 }  ,draw opacity=1 ][line width=1.5]    (192,511.46) .. controls (212.01,501.2) and (234.01,497.2) .. (246.01,508.2) ;
\draw [color={rgb, 255:red, 245; green, 166; blue, 35 }  ,draw opacity=1 ][line width=1.5]    (192,511.46) -- (65.01,610.48) ;

\draw [color={rgb, 255:red, 245; green, 166; blue, 35 }  ,draw opacity=1 ][line width=1.5]    (405.01,510.2) .. controls (425.01,501.2) and (453.01,503.2) .. (458.01,510.48) ;
\draw [color={rgb, 255:red, 245; green, 166; blue, 35 }  ,draw opacity=1 ][line width=1.5]    (603.01,624.93) -- (458.01,510.48) ;
\draw [color={rgb, 255:red, 245; green, 166; blue, 35 }  ,draw opacity=1 ][line width=1.5]    (567.99,320.48) .. controls (548.01,303.2) and (544.01,275.2) .. (549.01,277.2) ;
\draw [color={rgb, 255:red, 245; green, 166; blue, 35 }  ,draw opacity=1 ][line width=1.5]    (657.74,392.57) -- (567.99,320.48) ;

\draw  [color={rgb, 255:red, 43; green, 85; blue, 234 }  ,draw opacity=1 ][line width=1.5]  (110.36,304.24) .. controls (110.36,183.22) and (208.47,85.11) .. (329.49,85.11) .. controls (450.52,85.11) and (548.63,183.22) .. (548.63,304.24) .. controls (548.63,425.27) and (450.52,523.38) .. (329.49,523.38) .. controls (208.47,523.38) and (110.36,425.27) .. (110.36,304.24) -- cycle ;
\draw [color={rgb, 255:red, 245; green, 166; blue, 35 }  ,draw opacity=1 ][line width=1.5]    (176,459.46) -- (49.01,558.48) ;
\draw [color={rgb, 255:red, 245; green, 166; blue, 35 }  ,draw opacity=1 ][line width=1.5]    (87,328.46) -- (-6.26,398.84) ;
\draw [color={rgb, 255:red, 245; green, 166; blue, 35 }  ,draw opacity=1 ][line width=1.5]    (386.01,6.02) -- (386.01,90.45) ;
\draw [color={rgb, 255:red, 245; green, 166; blue, 35 }  ,draw opacity=1 ][line width=1.5]    (442.01,6.02) -- (442.01,83.45) ;
\draw [color={rgb, 255:red, 245; green, 166; blue, 35 }  ,draw opacity=1 ][line width=1.5]    (211.01,3.02) -- (211.01,83.45) ;
\draw  [draw opacity=0][dash pattern={on 2.53pt off 3.02pt}][line width=2.25]  (209.44,406.66) .. controls (207.54,421.59) and (197.11,440.01) .. (180.3,455.65) .. controls (178.88,456.97) and (177.45,458.24) .. (176,459.46) -- (156.59,430.15) -- cycle ; \draw  [dash pattern={on 2.53pt off 3.02pt}][line width=2.25]  (209.44,406.66) .. controls (207.54,421.59) and (197.11,440.01) .. (180.3,455.65) .. controls (178.88,456.97) and (177.45,458.24) .. (176,459.46) ;  
\draw  [draw opacity=0][dash pattern={on 2.53pt off 3.02pt}][line width=2.25]  (120,369.46) .. controls (132.55,361.16) and (153.31,357.04) .. (176.14,359.57) .. controls (178.06,359.79) and (179.96,360.04) .. (181.83,360.34) -- (172.29,394.18) -- cycle ; \draw  [dash pattern={on 2.53pt off 3.02pt}][line width=2.25]  (120,369.46) .. controls (132.55,361.16) and (153.31,357.04) .. (176.14,359.57) .. controls (178.06,359.79) and (179.96,360.04) .. (181.83,360.34) ;  
\draw  [draw opacity=0][dash pattern={on 2.53pt off 3.02pt}][line width=2.25]  (477.4,355.99) .. controls (491.91,351.98) and (512.92,354.5) .. (533.84,363.98) .. controls (535.6,364.78) and (537.33,365.62) .. (539.01,366.48) -- (519.46,395.69) -- cycle ; \draw  [dash pattern={on 2.53pt off 3.02pt}][line width=2.25]  (477.4,355.99) .. controls (491.91,351.98) and (512.92,354.5) .. (533.84,363.98) .. controls (535.6,364.78) and (537.33,365.62) .. (539.01,366.48) ;  
\draw  [draw opacity=0][dash pattern={on 2.53pt off 3.02pt}][line width=2.25]  (480.97,458.43) .. controls (468.52,449.98) and (456.83,432.33) .. (450.51,410.25) .. controls (449.98,408.39) and (449.49,406.54) .. (449.06,404.69) -- (483.98,400.67) -- cycle ; \draw  [dash pattern={on 2.53pt off 3.02pt}][line width=2.25]  (480.97,458.43) .. controls (468.52,449.98) and (456.83,432.33) .. (450.51,410.25) .. controls (449.98,408.39) and (449.49,406.54) .. (449.06,404.69) ;  
\draw  [draw opacity=0][dash pattern={on 2.53pt off 3.02pt}][line width=2.25]  (386.01,90.45) .. controls (386.19,105.5) and (378.39,125.18) .. (363.89,142.98) .. controls (362.67,144.48) and (361.42,145.94) .. (360.16,147.35) -- (336.9,120.99) -- cycle ; \draw  [dash pattern={on 2.53pt off 3.02pt}][line width=2.25]  (386.01,90.45) .. controls (386.19,105.5) and (378.39,125.18) .. (363.89,142.98) .. controls (362.67,144.48) and (361.42,145.94) .. (360.16,147.35) ;  
\draw  [draw opacity=0][dash pattern={on 2.53pt off 3.02pt}][line width=2.25]  (299.49,147.22) .. controls (287.49,138.14) and (276.72,119.92) .. (271.54,97.54) .. controls (271.11,95.66) and (270.72,93.78) .. (270.38,91.92) -- (305.46,89.69) -- cycle ; \draw  [dash pattern={on 2.53pt off 3.02pt}][line width=2.25]  (299.49,147.22) .. controls (287.49,138.14) and (276.72,119.92) .. (271.54,97.54) .. controls (271.11,95.66) and (270.72,93.78) .. (270.38,91.92) ;  
\draw [line width=2.25]  [dash pattern={on 2.53pt off 3.02pt}]  (210,397.62) .. controls (234.01,422.2) and (262.28,430.2) .. (312.01,424.2) ;
\draw [line width=2.25]  [dash pattern={on 2.53pt off 3.02pt}]  (421.01,224.93) .. controls (423.01,214.93) and (400.01,167.93) .. (340.49,158.13) ;
\draw [line width=2.25]  [dash pattern={on 2.53pt off 3.02pt}]  (317.01,158.93) .. controls (277.01,160.93) and (239.01,201.93) .. (239.01,224.93) ;
\draw [line width=2.25]  [dash pattern={on 2.53pt off 3.02pt}]  (359,423.62) .. controls (391.01,429.2) and (441.01,406.2) .. (453.01,379.2) ;
\draw [line width=2.25]  [dash pattern={on 2.53pt off 3.02pt}]  (460,366.62) .. controls (471.01,326.2) and (460.01,304.2) .. (449.01,280.2) ;
\draw  [color={rgb, 255:red, 43; green, 85; blue, 234 }  ,draw opacity=1 ][fill={rgb, 255:red, 43; green, 85; blue, 234 }  ,fill opacity=1 ] (132.33,178.97) -- (160.94,161.46) -- (157.79,194.85) -- (153,174.19) -- cycle ;
\draw  [color={rgb, 255:red, 43; green, 85; blue, 234 }  ,draw opacity=1 ][fill={rgb, 255:red, 43; green, 85; blue, 234 }  ,fill opacity=1 ] (503.46,148.12) -- (510.3,180.95) -- (479.93,166.73) -- (501,169.19) -- cycle ;
\draw  [color={rgb, 255:red, 43; green, 85; blue, 234 }  ,draw opacity=1 ][fill={rgb, 255:red, 43; green, 85; blue, 234 }  ,fill opacity=1 ] (343.64,538.54) -- (314,522.83) -- (344.35,508.55) -- (329,523.19) -- cycle ;
\draw  [color={rgb, 255:red, 66; green, 188; blue, 31 }  ,draw opacity=1 ][fill={rgb, 255:red, 66; green, 188; blue, 31 }  ,fill opacity=1 ] (337.01,198.2) -- (316,184.43) -- (335.8,168.98) -- (326.2,184.01) -- cycle ;
\draw  [color={rgb, 255:red, 66; green, 188; blue, 31 }  ,draw opacity=1 ][line width=1.5]  (208.98,304.24) .. controls (208.98,237.69) and (262.94,183.73) .. (329.49,183.73) .. controls (396.05,183.73) and (450,237.69) .. (450,304.24) .. controls (450,370.8) and (396.05,424.75) .. (329.49,424.75) .. controls (262.94,424.75) and (208.98,370.8) .. (208.98,304.24) -- cycle ;
\draw  [color={rgb, 255:red, 245; green, 166; blue, 35 }  ,draw opacity=1 ][fill={rgb, 255:red, 245; green, 166; blue, 35 }  ,fill opacity=1 ] (48.86,343.04) -- (70.14,341.22) -- (62.87,361.3) -- (63,346.7) -- cycle ;
\draw  [color={rgb, 255:red, 245; green, 166; blue, 35 }  ,draw opacity=1 ][fill={rgb, 255:red, 245; green, 166; blue, 35 }  ,fill opacity=1 ] (83.86,384.04) -- (105.14,382.22) -- (97.87,402.3) -- (98,387.7) -- cycle ;
\draw  [color={rgb, 255:red, 245; green, 166; blue, 35 }  ,draw opacity=1 ][fill={rgb, 255:red, 245; green, 166; blue, 35 }  ,fill opacity=1 ] (199.6,62.83) -- (210.9,44.7) -- (222.62,62.56) -- (211,53.7) -- cycle ;
\draw  [color={rgb, 255:red, 245; green, 166; blue, 35 }  ,draw opacity=1 ][fill={rgb, 255:red, 245; green, 166; blue, 35 }  ,fill opacity=1 ] (258.6,61.83) -- (269.9,43.7) -- (281.62,61.56) -- (270,52.7) -- cycle ;
\draw  [color={rgb, 255:red, 245; green, 166; blue, 35 }  ,draw opacity=1 ][fill={rgb, 255:red, 245; green, 166; blue, 35 }  ,fill opacity=1 ] (397.42,40.58) -- (386.1,58.69) -- (374.4,40.82) -- (386,49.7) -- cycle ;
\draw  [color={rgb, 255:red, 245; green, 166; blue, 35 }  ,draw opacity=1 ][fill={rgb, 255:red, 245; green, 166; blue, 35 }  ,fill opacity=1 ] (452.42,40.58) -- (441.1,58.69) -- (429.4,40.82) -- (441,49.7) -- cycle ;
\draw  [color={rgb, 255:red, 245; green, 166; blue, 35 }  ,draw opacity=1 ][fill={rgb, 255:red, 245; green, 166; blue, 35 }  ,fill opacity=1 ] (590.94,324.09) -- (598.12,344.21) -- (576.84,342.28) -- (591,338.7) -- cycle ;
\draw  [color={rgb, 255:red, 245; green, 166; blue, 35 }  ,draw opacity=1 ][fill={rgb, 255:red, 245; green, 166; blue, 35 }  ,fill opacity=1 ] (567.94,374.09) -- (575.12,394.21) -- (553.84,392.28) -- (568,388.7) -- cycle ;
\draw  [color={rgb, 255:red, 245; green, 166; blue, 35 }  ,draw opacity=1 ][fill={rgb, 255:red, 245; green, 166; blue, 35 }  ,fill opacity=1 ] (520.97,505.3) -- (513.93,485.14) -- (535.19,487.21) -- (521,490.7) -- cycle ;
\draw  [color={rgb, 255:red, 245; green, 166; blue, 35 }  ,draw opacity=1 ][fill={rgb, 255:red, 245; green, 166; blue, 35 }  ,fill opacity=1 ] (482.97,545.3) -- (475.93,525.14) -- (497.19,527.21) -- (483,530.7) -- cycle ;
\draw  [color={rgb, 255:red, 245; green, 166; blue, 35 }  ,draw opacity=1 ][fill={rgb, 255:red, 245; green, 166; blue, 35 }  ,fill opacity=1 ] (179.22,536.04) -- (157.99,538.33) -- (164.82,518.09) -- (165,532.7) -- cycle ;
\draw  [color={rgb, 255:red, 245; green, 166; blue, 35 }  ,draw opacity=1 ][fill={rgb, 255:red, 245; green, 166; blue, 35 }  ,fill opacity=1 ] (145.22,497.04) -- (123.99,499.33) -- (130.82,479.09) -- (131,493.7) -- cycle ;
\draw  [color={rgb, 255:red, 189; green, 16; blue, 224 }  ,draw opacity=1 ][line width=1.5]  (134.98,397.62) .. controls (134.98,376.9) and (151.78,360.11) .. (172.49,360.11) .. controls (193.21,360.11) and (210,376.9) .. (210,397.62) .. controls (210,418.33) and (193.21,435.13) .. (172.49,435.13) .. controls (151.78,435.13) and (134.98,418.33) .. (134.98,397.62) -- cycle ;
\draw  [color={rgb, 255:red, 189; green, 16; blue, 224 }  ,draw opacity=1 ][line width=1.5]  (448.98,395.62) .. controls (448.98,374.9) and (465.78,358.11) .. (486.49,358.11) .. controls (507.21,358.11) and (524,374.9) .. (524,395.62) .. controls (524,416.33) and (507.21,433.13) .. (486.49,433.13) .. controls (465.78,433.13) and (448.98,416.33) .. (448.98,395.62) -- cycle ;
\draw  [color={rgb, 255:red, 189; green, 16; blue, 224 }  ,draw opacity=1 ][line width=1.5]  (290.98,122.62) .. controls (290.98,101.9) and (307.78,85.11) .. (328.49,85.11) .. controls (349.21,85.11) and (366,101.9) .. (366,122.62) .. controls (366,143.33) and (349.21,160.13) .. (328.49,160.13) .. controls (307.78,160.13) and (290.98,143.33) .. (290.98,122.62) -- cycle ;
\draw  [color={rgb, 255:red, 189; green, 16; blue, 224 }  ,draw opacity=1 ][fill={rgb, 255:red, 189; green, 16; blue, 224 }  ,fill opacity=1 ] (204.83,361.7) -- (206.79,381.86) -- (187.6,375.39) -- (201.5,375.2) -- cycle ;
\draw  [color={rgb, 255:red, 189; green, 16; blue, 224 }  ,draw opacity=1 ][fill={rgb, 255:red, 189; green, 16; blue, 224 }  ,fill opacity=1 ] (139.83,433.61) -- (138.39,413.41) -- (157.41,420.38) -- (143.5,420.2) -- cycle ;
\draw  [color={rgb, 255:red, 189; green, 16; blue, 224 }  ,draw opacity=1 ][fill={rgb, 255:red, 189; green, 16; blue, 224 }  ,fill opacity=1 ] (317.57,74.55) -- (335,84.87) -- (318.44,96.53) -- (326.5,85.2) -- cycle ;
\draw  [color={rgb, 255:red, 189; green, 16; blue, 224 }  ,draw opacity=1 ][fill={rgb, 255:red, 189; green, 16; blue, 224 }  ,fill opacity=1 ] (339.75,169.59) -- (322.02,159.79) -- (338.22,147.64) -- (330.5,159.2) -- cycle ;
\draw  [color={rgb, 255:red, 66; green, 188; blue, 31 }  ,draw opacity=1 ][fill={rgb, 255:red, 66; green, 188; blue, 31 }  ,fill opacity=1 ] (319.4,410.82) -- (340.4,424.59) -- (320.6,440.04) -- (330.2,425.01) -- cycle ;
\draw  [color={rgb, 255:red, 189; green, 16; blue, 224 }  ,draw opacity=1 ][fill={rgb, 255:red, 189; green, 16; blue, 224 }  ,fill opacity=1 ] (439.71,379.96) -- (457.82,370.88) -- (458.67,391.11) -- (453.5,378.2) -- cycle ;
\draw  [color={rgb, 255:red, 189; green, 16; blue, 224 }  ,draw opacity=1 ][fill={rgb, 255:red, 189; green, 16; blue, 224 }  ,fill opacity=1 ] (530.21,414.86) -- (512.51,424.71) -- (510.78,404.53) -- (516.5,417.2) -- cycle ;
\draw  [fill={rgb, 255:red, 0; green, 0; blue, 0 }  ,fill opacity=1 ] (129.3,352.57) -- (145.91,360.97) -- (132.91,374.29) -- (138.5,362.2) -- cycle ;
\draw  [fill={rgb, 255:red, 0; green, 0; blue, 0 }  ,fill opacity=1 ] (204.8,446.03) -- (186.9,451.12) -- (187.42,432.51) -- (191.5,445.2) -- cycle ;
\draw  [fill={rgb, 255:red, 0; green, 0; blue, 0 }  ,fill opacity=1 ] (203.39,319.17) -- (190.54,332.64) -- (181.55,316.34) -- (191.5,325.2) -- cycle ;
\draw  [fill={rgb, 255:red, 0; green, 0; blue, 0 }  ,fill opacity=1 ] (235.07,404.32) -- (245.58,419.68) -- (227.78,425.1) -- (238.5,417.2) -- cycle ;
\draw  [fill={rgb, 255:red, 0; green, 0; blue, 0 }  ,fill opacity=1 ] (270.4,125.04) -- (272.93,106.6) -- (289.76,114.55) -- (276.5,113.2) -- cycle ;
\draw  [fill={rgb, 255:red, 0; green, 0; blue, 0 }  ,fill opacity=1 ] (392.68,116.22) -- (376.25,124.96) -- (372.84,106.66) -- (379.5,118.2) -- cycle ;
\draw  [fill={rgb, 255:red, 0; green, 0; blue, 0 }  ,fill opacity=1 ] (376.66,185.49) -- (371.58,167.58) -- (390.19,168.12) -- (377.5,172.2) -- cycle ;
\draw  [fill={rgb, 255:red, 0; green, 0; blue, 0 }  ,fill opacity=1 ] (287.55,176.88) -- (269.12,179.42) -- (272.23,161.07) -- (274.5,174.2) -- cycle ;
\draw  [fill={rgb, 255:red, 0; green, 0; blue, 0 }  ,fill opacity=1 ] (407.9,403.65) -- (426.23,406.87) -- (417.65,423.39) -- (419.5,410.2) -- cycle ;
\draw  [fill={rgb, 255:red, 0; green, 0; blue, 0 }  ,fill opacity=1 ] (455.39,335.91) -- (462.87,318.87) -- (476.88,331.13) -- (464.5,326.2) -- cycle ;
\draw  [fill={rgb, 255:red, 0; green, 0; blue, 0 }  ,fill opacity=1 ] (519,344.96) -- (527.14,361.7) -- (508.73,364.43) -- (520.5,358.2) -- cycle ;
\draw  [fill={rgb, 255:red, 0; green, 0; blue, 0 }  ,fill opacity=1 ] (459.91,455.29) -- (461.65,436.76) -- (478.8,443.98) -- (465.5,443.2) -- cycle ;

\end{tikzpicture} & \hspace{0.75cm}

\includegraphics[width=0.55\linewidth]{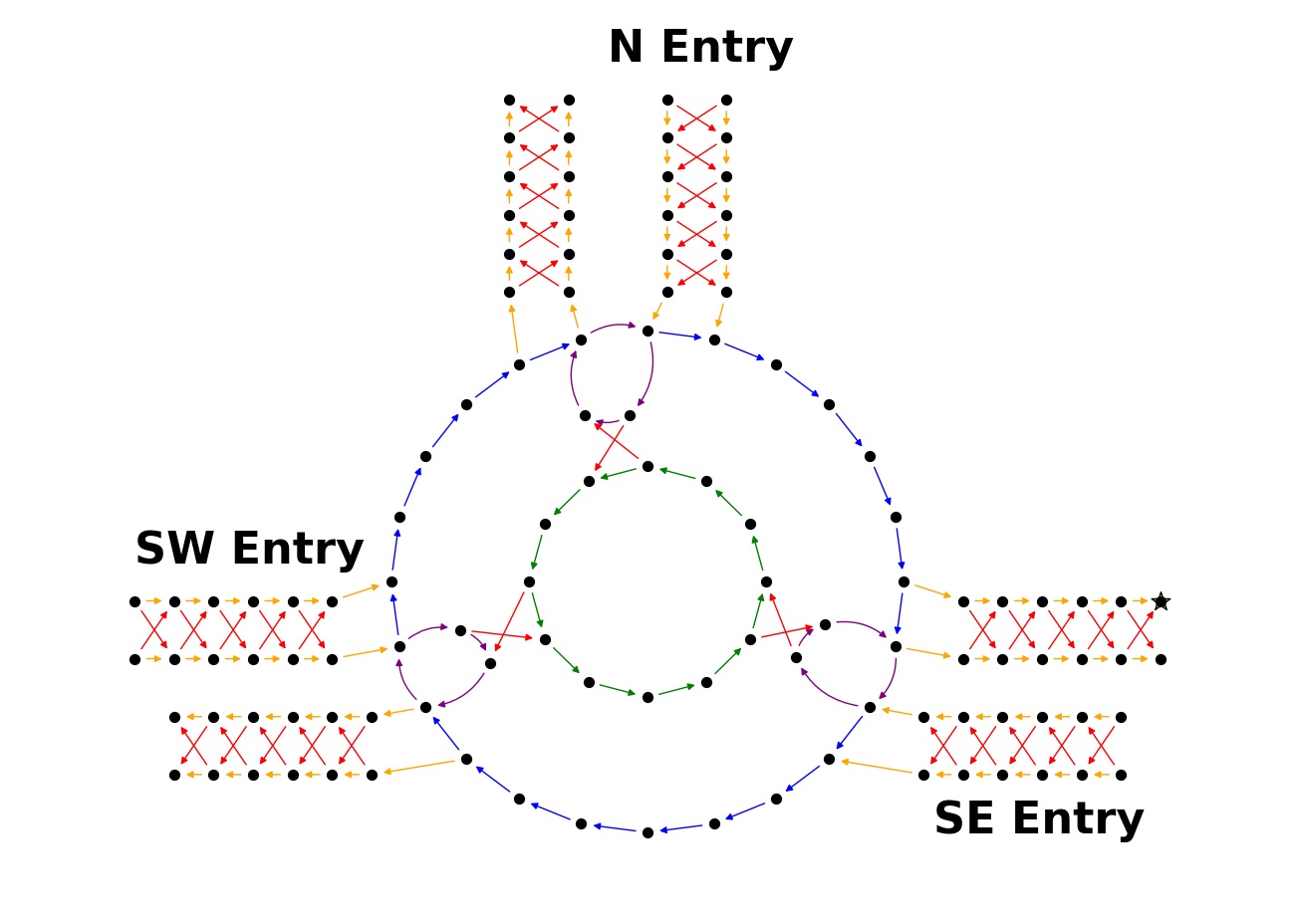}\\[3pt]
\mbox{\footnotesize (a)} & \mbox{\footnotesize(b)}
\end{array}
$
\caption{A magic roundabout example.  Panel (a): Schematic of roundabout network with five connected traffic circles. Arrows indicate the direction of traffic flow and dashed edges indicate available transitions between roundabouts. We assume that the vehicle drives on the left-hand side of the road as is customary in the United Kingdom. Panel (b): Lane-level road network representation of the roundabout network with all possible transitions from each node shown. 
The arrows show available node-to-node transitions, with red arrows indicating stochastic lane changes while all other transitions are deterministic.
The destination $\bt$ is along the southeastern exit and identified by the black star.\label{fig:roundabout_diagram}}
\end{figure}

We consider one such example based on a network (shown in Figure  \ref{fig:roundabout_diagram}) of five interconnected roundabouts 
 -- one outer (blue road segments, moving clockwise), one inner (green, moving counterclockwise), and three miniature roundabouts (purple) which allow vehicles to switch between the outer and the inner roundabouts. To avoid terminological confusion, we will further refer to all of them as ``rings,'' reserving the term ``Roundabout'' for their interconnected combination.
We assume there are three roads running into/out of this Roundabout and that the target is in the left lane of the southeastern exit, identified with a black star in
Figure \ref{fig:roundabout_diagram}(b).
For a driver approaching the Roundabout, the main strategic decision is whether it is worth changing lanes before the entry point and, if the answer is yes, how the urgency of LSMs should vary as we get closer.  (Entering from the left lane leads to a clockwise trajectory; entering on the right is worthwhile if you want to transition to the inner roundabout and travel counterclockwise at least at first.)  The basic tradeoff is usually between following the most direct route to $\bt$ and limiting exposure to heavy congestion.

As in the previous example, wherever a lane change is possible (as indicated by red arrows in Figure \ref{fig:roundabout_diagram}(b)), 
the vehicle has access to a complete urgency spectrum of LSMs, but here our transition cost is quadratic in $p$:
\begin{equation}\label{eqn:cost_fn_roundabout}
	K(p) = \beta(\x)p^2 + \gamma(\x) + [f]
\end{equation}
where $\beta(\x), \gamma(\x) > 0 $ reflect the congestion present within the road segment (e.g., entryways, outer ring, inner ring, mini-ring) at $\x.$ The positive constant $f$ is
only added at the entrance-nodes from each of the roads (into the mini and outer rings respectively). 
These constants are chosen to reflect any waiting that the vehicle must endure as a result of current traffic conditions before being able to enter the roundabout. 
The causality condition established in Theorem \ref{theorem:gen_caus_condition} is satisfied when 
$\beta(\x) \leq \gamma(\x) +  [f]$ at all $\x \in X.$ 
Our goal is to highlight the effect of traffic congestion on the outer and inner rings; so, for the sake of simplicity,
we assume that $\beta(\x) = \gamma(\x) = 1$ on all two-lane roads leading to/from the Roundabout and on the three purple mini-rings.  
The resulting $K(p)$ is shown in Figure \ref{fig:ex3}(a).
For the Roundabout entry nodes, we also set $f = 5$ in all experiments.

For all deterministic transitions (shown by orange, blue, purple, and green arcs), we assume the stay-in-lane cost $K = \gamma(\x)$ based on the local level of congestion.
We focus on its effect on the optimal policy for the cars approaching along the northern and southwestern roads.
In the first experiment, the inner ring is much more congested ($K = \gamma(\x) = 6.8$ on green arcs) than the outer ring ($K = 3$ on blue arcs).
As shown in Figure \ref{fig:ex3}(b), it is optimal for the approaching cars to enter the Roundabout from the left lane and travel along the outer ring. 
Those cars that approach the Roundabout from the Southwest in the right lane will attempt LSMs to the left with an increasing urgency as they get closer.
In the second experiment, the situation is essentially reversed with $K = \gamma(\x) = 3$ on the inner ring and
$K =
5.2$ on the outer. 
As shown in Figure \ref{fig:ex3}(c), the approaching cars now have a clear preference to enter through the right lane;
this is largely true even for cars coming from the North, for which the counter-clockwise path (through the inner ring) is preferred despite being longer.

\begin{figure*}[t]
\centering
$
\arraycolsep=1pt\def\arraystretch{0.1}
\begin{array}{ccc}
\includegraphics[width = 0.33\textwidth]{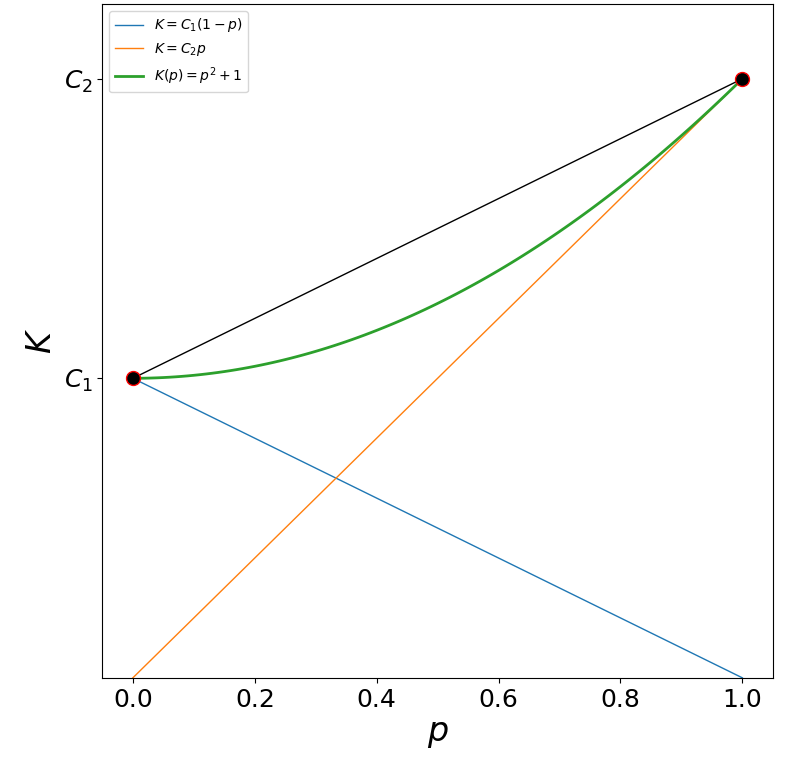} &
\includegraphics[width = 0.33\textwidth]{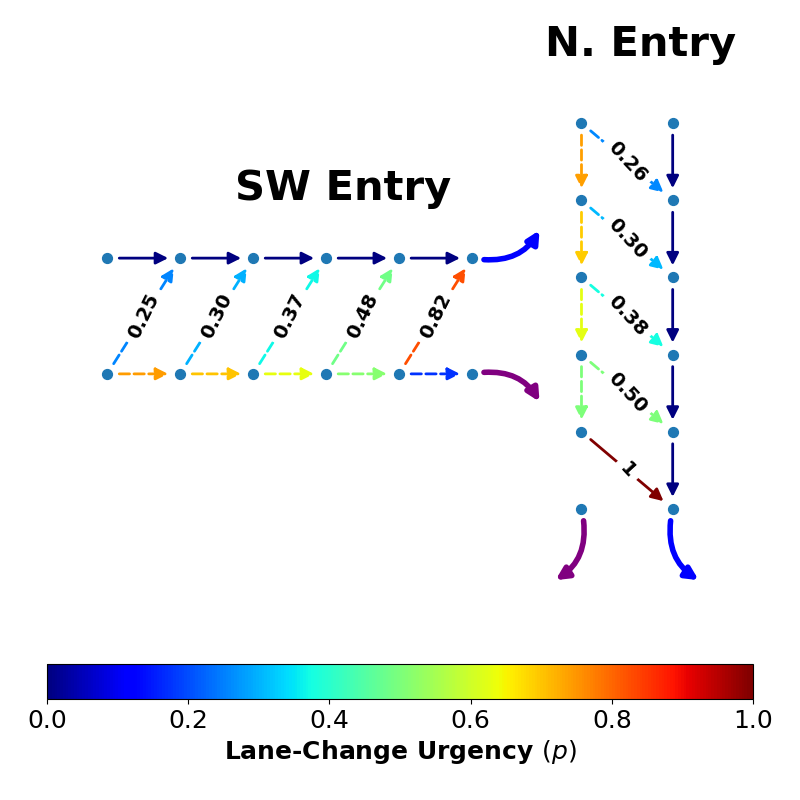} &
\includegraphics[width = 0.33\textwidth]{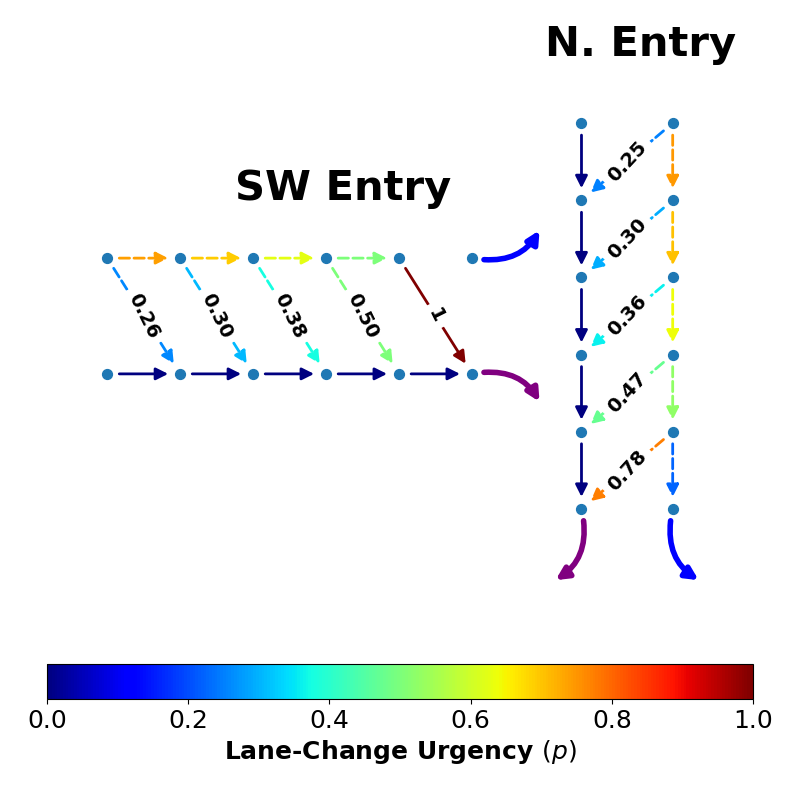} \\[-2pt]
\mbox{\footnotesize(a)} & \mbox{\footnotesize(b)} & \mbox{\footnotesize(c)}
\end{array}
$
\caption{Optimal lane-change policy when approaching a magic roundabout.  Panel (a): $K(p)$ along the two-lane entryways to the roundabout network. Here, $\beta(\x) = \gamma(\x) = 1$, and $K(p)$ is only monotone causal by Theorem \ref{theorem:gen_caus_condition}. Panels (b) and (c): optimal STPs at the southwestern and northern entryways when the inner ring has the highest congestion (panel (b)), and the outer ring has the highest congestion (panel (c)). Thick blue arrows indicate a direct entry to the outer ring while thick purple arrows indicate a direct entry into a mini ring. In both cases, vehicles tackle the tradeoff between taking a more direct route to $\bt$ and the amount of congestion they are willing to encounter. \label{fig:ex3}}
\end{figure*}
%
%
 \section{CONCLUSIONS}
We have introduced a class of Opportunistically Stochastic Shortest Path (OSSP) problems and proved several sufficient ``monotone causality'' conditions 
to guarantee the applicability of efficient label-setting methods. The approach has important applications both in discrete and continuous optimization.
For example, given an anisotropic inhomogeneous speed function $f$ for motion in a continuous domain, the deterministic time-to-target minimized over all feasible paths can be found as a viscosity solution of the corresponding stationary Hamilton-Jacobi-Bellman PDE.  A first-order accurate semi-Lagrangian discretization of that PDE can be re-interpreted as an OSSP, and our MC criteria can be then used to check which discretization stencils are compatible with Dijkstra's method (in 2D and 3D) and with Dial's method (in 2D only).
Importantly, the conditions we developed are expressed in terms of simple geometric properties of the anisotropic speed profile $\Vf.$
However, our current analysis does not provide any guidance for finding the best MC-causal stencil for each  $\Vf$ (since causal properties need to be balanced against a possible increase in local truncation errors).  Restricted versions of this problem are considered in  
\cite{mirebeau2014anisotropic, mirebeau2014efficient, desquilbet2021single}, but many aspects still remain open for general speed profiles.  This balancing act is even more delicate with Dial's method, where increasing the angular resolution of a stencil decreases that stencil's spatial locality but allows using a larger bin width $\Delta.$

In the discrete setting, we have demonstrated the usefulness of OSSPs in optimizing the lane change level routing of autonomous vehicles.  Extending the prior work \cite{jones2022lane}, we showed that Dijkstra's and Dial's methods are applicable to a much broader class of vehicle routing models that include multiple intermediate urgency levels (or even a continuous urgency spectrum) of lane change maneuvers and a variety of cost functions.   We note that the same approach is also useful in a semi-autonomous context; 
our Strategic/Tactical Plan could be also used by human drivers and 
the recommended lane change urgency levels could be communicated (e.g., as indicator bars of varying length or color) through assistive navigation hardware or software.

OSSP models capture the inherent uncertainty of lane change maneuvers, but if the traffic conditions significantly change from those used to formulate the problem, the entire optimal policy has to be recomputed on the fly. 
The usability of label-setting methods makes such occasional online replanning possible. 
But both Dijkstra's and Dial's method compute the optimal policy for reaching the specific target $\bm{t}$ from {\em each} starting node of the road network.
However, most of that network is probably irrelevant for a vehicle starting at a specific location $\bm{s}$.
In deterministic path planning, such single source / single target problems are often solved by an even more efficient A* method \cite{hart1968formal}, 
which uses a ``consistent heuristic'' to restrict the computations to a smaller (implicitly defined) neighborhood of the optimal path from $\bm{s}$ to $\bm{t}.$
While such a consistent heuristic is unavailable for general SSPs, 
it can be constructed in OSSPs with positive $\underline{C} = \min\limits_{\x_i \in X} \min\limits_{\ba \in A(\x_i)} C(\x_i, \ba).$
This is essentially the approach explored in \cite{jones2023lanelevel}.  
But since realistic road networks include short lane segments, this $\underline{C}$ is usually quite small, making the consistent heuristic very conservative
and yielding little computational savings.
We believe that a more promising approach is to explore the use of inconsistent heuristics (ideally, with a bound on suboptimality of resulting routing policies).
It might be possible to use the ``asymptotically causal'' domain restriction techniques developed for discretizations of HJB equations \cite{clawson2014causal},
with the true optimality of trajectories recovered only in the limit (under the grid refinement).

Another interesting direction for future work is to find criteria for applicability of label-setting to ``stochastic shortest path games,'' in which 
the probability distribution over the successor nodes at each stage depends on the actions chosen by two antagonistic players \cite{patek1999stochastic}.
We note that some results on the applicability of Dijkstra's method to {\em deterministic} games on graphs have been developed in the last ten years 
\cite{bardi2016dijkstra, bertsekas2019robust}, but the case of general stochastic games on graphs remains
open.\label{section:conclusions}

\section*{Acknowledgments}
The authors are grateful to Jur van den Berg for
sparking their interest in lane-change-level vehicle routing problems.
The authors also thank Jean-Marie Mirebeau for a helpful discussion of references \cite{mirebeau2014anisotropic, mirebeau2014efficient, desquilbet2021single}
and for pointing out the connections which led to
our 
Remark \ref{rem:another_mirebeau_connection}
and
Proposition \ref{prop:3D_tangent_cond}.
This work was supported in part by the first author's NDSEG
Fellowship,
by the NSF DMS (awards 1645643 and 2111522),
and by the AFOSR (award FA9550-22-1-0528).

\bibliographystyle{informs2014} 
\bibliography{bibl} 


\end{document}